\newtheorem{mytheo}{Theorem}[section]
\newtheorem{mydef}[mytheo]{Definition}
\newtheorem{cor}[mytheo]{Corollary}
\newtheorem{defn}[mytheo]{Definition}
\newtheorem{rem}[mytheo]{Remark}
\newcounter{remark}
\newcounter{problem}
\newenvironment{problem}{\refstepcounter{problem}\vspace{1.5ex}
{\noindent\bf Problem
\theproblem.}\hspace{0.3em}\parindent=0pt}{\vspace{1ex}}
\def\@upcite#1#2{\textsuperscript{[{#1\if@tempswa , #2\fi}]}}
\newenvironment{proof}{\vspace{1ex}
{\it Proof. }\hspace{0.3em}}{\vspace{1ex}} \journal{}
\begin{document}
\begin{frontmatter}
\title{Cost-reduction implicit exponential Runge--Kutta methods for  highly oscillatory systems}

\author[author1]{Xianfa Hu}
\ead{zzxyhxf@163.com}

\author[author1]{Wansheng Wang}
\ead{w.s.wang@163.com}

\author[author2]{Bin Wang}
\ead{wangbinmaths@xjtu.edu.cn}

\author[author3]{Yonglei Fang\corref{cor1}}
\ead{ylfangmath@163.com}

\cortext[cor1]{Corresponding author}
\address[author1]{Department of Mathematics, Shanghai Normal University, Shanghai 200234, P.R.China}

\address[author2]{School of Mathematics and Statistics, Xi'an Jiaotong University, Xi'an 710049, Shannxi,
P.R.China}

\address[author3]{School of Mathematics and Statistics, Zaozhuang University,  Zaozhuang 277160, P.R. China}


\begin{abstract}
In this paper, two novel classes  of implicit exponential Runge--Kutta  (ERK) methods are studied for solving highly oscillatory  systems.  First of all,  we analyze the symplectic conditions of two kinds of  exponential integrators, and present a first-order symplectic  method.  In order to  solve highly oscillatory problems,  the highly accurate implicit  ERK integrators (up to order four) are formulated by comparing the Taylor expansions of numerical and exact solutions, it is shown that  the  order conditions of two new kinds of  exponential methods are identical to the order conditions of  classical Runge--Kutta (RK) methods. Moreover, we  investigate  the linear stability properties of  these exponential methods. Finally, numerical results not only present the long time  energy preservation of the first-order symplectic method, but also illustrate the  accuracy and efficiency of these formulated methods in comparison with standard ERK methods.
 \end{abstract}

\begin{keyword}
Implicit exponential Runge--Kutta methods; Symplectic conditions;  Order conditions;  Linear stability analysis; Highly oscillatory systems
\end{keyword}
\end{frontmatter}
\vskip0.5cm \noindent Mathematics Subject Classification (2000):
65L05, 65L06 \vskip0.5cm
\pagestyle{myheadings} \thispagestyle{plain} \markboth{}
{\centerline{\small  X. Hu, W. Wang, B. Wang  and Y. Fang}}
\section{Introduction}
It is well known that classical Runge--Kutta methods have a wide range of applications in scientific computing. Especially, the symplectic methods can preserve the symplecticity of the original systems, and the symplectic or symmetric methods provide long time energy preservation  applied to a Hamiltonian system.
Symplectic algorithms for Hamiltonian systems
appeared in 1980s, and the earliest significant contribution to this
field were due to Feng Kang (see \cite{Feng1985,Feng1986}). It is
worth noting the earlier important work on symplectic integration
by J. M. Sanz-Serna, who first found and analyzed symplectic Runge-Kutta
schemes for Hamiltonian systems (see  \cite{Sanz-Serna1988BIT}).
Symplectic exponential Runge--Kutta methods for solving Hamiltonian systems
were proposed by Mei et al. \cite{Mei2017}, and  shown better performance than classical symplectic
Runge--Kutta schemes. However, the coefficients of these exponential integrators
are strongly dependent on  the computing or approximating the product of a matrix exponential function
with a vector. As a consequence, we try to design two  novel classes of  implicit exponential Runge--Kutta methods with  lower computational cost.

In this work, we consider  the first-order  initial value
problem
\begin{equation}\label{equ1}\left\{
\begin{array}{l}
y'(t)+My(t)=f(y(t)),\quad t\in[t_0,t_{\rm
end}],\cr\noalign{\vskip1truemm} y(t_0)=y_0,
\end{array}
\right.\end{equation} {where} the matrix  $M\in \mathbb R^{m\times
m}$ is  symmetric positive definite or skew-Hermitian with
eigenvalues of large modulus. Problems of the form (\ref{equ1})
arise frequently in a variety of applied science  such as quantum mechanics,
flexible mechanics, and electrodynamics. Some highly oscillatory
 problems (see, e.g. \cite{Hochbruck2010,Wang2017}), Schr\"{o}dinger equations (see, e.g. \cite{Brugnano2018,Celledoni2008,Wang2022}) and KdV equations (see, e.g. \cite{Wang2012})
 can be converted into (\ref{equ1}) with appropriate spatial discretizations.  The exact solution of
problem (\ref{equ1}) can be  represented by the Volterra integral formula
\begin{equation}\label{Volterra formula}
y(t_0+h)=e^{-hM}y(t_0)+h\int_0^1 e^{-(1-\tau)hM}f(y(t_0+h\tau))d\tau.
\end{equation}
It is a challenge for effectively solving  the problem (\ref{equ1}) once it has the stiffness matrix $M$. In formula (\ref{Volterra formula}),  $e^{-hM}$ is generally the matrix exponential function, and exponential integrators can exactly integrate the linear equation $y^{\prime}(t)+My(t)=0$, which indicates that exponential integrators have unique advantages for solving stiff or highly oscillatory problems than non-exponential integrators. Exponential integrators have been received more attention \cite{Fang2021,Hochbruck1997,Hochbruck1998,Hochbruck2005a,Hochbruck2005b,Hochbruck2010,Krogstad 2005,Lawson1967,Lawson1972,Wu2016,Mei2017,Wang2019}. It is also worth mentioning that extended Runge-Kutta-Nystr\"{o}m (ERKN) methods \cite{Fang2010a,Fang2010b,Wang2015,Wu2010,Wu2013,Wu2015,Wu2018}, as the exponential integrators, which were formulated for effectively solving second-order oscillatory systems.

On the other hand, (\ref{equ1}) frequently possesses some important geometrical or physical properties. When
$f(y)=J^{-1}\nabla U(y)$ and $-M=J^{-1}Q$, with the skew-symmetric
\begin{equation*}
J=\left(
\begin{array}{cc}
0&I\\
-I&0\\
\end{array}
\right),
\end{equation*}
where $U(y)$ is a smooth potential function, $Q$ is a symmetric matrix and $I$ is the identity matrix, the problem (\ref{equ1}) can be converted into a Hamiltonian system. Owing to this, our study starts by deriving the symplectic conditions of these methods. It is noted that standard exponential Runge-Kutta (ERK) methods based on the stiff-order conditions (comprise the classical order conditions) \cite{Hochbruck1998,Hochbruck2005a,Hochbruck2005b,Hochbruck2010}.  However, as claimed by Berland et al. in \cite{Berland2005}, the stiff order conditions are relatively strict.   In this paper, the coefficients of implicit exponential methods are real constant, therefore our study is related to the classical order, not satisfy the stiff order conditions.  We have presented that two new kinds of explicit ERK methods up to order four  reduce to  classical Runge-Kutta (RK) methods once $M\rightarrow \mathbf{0}$ \cite{Hu2022,Wang2022}. In what follows, we will study the implicit ERK methods.

The paper is organized as follows. In Section $\ref{sec2}$, we  investigate the symplectic conditions for the simplified version of ERK  (SVERK) and  modified version of ERK  (MVERK) methods respectively, and present  a first-order symplectic method. In Section $\ref{sec3}$,  the order conditions of implicit SVERK  and  MVERK  methods are derived, which are identical to the classical  order conditions  of RK methods, and  we present the implicit second-order  methods with one stage and  fourth-order  methods with two stages, respectively. Section $\ref{sec4}$ is devoted to the linear stability regions of implicit SVERK and MVERK methods.  In Section $\ref{sec5}$,   numerical experiments are carried out to show the structure-preserving property of the symplectic method and  present  the comparable  accuracy and  efficiency  of  these implicit ERK  methods. The last section is concerned with concluding remarks.

\section{The symplectic conditions for two new classes of  ERK methods}\label{sec2}

In our previous work, we have formulated the modified and simplified versions of explicit ERK methods for solving stiff or highly oscillatory problems, and presented the convergence of explicit exponential methods. Meanwhile, it has been pointed out that
the internal stages and update of  SVERK methods preserve some properties of matrix-valued functions, and
MVERK methods inherit the internal stages and modify the update of classical RK methods,
but their coefficients are independent of the entire functions $\varphi_k(-hM)$ \eqref{entire functions} of standard exponential integrators \cite{Hochbruck2010}.

\begin{mydef}\label{definition1} (\cite{Wang2022})
An $s$-stage SVERK  method for the numerical integration
(\ref{equ1}) is defined as
\begin{equation}\label{SVERK}
\left\{
\begin{array}{l}
\displaystyle Y_i=e^{-c_ihM}y_0+h\sum\limits_{j=1}^sa_{ij}f(Y_j), \quad
i=1,\ldots,s,\cr\noalign{\vskip4truemm}
\displaystyle y_{1}=e^{-hM}y_0+h\sum\limits_{i=1}^sb_{i}f(Y_i)+w_s(z),
\end{array}
\right.
\end{equation}
{where} $a_{ij} $, $b_i$
 are real constants for {$i,j=1,\ldots, s$},
 $Y_i\approx y(t_0+c_ih)$ for $i=1,\ldots,s$,  $w_s(z)$  depends on $h$, $M$,  and $w_s(z)\rightarrow 0$ when $M\rightarrow \mathbf{0}$. \end{mydef}

\begin{mydef}\label{definition2}(\cite{Wang2022})
An $s$-stage MVERK method for the numerical integration
(\ref{equ1}) is defined as
\begin{equation}\label{MVERK}
\left\{
\begin{array}{l}
\displaystyle \bar{Y}_i=y_0+h\sum\limits_{j=1}^s\bar{a}_{ij}(-M\bar{Y}_j+f(\bar{Y}_j)), \quad
i=1,\ldots,s,\cr\noalign{\vskip4truemm}
\displaystyle \bar{y}_{1}=e^{-hM}y_0+h\sum\limits_{i=1}^s\bar{b}_{i}f(\bar{Y}_i)+\bar {w}_s(z),
\end{array}
\right.
\end{equation}
{where} $\bar{a}_{ij}$, $\bar{b}_i$
 are real constants for ${i,j=1,\ldots, s}$, $\bar{Y}_i\approx y(t_0+\bar{c}_ih) $ for $i=1,\ldots,s$, $\bar{w}_s(z)$ is related to $h$ and $M$, and $\bar{w}_s(z)\rightarrow0$ once $M\rightarrow \mathbf{0}$.
\end{mydef}
The $w_s(z)$ and $\bar{w}_s(z)$  also depend on the
term $f(\cdot)$ and initial value $y_0$ once we consider the order of SVERK and  MVERK methods which satisfies $p\geq2$. If we consider the first-order methods, then  $w_s(z)=0$   and $\bar{w}_s(z)=0$.
It should be noted that  the SVERK or MVERK methods with the same order share the same $w_s(z)$ or $\bar{w}_s(z)$, and  $w_s(z)$ is different from $\bar{w}_s(z)$ when $p\geq3$ in \cite{Hu2022,Wang2022}.  It is clear  that SVERK and MVERK methods reduce to classical RK methods when $M\rightarrow\mathbf{0}$, and these methods  exactly integrate the first-order homogeneous linear system
\begin{equation}\label{homogeneous}
y'(t)=-My(t), \quad y(0)=y_0,
\end{equation}
with the exact solution
 $$y(t)=e^{-tM}y_0.$$

 The  SVERK method (\ref{SVERK}) can be displayed by the following Butcher Tableau
\begin{equation}\label{Tableau-SVERK}
\begin{aligned} &\quad\quad\begin{tabular}{c|c|c}
 ${c}$&$\mathbf{e^{-chM}}$&${A}$ \\
 \hline
  $\raisebox{-1.3ex}[1.0pt]{$e^{-hM}$}$ & $\raisebox{-1.3ex}[1.0pt]{$w_s(z)$}$&$\raisebox{-1.3ex}[1.0pt]{${b}^{\intercal}$}$  \\
\end{tabular}
~=
\begin{tabular}{c|c|ccc}
 ${c}_1$&$e^{-c_1hM}$&${a}_{11}$&$\cdots$&${a}_{1s}$\\
$\vdots$& $\vdots$ & $\vdots$&$\vdots$&$\vdots$\\
 ${c}_s$ &$e^{-c_shM}$&  ${a}_{s1}$& $\cdots$& ${a}_{ss}$\\
 \hline
 $\raisebox{-1.3ex}[1.0pt]{$e^{-hM}$}$&$\raisebox{-1.3ex}[1.0pt]{$w_s(z)$}$&$\raisebox{-1.3ex}[1.0pt]{${b}_1$}$&\raisebox{-1.3ex}[1.0pt]{$\cdots$} &  $\raisebox{-1.3ex}[1.0pt]{${b}_s$}$\\
\end{tabular}.
\end{aligned}
\end{equation}
where $c_i=\sum\limits_{j=1}^sa_{ij}$  for $i=1,\ldots,s$. Similarly, the MVERK method (\ref{MVERK})  also can be expressed in the Butcher tableau
\begin{equation}\label{Tableau-MVERK}
\begin{aligned} &\quad\quad\begin{tabular}{c|c|c}
 $\bar{c}$&$\mathbf{I}$&$\bar{A}$ \\
 \hline
  $\raisebox{-1.3ex}[1.0pt]{$e^{-hM}$}$ & $\raisebox{-1.3ex}[1.0pt]{$\bar{w}_s(z)$}$&$\raisebox{-1.3ex}[1.0pt]{${b}^{\intercal}$}$  \\
\end{tabular}
~=
\begin{tabular}{c|c|ccc}
 $\bar{c}_1$&$I$&$\bar{a}_{11}$&$\cdots$&$\bar{a}_{1s}$\\
$\vdots$& $\vdots$ & $\vdots$&$\vdots$&$\vdots$\\
 $\bar{c}_s$ &$I$&  $\bar{a}_{s1}$& $\cdots$& $\bar{a}_{ss}$\\
 \hline
 $\raisebox{-1.3ex}[1.0pt]{$e^{-hM}$}$&$\raisebox{-1.3ex}[1.0pt]{$\bar{w}_s(z)$}$&$\raisebox{-1.3ex}[1.0pt]{$\bar{b}_1$}$&\raisebox{-1.3ex}[1.0pt]{$\cdots$} &  $\raisebox{-1.3ex}[1.0pt]{$\bar{b}_s$}$\\
\end{tabular}.
\end{aligned}
\end{equation}
with $\bar{c}_i=\sum\limits_{j=1}^s \bar{a}_{ij}$  for $i=1,\ldots,s$.

It is true that  (\ref{equ1}) becomes a Hamiltonian system when $f(y)=J^{-1}\nabla U(y)$ and $M=-J^{-1}Q$, where $U(y)$ is a smooth potential function and $Q$ is a symmetric matrix. Thus, we consider the following Hamiltonian system
\begin{equation}\label{Hamiltonian}\left\{
\begin{array}{l}
y'(t)-J^{-1}Qy(t)=J^{-1}\nabla U(y),\quad t\in[t_0,t_{\rm
end}],\cr\noalign{\vskip1truemm}
y(t_0)=y_0.
\end{array}
\right.\end{equation}
Under the assumptions of $w_s(z)=0$ and $\bar{w}_s(z)=0$, we will analyze the symplectic conditions for SVERK and MVERK methods. In fact, $w_s(z)=0$ and $\bar{w}_s(z)=0$  mean that the order  of   SVERK and MVERK methods satisfies  $p\leq1$. Since $w_s(z)\neq 0$ and $\bar{w}_s(z)\neq 0$,
 the symplectic conditions of the SVERK  and MVERK methods  can be discussed in the same way, however, the $w_s(z)$
  and $\bar{w}_s(z)$ will break the  formal conservation of the symplectic invariant.

\begin{mytheo}\label{theorem1}
If the coefficients of an $s$-stage SVERK method with $w_s(z)=0$, which satisfy the following conditions:
\begin{equation}\label{symplectic conditions}\left\{
\begin{array}{l}
e^{-(1-c_k)hM}G^{-1}_{k}=G^{-1}_{k}(e^{-(1-c_k)hM})^{\intercal}, \quad  k=1,\ldots,s,\cr\noalign{\vskip1truemm}
b_{k}b_{l}=b_{k}a_{kl}Je^{-(1-c_k)hM}J^{-1}+b_{l}a_{lk}(e^{-(1-c_k)hM})^{\intercal}, \quad k,l=1,\ldots,s,
\end{array}
\right.\end{equation}
where  $G^{-1}_k=\nabla^2U(y_n+h\sum\limits_{l=1}^s a_{kl}\xi_l)$ and  $\xi_k =f(y_n+h\sum\limits_{l=1}^s a_{kl}\xi_l)$, then the SVERK method is symplectic.
\end{mytheo}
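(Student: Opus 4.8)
The plan is to show that the one-step map $y_0\mapsto y_1$ defined by \eqref{SVERK} with $w_s(z)=0$ preserves the symplectic two-form attached to $J$, i.e. that $\mathrm{d}y_1\wedge J\,\mathrm{d}y_1=\mathrm{d}y_0\wedge J\,\mathrm{d}y_0$, where for vectors of differentials $u,v$ I write $u\wedge J\,v:=\sum_{i,j}J_{ij}\,u_i\wedge v_j$. Because $J$ is skew-symmetric this pairing is symmetric in its two slots and vanishes whenever its coefficient matrix is symmetric; these two elementary facts, together with the transpose rule $(Au)\wedge J\,(Bv)=u\wedge(A^\intercal J B)\,v$ for constant matrices $A,B$, are the only formal properties of the wedge I will use. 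The first preparatory step is to record that the exponential of the linear part is itself symplectic: since $-M=J^{-1}Q$ satisfies $(J^{-1}Q)^\intercal J+J(J^{-1}Q)=Q^\intercal(J^{-1})^\intercal J+Q=-Q+Q=0$ (using $Q^\intercal=Q$ and $(J^{-1})^\intercal J=-I$), the generator $-M$ is infinitesimally symplectic, and integrating the corresponding matrix differential equation gives $(e^{-\theta hM})^\intercal J\,e^{-\theta hM}=J$ for every $\theta$. In particular $e^{-hM}$ and each $e^{-(1-c_k)hM}$ are symplectic matrices.

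Next I would differentiate the method. Writing $\xi_k=f(Y_k)=J^{-1}\nabla U(Y_k)$ and $G_k^{-1}=\nabla^2 U(Y_k)$, the chain rule gives $\mathrm{d}\xi_k=J^{-1}G_k^{-1}\,\mathrm{d}Y_k$, while \eqref{SVERK} yields $\mathrm{d}Y_k=e^{-c_khM}\mathrm{d}y_0+h\sum_l a_{kl}\,\mathrm{d}\xi_l$ and $\mathrm{d}y_1=e^{-hM}\mathrm{d}y_0+h\sum_k b_k\,\mathrm{d}\xi_k$. Expanding $\mathrm{d}y_1\wedge J\,\mathrm{d}y_1$ bilinearly produces three groups: a pure term $(e^{-hM}\mathrm{d}y_0)\wedge J\,(e^{-hM}\mathrm{d}y_0)$, which equals $\mathrm{d}y_0\wedge J\,\mathrm{d}y_0$ by the symplecticity of $e^{-hM}$; two cross terms that coincide (the pairing being symmetric) and hence combine into $2h\sum_k b_k\,(e^{-hM}\mathrm{d}y_0)\wedge J\,\mathrm{d}\xi_k$; and the quadratic term $h^2\sum_{k,l}b_kb_l\,\mathrm{d}\xi_k\wedge J\,\mathrm{d}\xi_l$. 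It therefore remains to show that the cross and quadratic terms cancel.

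The decisive algebraic move is to eliminate $\mathrm{d}y_0$ from the cross terms by multiplying the $k$-th stage relation by $e^{-(1-c_k)hM}$, which gives $e^{-hM}\mathrm{d}y_0=e^{-(1-c_k)hM}\mathrm{d}Y_k-h\sum_l a_{kl}\,e^{-(1-c_k)hM}\mathrm{d}\xi_l$. Substituting this splits the cross terms into a diagonal part $2h\sum_k b_k\,(e^{-(1-c_k)hM}\mathrm{d}Y_k)\wedge J\,\mathrm{d}\xi_k$ and a further quadratic part $-2h^2\sum_{k,l}b_k a_{kl}\,(e^{-(1-c_k)hM}\mathrm{d}\xi_l)\wedge J\,\mathrm{d}\xi_k$. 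Using $\mathrm{d}\xi_k=J^{-1}G_k^{-1}\mathrm{d}Y_k$ and the transpose rule, each diagonal term reduces to $\mathrm{d}Y_k\wedge\,(e^{-(1-c_k)hM}{}^\intercal G_k^{-1})\,\mathrm{d}Y_k$, which vanishes exactly when $e^{-(1-c_k)hM}G_k^{-1}$ is symmetric, i.e. the first relation in \eqref{symplectic conditions}. Collecting the two quadratic groups, rewriting every $(\cdot)\wedge J(\cdot)$ in the common ordering $\mathrm{d}\xi_k\wedge J\,\mathrm{d}\xi_l$, pairing the $(k,l)$ and $(l,k)$ indices, and invoking $(e^{-(1-c_k)hM})^\intercal J\,e^{-(1-c_k)hM}=J$ to simplify, the total quadratic form is seen to vanish precisely when the $b_kb_l$-relation of \eqref{symplectic conditions} holds for all $k,l$. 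Both conditions together then force $\mathrm{d}y_1\wedge J\,\mathrm{d}y_1=\mathrm{d}y_0\wedge J\,\mathrm{d}y_0$, which is symplecticity.

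The part I expect to require the most care is this last bookkeeping of the quadratic terms. Unlike the classical Runge--Kutta case, where the diagonal pairing $\mathrm{d}Y_k\wedge J\,\mathrm{d}\xi_k$ is automatically zero because $f'$ is $J^{-1}$ times a symmetric Hessian, here the extra non-symmetric factor $e^{-(1-c_k)hM}$ does not commute with $J$ or with $G_k^{-1}$, so the diagonal term no longer vanishes on its own and one genuinely needs the first matrix condition. Tracking the transposes introduced by each exponential and invoking the symplecticity identity $(e^{-(1-c_k)hM})^\intercal J\,e^{-(1-c_k)hM}=J$ at the right moment is what converts the raw cancellation requirement into the conjugated form $J e^{-(1-c_k)hM}J^{-1}$ appearing in \eqref{symplectic conditions}; getting these transpose placements right is the only real obstacle, the remainder being the standard differential-form symplecticity computation.
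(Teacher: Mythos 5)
Your proof is correct and is essentially the paper's own argument transcribed into the language of differential two-forms: the same expansion of the update's symplectic form, the same elimination of the $\mathrm{d}y_0$-dependence by multiplying the differentiated stage relation by $e^{-(1-c_k)hM}$, and the same matching of diagonal and off-diagonal blocks against the two stated conditions. Two small differences favor your write-up. First, you actually prove that $e^{-\theta hM}$ is symplectic via the infinitesimally symplectic generator $-M=J^{-1}Q$; the paper silently replaces $(e^{-hM}\Psi_n)^{\intercal}J(e^{-hM}\Psi_n)$ by $\Psi_n^{\intercal}J\Psi_n$ without justification. Second, by keeping $\mathrm{d}Y_k$ in the diagonal term and using $\mathrm{d}\xi_k=J^{-1}G_k^{-1}\mathrm{d}Y_k$ only at the end, you avoid solving the stage relation for $\Psi_n$, which is the step that forces the paper to assume the Hessians are nonsingular.

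One transpose deserves explicit attention. Your diagonal term $\mathrm{d}Y_k\wedge\big((e^{-(1-c_k)hM})^{\intercal}G_k^{-1}\big)\mathrm{d}Y_k$ vanishes exactly when $(e^{-(1-c_k)hM})^{\intercal}G_k^{-1}$ is symmetric, not when $e^{-(1-c_k)hM}G_k^{-1}$ is symmetric as you assert; with $G_k^{-1}=\nabla^2U(Y_k)$ as you (and the theorem statement) define it, these differ in general, since for symplectic $A$ and symmetric $B$ the condition ``$A^{\intercal}B$ symmetric'' reads $A^{\intercal}=BAB^{-1}$ while ``$AB$ symmetric'' reads $A^{\intercal}=B^{-1}AB$. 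The two readings coincide precisely when $G_k^{-1}$ is interpreted as the \emph{inverse} of the Hessian $G_k=\nabla^2U(Y_k)$, which is exactly what the paper's own proof does (its statement and proof disagree on this notation), so your argument establishes the theorem in the same reading that the paper's proof requires; but the transpose should be tracked rather than elided. The same careful bookkeeping in the off-diagonal block produces $(e^{-(1-c_l)hM})^{\intercal}$ (index $l$) on the $b_la_{lk}$ term, whereas the displayed condition carries $c_k$ — a further discrepancy inherited from the paper, not introduced by you.
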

\begin{proof}  A numerical method is
said to be symplectic if the numerical solution $y_{n+1}$ satisfies $(\frac{\partial y_{n+1}}{\partial y_0})^{\intercal}J(\frac{\partial y_{n+1}}{\partial y_0})$ $=(\frac{\partial y_{n}}{\partial y_0})^{\intercal}J(\frac{\partial y_{n}}{\partial y_0})$.  Under the assumption $w_s(z)=0$, we rewrite the SVERK method as
\begin{equation}\label{SVERK-form2}
\left\{
\begin{array}{l}
\displaystyle \xi_{k}=f\big(e^{-c_khM}y_n+h\sum\limits_{l=1}^sa_{kl}\xi_l\big), \
i=1,\ldots,s,\cr\noalign{\vskip4truemm}
\displaystyle y_{n+1}=e^{-hM}y_n+h\sum\limits_{k=1}^sb_{k}\xi_{k}.
\end{array}
\right.
\end{equation}
Letting $\Xi_k=\frac{\partial\xi_k}{\partial y_0}$, $\Psi_{n+1}=\frac{\partial y_{n+1}}{\partial y_0}$, and $G_k=\nabla^2U\big(e^{-c_khM}y_n+h\sum\limits_{l=1}^sa_{kl}\xi_l\big)$ for $ k=1, \ldots, s$. Moreover, assuming the symmetric matrices $G_1, \ldots, G_s$ are nonsingular, we apply (\ref{SVERK-form2})
 to a Hamiltonian system (\ref{Hamiltonian}), the derivative of $y_{n+1}$ with respect to $y_0$ is
 \begin{equation}
 \Psi_{n+1}=e^{-hM}\Psi_n+h\sum\limits_{k=1}^s b_k\Xi_k.
 \end{equation}
  It is easy to see that
  \begin{equation}\label{updates symplectic of SVERK}
  \begin{aligned}
 \Psi_{n+1}^{\intercal}J \Psi_{n+1}&=\Big(e^{-hM}\Psi_n+h\sum\limits_{k=1}^s b_k\Xi_k\Big)^{\intercal}J\Big(e^{-hM}\Psi_n+h\sum\limits_{k=1}^s b_k\Xi_k\Big)\cr\noalign{\vskip4truemm}
 &=(e^{-hM}\Psi_n)^{\intercal}J(e^{-hM}\Psi_n)+h\sum\limits_{k=1}^sb_k(e^{-hM}\Psi_n)^{\intercal}J\Xi_k+h\sum\limits_{k=1}^s
 b_k\Xi_{k}^{\intercal}Je^{-hM}\Psi_n\cr\noalign{\vskip4truemm}
 &\quad +h^2\sum\limits_{k=1}^s\sum\limits_{l=1}^sb_kb_l\Xi_{k}^{\intercal}J\Xi_l.
 \end{aligned}
 \end{equation}
 It follows from (\ref{SVERK-form2}) that
 \begin{equation}
 \Xi_k=J^{-1}G_k\big(e^{-c_khM}\Psi_n+h\sum\limits_{l=1}^s a_{kl}\Xi_l\big), \quad k=1, \ldots, s,
 \end{equation}
 thus
  \begin{equation}\label{interal representation}
 \Psi_n=e^{c_khM}G_{k}^{-1}J\Xi_k-h\sum\limits_{l=1}^s a_{kl}e^{c_khM}\Xi_l, \quad k=1, \ldots, s.
 \end{equation}
In view of  (\ref{interal representation}), we have
\begin{equation}\label{formula1}
\begin{aligned}
h\sum\limits_{k=1}^sb_k(e^{-hM}\Psi_n)^{\intercal}J\Xi_k&=h\sum\limits_{l=1}^s b_l\big(e^{c_lhM}G_l^{-1}J\Xi_l-h\sum\limits_{k=1}^s a_{lk}e^{c_lhM}\Xi_k\big)^{\intercal}(e^{-hM})^{\intercal}J\Xi_l\cr\noalign{\vskip4truemm}
&=h\sum\limits_{k=1}^s b_k\Xi_{k}^{\intercal} J^{\intercal} G_{k}^{-1} (e^{-(1-c_k)hM})^{\intercal}J\Xi_{k}-
h^2\sum\limits_{k=1}^s \sum\limits_{l=1}^s b_la_{lk}\Xi_{k}^{\intercal}(e^{-(1-c_k)hM})^{\intercal}J\Xi_{l},
\end{aligned}
\end{equation}
and
\begin{equation}\label{formula2}
\begin{aligned}
h\sum\limits_{k=1}^sb_k\Xi_{k}^{\intercal} Je^{-hM}\Psi_n&=h\sum\limits_{k=1}^s b_k\Xi_k^{\intercal}Je^{-hM}\big(e^{c_khM}G_{k}^{-1}J\Xi_k-h\sum\limits_{l=1}^s a_{kl}e^{c_khM}\Xi_l\big)\cr\noalign{\vskip4truemm}
&=h\sum\limits_{k=1}^s b_k\Xi_{k}^{\intercal} J e^{-(1-c_k)hM} G_{k}^{-1}J\Xi_{k}-
h^2\sum\limits_{k=1}^s \sum\limits_{l=1}^s b_ka_{kl}\Xi_{k}^{\intercal}Je^{-(1-c_k)hM}\Xi_{l}.
\end{aligned}
\end{equation}
 Inserting  (\ref{formula1}) and (\ref{formula2}) into (\ref{updates symplectic of SVERK}) yields
 \begin{equation*}
 \begin{aligned}
 \Psi_{n+1}^{\intercal}J \Psi_{n+1}=&\Psi_{n}^{\intercal} J\Psi_{n}+h\sum\limits_{k=1}^s b_k\Xi_{k}^{\intercal}
 \Big(J^{\intercal}G_{k}^{-1}(e^{-(1-c_k)hM})^{\intercal}+J e^{-(1-c_k)hM} G_{k}^{-1}\Big)J\Xi_{k}\cr\noalign{\vskip4truemm}
&
-h^2\sum\limits_{k=1}^s \sum\limits_{l=1}^s b_la_{lk}\Xi_{k}^{\intercal}(e^{-(1-c_k)hM})^{\intercal}J\Xi_{l}-
h^2\sum\limits_{k=1}^s \sum\limits_{l=1}^s b_ka_{kl}\Xi_{k}^{\intercal}Je^{-(1-c_k)hM}\Xi_{l}
\cr\noalign{\vskip4truemm}
&
+h^2\sum\limits_{k=1}^s\sum\limits_{l=1}^sb_kb_l\Xi_{k}^{\intercal}J\Xi_l
\cr\noalign{\vskip4truemm}
=&\Psi_{n}^{\intercal} J\Psi_{n}+h\sum\limits_{k=1}^s b_k\Xi_{k}^{\intercal}J
 \Big(  e^{-(1-c_k)hM} G_{k}^{-1}-G_{k}^{-1}(e^{-(1-c_k)hM})^{\intercal}\Big)J\Xi_{k}\\
&+h^2\sum\limits_{k=1}^s \sum\limits_{l=1}^s \Xi_{k}^{\intercal} \big(b_kb_l-b_ka_{kl}Je^{-(1-c_k)hM}J^{-1}-b_la_{lk}(e^{-(1-c_k)hM})^{\intercal}\big)J\Xi_l.
 \end{aligned}
 \end{equation*}
 As the coefficients of the method satisfy the conditions (\ref{symplectic conditions}), a direct calculation leads to
 $$\Psi_{n+1}J\Psi_{n+1}=\Psi_{n}J\Psi_{n}.$$
 Therefore the SVERK method is symplectic. The proof is completed.
 $\hfill \square$
\end{proof}

\begin{rem}
  It can be observed that  the symplectic conditions of the SVERK method with $w_s(z)=0$ reduce to the  symplectic conditions of classical RK methods once $M\rightarrow \mathbf{0}$.  A  choice is that  $b_1=1$, $c_1=1$ and $a_{11}=1/2$, we can obtain  the first-order symplectic SVERK method
  \begin{equation}\label{symplectic SVERK1-1}
\left\{
\begin{array}{l}
\displaystyle Y_1=e^{-hM}y_n+\frac{h}{2}f(Y_1), \cr\noalign{\vskip4truemm}
\displaystyle y_{n+1}=e^{-hM}y_n+hf(Y_1).
\end{array}
\right.
\end{equation}
For \eqref{equ1}, the method \eqref{symplectic SVERK1-1} reduces to the implicit midpoint method when $M\rightarrow \mathbf{0}$. Unfortunately, there is no existing the symplectic SVERK method  with order  $p\geq2$ due to $w_s(z)\neq0$.
\end{rem}
   The next theorem will present  the symplectic conditions of the  MVERK method with $\bar{w}_s(z)=0$.
\begin{mytheo}\label{symplectic MVERK}
Assume the coefficients of an $s$-stage MVERK method with $\bar{w}_s(z)=0$, which satisfy the following conditions:
\begin{equation}\label{symplectic conditions of MVERK}\left\{
\begin{array}{l}
D_i^{\intercal}Je^{-hM}+(e^{-hM})^{\intercal}JD_i=0, \ i=1,\ldots, s,\cr\noalign{\vskip1truemm}
\bar{b}_i\bar{b}_j=\bar{b}_i\bar{a}_{ij}Je^{-hM}J^{-1}+\bar{b}_j\bar{a}_{ji}e^{-hM}, \ i,j=1,\ldots,s,\cr\noalign{\vskip1truemm}
\bar{b}_j\bar{a}_{ji}M^{\intercal}(e^{-hM})^{\intercal}JD_j+\bar{b}_i\bar{a}_{ij}D_i^{\intercal}Je^{-hM}M=0,\ i,j=1,\ldots,s,
\end{array}
\right.\end{equation}
with $D_i=\frac{\partial f(\bar{Y}_i)}{\partial y_0}$, then the MVERK method is symplectic.
\end{mytheo}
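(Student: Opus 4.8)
The plan is to run the same variational (first-variation) argument used for Theorem \ref{theorem1}, now carrying the extra $-M\bar{Y}_j$ terms that distinguish the MVERK internal stages \eqref{MVERK} from the SVERK ones. Since $\bar{w}_s(z)=0$, I first specialize the method to the Hamiltonian system \eqref{Hamiltonian}, written over one step as $\bar{Y}_i=y_n+h\sum_j\bar{a}_{ij}(-M\bar{Y}_j+f(\bar{Y}_j))$ and $y_{n+1}=e^{-hM}y_n+h\sum_i\bar{b}_i f(\bar{Y}_i)$, and differentiate with respect to $y_0$. Setting $\Psi_n=\partial y_n/\partial y_0$, $\bar{\Psi}_i=\partial\bar{Y}_i/\partial y_0$ and $D_i=\partial f(\bar{Y}_i)/\partial y_0$, this produces the variational update $\Psi_{n+1}=e^{-hM}\Psi_n+h\sum_i\bar{b}_i D_i$ together with the variational stage relations $\bar{\Psi}_i=\Psi_n+h\sum_j\bar{a}_{ij}(-M\bar{\Psi}_j+D_j)$. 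Because $f=J^{-1}\nabla U$, one has the Hessian identity $D_i=J^{-1}\bar{G}_i\bar{\Psi}_i$ with $\bar{G}_i=\nabla^2U(\bar{Y}_i)$ symmetric, which plays the role that the $G_k$-relation played in the SVERK proof.

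Next I would expand
\[
\Psi_{n+1}^{\intercal}J\Psi_{n+1}=(e^{-hM}\Psi_n)^{\intercal}J(e^{-hM}\Psi_n)+h\sum_i\bar{b}_i\big[(e^{-hM}\Psi_n)^{\intercal}JD_i+D_i^{\intercal}Je^{-hM}\Psi_n\big]+h^2\sum_{i,j}\bar{b}_i\bar{b}_jD_i^{\intercal}JD_j .
\]
The leading term is disposed of by the Hamiltonian structure itself: since $-M=J^{-1}Q$ with $Q$ symmetric, $-M$ lies in the symplectic Lie algebra and $e^{-hM}$ is a symplectic matrix, i.e. $(e^{-hM})^{\intercal}Je^{-hM}=J$, whence $(e^{-hM}\Psi_n)^{\intercal}J(e^{-hM}\Psi_n)=\Psi_n^{\intercal}J\Psi_n$. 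It then remains to show that the two cross terms and the quadratic term cancel under \eqref{symplectic conditions of MVERK}.

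Solving the stage relation for $\Psi_n$ (using the $i$th stage so that its index matches $D_i$) gives the internal representation $\Psi_n=\bar{\Psi}_i+h\sum_j\bar{a}_{ij}M\bar{\Psi}_j-h\sum_j\bar{a}_{ij}D_j$. Substituting it into both cross terms splits them, as in Theorem \ref{theorem1}, into three families: a single-sum ``diagonal'' part in $\bar{\Psi}_i,D_i$; a $D_i$--$D_j$ double sum; and a genuinely new double sum carrying the factor $M\bar{\Psi}_j$ coming from the $-M\bar{Y}_j$ stage term. Using $D_i=J^{-1}\bar{G}_i\bar{\Psi}_i$ and $J^{\intercal}=-J$, I would verify that the diagonal part is governed by the first condition of \eqref{symplectic conditions of MVERK}; that the $D_i$--$D_j$ double sum, combined with $h^2\sum_{i,j}\bar{b}_i\bar{b}_jD_i^{\intercal}JD_j$, collapses (after right multiplication by $J^{-1}$) to the second, $\bar{b}_i\bar{b}_j$-condition; and that the $M$-weighted double sum is removed by the third condition. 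With all three families eliminated one obtains $\Psi_{n+1}^{\intercal}J\Psi_{n+1}=\Psi_n^{\intercal}J\Psi_n$, which is the symplecticity identity.

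The main obstacle is the $M$-coupled family
\[
h^2\sum_{i,j}\bar{b}_i\bar{a}_{ij}\big[\bar{\Psi}_j^{\intercal}M^{\intercal}(e^{-hM})^{\intercal}JD_i+D_i^{\intercal}Je^{-hM}M\bar{\Psi}_j\big],
\]
which has no counterpart in Theorem \ref{theorem1}: one must relabel so as to pair the $(i,j)$ and $(j,i)$ contributions and keep precise track of every transpose and of the $J$-conjugation $J(\cdot)J^{-1}$, in order to match it term-by-term with the third condition. This bookkeeping, together with confirming that the first condition indeed clears the diagonal once $\bar{\Psi}_i$ is re-expressed through $\bar{G}_i$ and $D_i$, is where essentially all the delicacy lies; the $D_i$--$D_j$ part and the leading term are then formally identical to the SVERK computation.
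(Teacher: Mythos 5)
Your proposal follows essentially the same route as the paper's proof: differentiate the scheme applied to \eqref{Hamiltonian} to get the variational stage and update relations, expand $\Psi_{n+1}^{\intercal}J\Psi_{n+1}$, solve the stage relation for $\Psi_n$ and substitute into the cross terms, and match the resulting three families (diagonal, $D$--$D$, and $M$-weighted) to the three conditions in \eqref{symplectic conditions of MVERK}. The only cosmetic differences are that you make explicit the symplecticity of $e^{-hM}$ (which the paper uses silently to reduce the leading term to $\Psi_n^{\intercal}J\Psi_n$) and you work with $D_i=\partial f(\bar{Y}_i)/\partial y_0$ as in the theorem statement rather than the Jacobian-times-$X_i$ factorization used in the paper's proof.
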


\begin{proof}
Setting $D_i=\frac{\partial f(\bar{Y}_i)}{\partial y}$, $X_i=\frac{\partial \bar{Y}_i}{\partial y_0}$, and $\Psi_{n+1}=\frac{\partial \bar{y}_{n+1}}{y_0}$. Similarly, we apply the MVERK method    \eqref{MVERK}  with $\bar{w}_s(z)=0$ to a Hamiltonian system (\ref{Hamiltonian}),  the derivative of this scheme with respect to $y_0$ is
\begin{equation}\label{the derivative of MVERK}\left\{
\begin{array}{l}
X_i=\frac{\partial \bar{Y}_i}{\partial y_0}=\Psi_{n}+h\sum\limits_{j=1}^s \bar{a}_{ij}(-MX_j+D_jX_j),\cr\noalign{\vskip1truemm}
\Psi_{n+1}=e^{-hM}\Psi_n+h\sum\limits_{i=1}^s \bar{b}_iD_iX_i.
\end{array}
\right.\end{equation}
Then, we have
  \begin{equation}\label{updates symplectic}
  \begin{aligned}
 \Psi_{n+1}^{\intercal}J \Psi_{n+1}&=\Big(e^{-hM}\Psi_n+h\sum\limits_{i=1}^s \bar{b}_iD_iX_i\Big)^{\intercal}J\Big(e^{-hM}\Psi_n+h\sum\limits_{i=1}^s \bar{b}_iD_iX_i\Big)\\
 &=(e^{-hM}\Psi_n)^{\intercal}J(e^{-hM}\Psi_n)+h\sum\limits_{i=1}^s\bar{b}_i(e^{-hM}\Psi_n)^{\intercal}JD_iX_i+h\sum\limits_{i=1}^s
 \bar{b}_i(D_iX_i)^{\intercal}Je^{-hM}\Psi_n\\
  &\quad +h^2\sum\limits_{i=1}^s\sum\limits_{j=1}^s\bar{b}_i\bar{b}_j(D_iX_i)^{\intercal}JD_jX_j.
  \end{aligned}
 \end{equation}
 Using the first formula of (\ref{the derivative of MVERK}), we obtain
 \begin{equation}
 \begin{aligned}
X_i^{\intercal}(e^{-hM})^{\intercal} JD_iX_i&=\Psi_n^{\intercal}(e^{-hM})^{\intercal} JD_iX_i+h\sum\limits_{j=1}^s
\bar{a}_{ij}(-MX_j+D_jX_j)^{\intercal}(e^{-hM})^{\intercal}JD_iX_i,\cr\noalign{\vskip4truemm}
(D_iX_i)^{\intercal} Je^{-hM}X_i&=(D_iX_i)^{\intercal} Je^{-hM}\Psi_n+h\sum\limits_{j=1}^s \bar{a}_{ij}(D_iX_i)^{\intercal} Je^{-hM}(-MX_j+D_jX_j),
\end{aligned}
 \end{equation}
 thus
  \begin{equation}\label{MVERK formula1}
 \begin{aligned}
\Psi_n^{\intercal}(e^{-hM})^{\intercal} JD_iX_i&=X_i^{\intercal}(e^{-hM})^{\intercal} JD_iX_i-h\sum\limits_{j=1}^s
\bar{a}_{ij}(-MX_j+D_jX_j)^{\intercal}(e^{-hM})^{\intercal}JD_iX_i,\cr\noalign{\vskip4truemm}
(D_iX_i)^{\intercal} Je^{-hM}\Psi_n&=(D_iX_i)^{\intercal} Je^{-hM}X_i-h\sum\limits_{j=1}^s \bar{a}_{ij}(D_iX_i)^{\intercal} Je^{-hM}(-MX_j+D_jX_j).
\end{aligned}
 \end{equation}
Inserting (\ref{MVERK formula1}) into (\ref{updates symplectic}) leads to
 \begin{equation*}
 \begin{aligned}
 \Psi_{n+1}^{\intercal}J \Psi_{n+1}=&\Psi_{n}^{\intercal} J\Psi_{n}+h\sum\limits_{i=1}^s \bar{b}_iX_i^{\intercal}
 (e^{-hM})^{\intercal}JD_iX_i-h^2\sum\limits_{i=1}^s \sum\limits_{j=1}^s \bar{b}_j\bar{a}_{ji}(-MX_i+D_iX_i)^{\intercal}(e^{-hM})^{\intercal}JD_j \cr\noalign{\vskip4truemm}
&\cdot X_j
+h\sum\limits_{i=1}^s \bar{b}_i(D_iX_i)^{\intercal}Je^{hM}X_i-h^2\sum\limits_{i=1}^s \sum\limits_{j=1}^s \bar{b}_i\bar{a}_{ij}(D_iX_i)^{\intercal}JS(-MX_j+D_jX_j)\cr\noalign{\vskip4truemm}
\end{aligned}
\end{equation*}
 \begin{equation*}
 \begin{aligned}
&
+h^2\sum\limits_{i=1}^s \sum\limits_{j=1}^s\bar{b}_i\bar{b}_j(D_iX_i)^{\intercal}J(D_jX_j)
\cr\noalign{\vskip4truemm}
=&\Psi_{n}^{\intercal} J\Psi_{n}+h\sum\limits_{i=1}^s \bar{b}_iX_i^{\intercal}\big(D_i^{\intercal}Je^{hM}+(e^{hM})^{\intercal}JD_i\big)X_i\cr\noalign{\vskip4truemm}
 &+h^2\sum\limits_{i=1}^s \sum\limits_{j=1}^s (D_iX_i)^{\intercal}\big(\bar{b}_i\bar{b}_j-\bar{b}_j\bar{a}_{ij}(e^{-hM})^{\intercal}
-\bar{b}_i\bar{a}_{ij}JSJ^{-1}\big)JD_jX_j\cr\noalign{\vskip4truemm}
&+h^2\sum\limits_{i=1}^s \sum\limits_{j=1}^s X_i^{\intercal}\big(\bar{b}_j\bar{a}_{ji}M^{\intercal} (e^{-hM})^{\intercal}
JD_j+\bar{b}_i\bar{a}_{ij}D_i^{\intercal} Je^{-hM}M\big)X_j.
 \end{aligned}
\end{equation*}
As the coefficients of the MVERK method satisfy conditions (\ref{symplectic conditions of MVERK}), it can be verified  that
$$\Psi_{n+1}^{\intercal}J\Psi_{n+1}=\Psi_{n}^{\intercal}J\Psi_{n},$$
whence the MVERK method is symplectic. The proof is completed.
$\hfill \square$
\end{proof}

\begin{rem} Theorem \ref{symplectic MVERK} presents the symplectic conditions of the MVERK method with $w_s(z)=0$. However, when $b_i$ ($i=1,\ldots,s$) are constants,  the second formula of (\ref{symplectic conditions of MVERK}) can never be satisfied. The symplectic conditions of
the MVERK method with $w_s(z)\neq0$ can be analyzed by the same way, unfortunately, the MVERK methods with $\bar{w}_s(z)\neq0$ can not  preserve the symplectic invariant.
\end{rem}
\begin{cor} There does not exist any symplectic MVERK methods.
\end{cor}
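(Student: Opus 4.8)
The plan is to dispose of the two possible regimes of an MVERK method separately, according to whether the correction term $\bar w_s(z)$ vanishes. Recall from the discussion preceding Theorem \ref{symplectic MVERK} that $\bar w_s(z)=0$ forces the order to satisfy $p\le 1$, whereas any method of order $p\ge 2$ necessarily carries a nonzero term $\bar w_s(z)$. For the latter regime I would argue, exactly as in the remark following Theorem \ref{symplectic MVERK}, that the presence of $\bar w_s(z)\neq 0$ in the update of \eqref{MVERK} destroys the formal conservation of the quadratic invariant $\Psi^{\intercal}J\Psi$, so that no such method can be symplectic. This reduces the corollary to excluding a symplectic MVERK method with $\bar w_s(z)=0$.

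For the remaining regime I would invoke Theorem \ref{symplectic MVERK}: a symplectic method must satisfy \eqref{symplectic conditions of MVERK}, and these relations are in fact necessary, since the residual quadratic form in the proof of that theorem must vanish for the arbitrary vectors $X_i$ and matrices $D_i$ produced as $U$, $f$ and $y_0$ range over admissible data. In particular the second relation $\bar b_i\bar b_j=\bar b_i\bar a_{ij}Je^{-hM}J^{-1}+\bar b_j\bar a_{ji}e^{-hM}$ holds for all $i,j$. The crucial observation is that its left-hand side is a fixed real scalar (that is, $\bar b_i\bar b_j I$) independent of the step size, whereas the right-hand side is a genuine matrix-valued function of $h$ through $e^{-hM}$; since the coefficients are real constants independent of $h$ and $M$, the identity must hold for every admissible $h$. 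Specializing to $i=j$ gives $\bar b_i^2 I=\bar b_i\bar a_{ii}\big(Je^{-hM}J^{-1}+e^{-hM}\big)$, and taking the trace, together with $\operatorname{tr}(Je^{-hM}J^{-1})=\operatorname{tr}(e^{-hM})$, yields $\bar b_i^2 m=2\bar b_i\bar a_{ii}\operatorname{tr}(e^{-hM})$, where $m$ is the dimension of the system.

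Now I would exploit that $\operatorname{tr}(e^{-hM})=\sum_k e^{-h\mu_k}$, with $\mu_k$ the eigenvalues of the diagonalizable matrix $M$, is a nonconstant function of $h$ whenever $M\neq\mathbf 0$ — this holds both for symmetric positive definite $M$ (real decaying exponentials) and for skew-Hermitian $M$ (oscillatory terms on the unit circle), and $M$ has eigenvalues of large modulus by hypothesis. Since the left-hand side $\bar b_i^2 m$ is constant in $h$, the product $\bar b_i\bar a_{ii}$ must vanish, and feeding this back into the displayed identity forces $\bar b_i=0$ for every $i$. But then $\sum_{i=1}^s \bar b_i=0\neq 1$, contradicting the first-order consistency condition that any convergent MVERK method (order $p\ge 1$) must satisfy. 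Hence no symplectic MVERK method with $\bar w_s(z)=0$ exists either, which completes the argument.

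The step I expect to be the main obstacle is the rigorous exclusion of the second condition in \eqref{symplectic conditions of MVERK}: one must argue convincingly that the fixed, $h$-independent constants $\bar b_i,\bar a_{ij}$ cannot absorb the $h$-dependence carried by $e^{-hM}$. The trace computation isolates this cleanly, reducing a matrix identity to the scalar statement that $\operatorname{tr}(e^{-hM})$ is nonconstant; the care needed is in justifying that symplecticity must hold for all step sizes $h$ rather than a single one, and in covering the skew-Hermitian case, where $e^{-hM}$ does not decay yet $\operatorname{tr}(e^{-hM})$ remains nonconstant.
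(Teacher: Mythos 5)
Your proposal follows essentially the same route as the paper: the remark immediately preceding the corollary argues exactly as you do, splitting into the regime $\bar w_s(z)\neq 0$ (where the correction term is said to break the formal conservation of the symplectic invariant) and the regime $\bar w_s(z)=0$ (where the second condition of (\ref{symplectic conditions of MVERK}) — a constant scalar equated to an $h$-dependent matrix expression — can never be satisfied by real constant coefficients). Your trace computation and the consistency contradiction $\sum_{i}\bar b_i=0\neq 1$ simply make rigorous the step that the paper asserts without detail, so this is a faithful, somewhat more careful rendering of the paper's own argument.
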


\section{Highly accurate implicit ERK methods}\label{sec3}

Section \ref{sec2}  is concerned with the symplectic conditions for the SVERK and MVERK methods. However,  the symplectic SVERK method  only has order one, and there is no existing  symplectic MVERK methods. In practice, we need some highly accurate and effective numerical methods, hence the high-order implicit SVERK (IMSVERK) and implicit MVERK (IMMVERK) methods are studied in this section.

We now consider a one-stage IMSVERK method with $w_2(z)=-\frac{h^2Mf(y_0)}{2!}$
 \begin{equation}\label{SVERK12}
\left\{
\begin{array}{l}
\displaystyle Y_1=e^{-c_1hM}y_0+ha_{11}f(Y_1), \cr\noalign{\vskip4truemm}
\displaystyle y_{1}=e^{-hM}y_0+hb_{1}f(Y_1)-\frac{h^2Mf(y_0)}{2!},
\end{array}
\right.
\end{equation}
and  a one-stage IMMVERK method with $\bar{w}_2(z)=-\frac{h^2Mf(y_0)}{2!}$
 \begin{equation}\label{MVERK21}
\left\{
\begin{array}{l}
\displaystyle \bar{Y}_1=y_0+h\bar{a}_{11}(-M\bar{Y}_1+f(\bar{Y}_1)), \cr\noalign{\vskip4truemm}
\displaystyle \bar{y}_{1}=e^{-hM}y_0+h\bar{b}_1f(\bar{Y}_1)-\frac{h^2Mf(y_0)}{2!}.
\end{array}
\right.
\end{equation}
 A numerical method is said to be of order $p$
if the Taylor expansions of numerical solutions $y_1$ or $\bar{y}_1$ and exact solution $y(t_0+h)$ coincides up to $h^p$ about $y_0$.
For convenience, we denote $g(t_0)=-My(t_0)+f(y(t_0))$, the Taylor expansion for exact solution $y(t_0+h)$ is
\begin{align*}
\displaystyle y&(t_0+h)=y(t_0)+hy'(t_0)+\frac{h^2}{2!}y''(t_0)+\frac{h^3}{3!}y'''(t_0)+\frac{h^4}{4!}y^{(4)}(t_0)
+\mathcal{O}{(h^5)}\cr\noalign{\vskip4truemm} &=y(t_0)+hg(t_0)+\frac{h^2}{2!}(-M+f'_y(y_0))g(t_0)+\frac{h^3}{3!}\big(M^2g(t_0)+(-M+f'_y(y(t_0)))f'_y(y(t_0))g(t_0)
\cr\noalign{\vskip4truemm}
&\quad-f'_y(y(t_0))Mg(t_0)+f''_{yy}(y(t_0))(g(t_0),g(t_0))\big)+\frac{h^4}{4!}\big(-M^3g(t_0)+M^2f'_y(y(t_0))g(t_0)-Mf'_y(y(t_0))\cr\noalign{\vskip4truemm}
&\quad\cdot(-M+f'_y(y(t_0)))g(t_0)-Mf''_{yy}(y(t_0))(g(t_0),g(t_0))+f'''_{yyy}(y(t_0))(g(t_0),g(t_0),g(t_0))+3f''_{yy}(y(t_0))\cr\noalign{\vskip4truemm}
&\quad \cdot ((-M+f'_y(y(t_0)))g(t_0),g(t_0)) +f'_y(y(t_0))(-M+f'_y(y(t_0)))(-M+f'_y(y(t_0)))g(t_0)\cr\noalign{\vskip4truemm}
&\quad+f'_y(y(t_0))f''_{yy}(y(t_0))(g(t_0),g(t_0))\big)+\mathcal{O}{(h^5)}.
\end{align*}
Under the assumption $y_0=y(t_0)$, the Taylor expansions for numerical solutions $y_1$ and $\bar{y}_1$ are
\begin{equation*}
\begin{aligned}
y_1&=(I-hM+\frac{h^2M^2}{2!}+\mathcal{O}{(h^3)})y_0+hb_1f(y_0-c_1hMy_0+a_{11}hf(y_0)
+\mathcal{O}(h^2))-\frac{h^2Mf(y_0)}{2!}\cr\noalign{\vskip4truemm}
&=y_0-hMy_0+\frac{h^2M^2}{2!}y_0+hb_1f(y_0)+h^2b_1c_1f'_y(y_0)(-My_0+f(y_0))-\frac{h^2Mf(y_0)}{2!}+\mathcal{O}{(h^3)}\cr\noalign{\vskip4truemm}
&=y_0-hMy_0+hb_1f(y_0)+\frac{h^2}{2!}(-M)(-My_0+f(y_0))+h^2b_1c_1f'_y(y_0)(-My_0+f(y_0))+\mathcal{O}{(h^3)},
\end{aligned}
\end{equation*}
and
\begin{equation*}
\begin{aligned}
\bar{y}_1&=(I-hM+\frac{h^2M^2}{2!}+\mathcal{O}{(h^3)})y_0+h\bar{b}_1\big[f(y_0)+h\bar{a}_{11}f'_y(y_0)(-My_0
+f(y_0)+\mathcal{O}{(h)})\big]-\frac{h^2Mf(y_0)}{2!}\cr\noalign{\vskip4truemm}
&=y_0-hMy_0+h\bar{b}_1f(y_0)+\frac{h^2M^2}{2!}y_0+h^2\bar{b}_1\bar{a}_{11}f'_y(y_0)(-My_0+f(y_0))-\frac{h^2Mf(y_0)}{2!}+\mathcal{O}{(h^3)}\cr\noalign{\vskip4truemm}
&=y_0-hMy_0+h\bar{b}_1f(y_0)+\frac{h^2}{2!}(-M)(-My_0+f(y_0))+h^2\bar{b}_1\bar{a}_{11}f'_y(y_0)(-My_0+f(y_0))+\mathcal{O}(h^3).
\end{aligned}
\end{equation*}

If we consider the  implicit second-order ERK methods
 with one stage,  then  $b_1=\bar{b}_1=1$ and $a_{11}=\bar{a}_{11}=\frac{1}{2}$. Therefore, the  second-order IMSVERK method with one stage is given by
 \begin{equation}\label{SVERK21}
\left\{
\begin{array}{l}
\displaystyle Y_1=e^{-\frac{1}{2}hM}y_0+\frac{h}{2}f(Y_1),\cr\noalign{\vskip4truemm}
\displaystyle y_{1}=e^{-hM}y_0+hf(Y_1)-\frac{h^2Mf(y_0)}{2!},
\end{array}
\right.
\end{equation}
which can be denoted by the Butcher tableau
\begin{equation}\label{Tableau-SVERK12}
\begin{aligned}
\begin{tabular}{c|c|c}
 $\frac{1}{2}$&$e^{-\frac{1}{2}hM}$&$\frac{1}{2}$ \\[3pt]
 \hline
$\raisebox{-1.3ex}[1.0pt]{$e^{-hM}$}$& $\raisebox{-1.3ex}[1.0pt]{$w_2(z)$}$ & $\raisebox{-1.3ex}[1.0pt]{$1$}$\\
\end{tabular}.
\end{aligned}
\end{equation}
The  second-order IMMVERK method with one stage is shown as
 \begin{equation}\label{MVERK22}
\left\{
\begin{array}{l}
\displaystyle \bar{Y}_1=y_0+\frac{h}{2}(-M\bar{Y}_1+f(\bar{Y}_1)), \cr\noalign{\vskip4truemm}
\displaystyle y_{1}=e^{-hM}y_0+hf(\bar{Y}_1)-\frac{h^2Mf(y_0)}{2!},
\end{array}
\right.
\end{equation}
which also can be indicated by the Butcher tableau
\begin{equation}\label{Tableau-MVERK12}
\begin{aligned}
\begin{tabular}{c|c|c}
 $\frac{1}{2}$&$I$&$\frac{1}{2}$ \\[3pt]
 \hline
$\raisebox{-1.3ex}[1.0pt]{$e^{-hM}$}$& $\raisebox{-1.3ex}[1.0pt]{$\bar{w}_2(z)$}$ & $\raisebox{-1.3ex}[1.0pt]{$1$}$\\
\end{tabular}.
\end{aligned}
\end{equation}

 ERK methods of collocation type were formulated by Hochbruck  and Ostermann, and their convergence properties also were analyzed in  \cite{Hochbruck2005b}.  Using the collocation code $c_1=1/2$, then the implicit second-order (stiff) ERK method with one stage is
 \begin{equation}\label{Tableau-ERK12}
\begin{aligned}
\begin{tabular}{c|c}
 $\frac{1}{2}$&$\frac{1}{2}\varphi_1(-\frac{1}{2}hM)$\\[3pt]
 \hline
& $\raisebox{-1.3ex}[1.0pt]{$\varphi_1(-hM)$}$\\
\end{tabular}
\end{aligned}
\end{equation}
with
\begin{equation}\label{entire functions}
\varphi_{ij}(-hM)=\varphi_i(-c_jhM)=\int_0^1 e^{-(1-\tau)c_jhM}\frac{\tau^{i-1}}{(i-1)!}d\tau.
\end{equation}
We notice that  when $M\rightarrow \mathbf{0}$, the methods  \eqref{Tableau-SVERK12}, \eqref{Tableau-MVERK12},
and \eqref{Tableau-ERK12} reduce to the implicit midpoint rule.

 Now, we consider the fourth-order IMSVERK and IMMVERK methods with two stages.  The order conditions of the implicit fourth-order  RK method with two stages are
\begin{equation}\label{order conditions}
\begin{aligned}
 \includegraphics{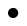} \qquad &b_1+b_2&=1, \cr\noalign{\vskip4truemm}
\includegraphics{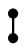} \qquad & b_1c_1+b_2c_2&=\frac{1}{2},\cr\noalign{\vskip4truemm}
\includegraphics{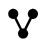} \qquad  &b_1c_1^2+b_2c_2^2&=\frac{1}{3},\cr\noalign{\vskip4truemm}
\includegraphics{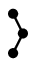} \qquad &b_1(a_{11}c_1+a_{12}c_2)+b_2(a_{21}c_1+a_{22}c_2)&=\frac{1}{6},\cr\noalign{\vskip4truemm}
 \includegraphics{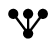} \qquad&b_1c_1^3+b_2c_2^3&=\frac{1}{4},\cr\noalign{\vskip4truemm}
 \includegraphics{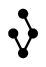} \qquad&b_1c_1(a_{11}c_1+a_{12}c_2)+b_2c_2(a_{21}c_1+a_{22}c_2)&=\frac{1}{8},\cr\noalign{\vskip4truemm}
 \includegraphics{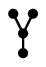} \qquad&b_1(a_{11}c_1^2+a_{12}c_2^2)+b_2(a_{21}c_1^2+a_{22}c_2^2)&=\frac{1}{12},\cr\noalign{\vskip4truemm}
 \includegraphics{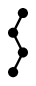} \qquad&(b_1a_{11}+b_2a_{21})(a_{11}c_1+a_{12}c_2)+(b_1a_{12}+b_2a_{22})(a_{21}c_1+a_{22}c_2)&=\frac{1}{24}.\cr\noalign{\vskip4truemm}
 \end{aligned}
\end{equation}
Under the assumptions $c_1=a_{11}+a_{12}$ and $c_2=a_{21}+a_{22}$, it has  a unique solution \cite{ Hammer1955,Hairer2006}.  The following theorem verifies that the  order conditions of fourth-order IMSVERK methods with two stages are identical to (\ref{order conditions}).

\begin{mytheo}\label{the proof of SVERK24}
Assume  the coefficients of a two-stage IMSVERK method with $w_4(z)$
 \begin{equation}\label{SVERK2-4}
\left\{
\begin{array}{l}
\displaystyle
Y_1=e^{-c_1hM}y_0+h\big[a_{11}f(Y_1)+a_{12}f(Y_2)\big],\cr\noalign{\vskip4truemm}
Y_2=e^{-c_2hM}y_0+h\big[a_{21}f(Y_1)+a_{22}f(Y_2)\big],\cr\noalign{\vskip4truemm}
\displaystyle y_{1}=e^{-hM}y_0+h(b_1f(Y_1)+b_2f(Y_2))+w_4(z),
\end{array}
\right.
\end{equation}
where
\begin{equation*}
\begin{aligned}
w_4(z)&=-\frac{h^2}{2!}Mf(y_0)+\frac{h^3}{3!}\big((M-f'_y(y_0))Mf(y_0)
-Mf'_y(y_0)g(y_0)\big) +\frac{h^4}{4!}\Big((-M+f'_y(y_0))M^2f(y_0)\\ &\quad+M^2f'_y(y_0)g(y_0)-Mf''_{yy}(y_0)\big(g(y_0),g(y_0)\big)-Mf'_y(y_0)(-M+f'_y(y_0))g(y_0)-f'_y(y_0)Mf'_y(y_0)\\
&\quad \cdot g(y_0)-f'_y(y_0)f'_y(y_0)Mf(y_0)+3f''_{yy}(y_0)\big(-Mf(y_0),g(y_0)\big)\Big),
\end{aligned}
\end{equation*}
which satisfy (\ref{order conditions}), then  the IMSVERK method has order four.
\end{mytheo}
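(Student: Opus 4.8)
The plan is to prove order four by the direct Taylor-expansion matching that the statement itself prescribes: setting $y_0=y(t_0)$, I would expand both the numerical update $y_1$ and the exact solution $y(t_0+h)$ in powers of $h$ about $y_0$ and show they agree through $h^4$. The expansion of $y(t_0+h)$ is already displayed above in terms of $g(y_0)=-My_0+f(y_0)$ and the derivatives $f'_y,f''_{yy},f'''_{yyy}$ evaluated at $y_0$; this is the target, and the whole argument reduces to reproducing each of its elementary-differential terms from the method.

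First I would expand the internal stages. Because the stages in \eqref{SVERK2-4} are implicit, I would determine $Y_1,Y_2$ by fixed-point iteration in $h$, writing $e^{-c_ihM}y_0=\sum_{k\ge0}\frac{(-c_ihM)^k}{k!}y_0$ and repeatedly substituting the lower-order approximations of $f(Y_j)$ back into $Y_i=e^{-c_ihM}y_0+h\sum_j a_{ij}f(Y_j)$. Using the row-sum normalization $c_i=\sum_j a_{ij}$, the leading shift $-c_ihMy_0$ combines with the $h\sum_j a_{ij}f(y_0)$ term to produce $h\,c_i\,g(y_0)$ at first order, exactly as in a classical stage but with $g$ in place of $f$. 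Carrying this to the order needed, namely $f(Y_i)$ through $h^3$ so that $h\,f(Y_i)$ is accurate through $h^4$, yields, besides the pure powers $M^ky_0$, all the mixed terms in which $f'_y,f''_{yy},f'''_{yyy}$ act on $g(y_0)$ and on $-Mf(y_0)$, each weighted by the appropriate polynomials in $c_i,a_{ij}$.

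Next I would assemble $y_1=e^{-hM}y_0+h\,(b_1f(Y_1)+b_2f(Y_2))+w_4(z)$, expanding $e^{-hM}y_0=\sum_{k\ge0}\frac{(-hM)^k}{k!}y_0$ and inserting the stage expansions. The structural observation that drives the proof is a clean separation of the exact expansion into three families: (i) the pure-$M$ terms $\frac{(-hM)^k}{k!}y_0$, which are reproduced verbatim by the factor $e^{-hM}y_0$; (ii) the genuinely mixed $M$--$f$ terms, those coupling $M$ with derivatives of $f$, such as $Mf'_y(y_0)g(y_0)$, $M^2f'_y(y_0)g(y_0)$, $Mf''_{yy}(y_0)(g(y_0),g(y_0))$ and $f'_y(y_0)Mf'_y(y_0)g(y_0)$, which the $e^{-c_ihM}$-shifted stages cannot supply and which are instead furnished term by term by $w_4(z)$; and (iii) the remaining terms in which only derivatives of $f$ act on $g(y_0)$. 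Once families (i) and (ii) are matched, the coefficients of the family-(iii) terms on the two sides coincide precisely when the scalars $b_i,c_i,a_{ij}$ satisfy the classical conditions \eqref{order conditions}, which by hypothesis they do.

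The main obstacle is the bookkeeping at orders $h^3$ and $h^4$: one must verify that $w_4(z)$ cancels \emph{exactly} the mixed terms of family (ii), leaving no residue in family (iii). I would organize this by grouping all contributions according to their elementary-differential type and checking each group in isolation. The orders $h^0,h^1$ are automatic; $h^2$ forces $b_1+b_2=1$ and $\sum_i b_ic_i=\tfrac12$, once the $-\tfrac{h^2}{2!}Mf(y_0)$ piece of $w_4(z)$ combines with the $\tfrac{h^2}{2!}M^2y_0$ from $e^{-hM}y_0$ to reproduce the $\tfrac12(-M)g(y_0)$ coupling; and the analogous but lengthier checks at $h^3$ and $h^4$ isolate the remaining conditions. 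The most delicate pieces are the nested $h^4$ differentials $f'_y(-M+f'_y)(-M+f'_y)g$, $f''_{yy}((-M+f'_y)g,g)$ and $f'_yf''_{yy}(g,g)$: after their $M$-coupled parts are removed by the matching terms of $w_4(z)$, the surviving scalar weights must equal $\tfrac1{24}$, $\tfrac18$ (carrying the factor $3$) and $\tfrac1{12}$, which are exactly the last entries of \eqref{order conditions}. Confirming that every group reduces to one of the eight conditions in \eqref{order conditions}, and that $w_4(z)$ introduces nothing beyond the family-(ii) cross terms, completes the proof.
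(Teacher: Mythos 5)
Your overall method --- expand the implicit stages by fixed-point iteration in $h$, assemble $y_1$, and match elementary differentials against the exact expansion under the row-sum condition $c_i=\sum_j a_{ij}$ --- is exactly the paper's proof. However, the structural claim on which you hang the bookkeeping is false, and it is the step your argument pivots on. You assert that the mixed $M$--$f$ terms form a family (ii) which ``the $e^{-c_ihM}$-shifted stages cannot supply'' and which is ``furnished term by term by $w_4(z)$,'' so that after matching families (i) and (ii) the remaining family (iii) (only $f$-derivatives acting on $g$) is settled by the classical conditions (\ref{order conditions}). This trichotomy is not exhaustive and the attribution is wrong. Already at order $h^3$, expanding $f(Y_i)$ with $Y_i=e^{-c_ihM}y_0+h\sum_j a_{ij}f(Y_j)$ produces the mixed term
\begin{equation*}
\frac{h^3}{2}\big(b_1c_1^2+b_2c_2^2\big)\,f'_y(y_0)M^2y_0,
\end{equation*}
which couples $M$ with $f'_y$, does \emph{not} occur in $w_4(z)$, and is not of your family-(iii) form. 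Its counterpart in the exact solution arises from splitting $-\frac{h^3}{3!}f'_y(y_0)Mg(y_0)=\frac{h^3}{3!}f'_y(y_0)M^2y_0-\frac{h^3}{3!}f'_y(y_0)Mf(y_0)$: only the second piece is cancelled by $w_4(z)$, while matching the first \emph{forces} $b_1c_1^2+b_2c_2^2=\frac{1}{3}$, an order condition you assigned exclusively to family (iii). The same entanglement recurs at order $h^4$ with the stage-supplied mixed terms $f'_y(y_0)M^3y_0$ and $f''_{yy}(y_0)(M^2y_0,g(y_0))$ (both weighted by $b_1c_1^3+b_2c_2^3$, whose matching forces $\sum_i b_ic_i^3=\frac14$) and $f'_y(y_0)f'_y(y_0)M^2y_0$ (weighted by $b_1(a_{11}c_1^2+a_{12}c_2^2)+b_2(a_{21}c_1^2+a_{22}c_2^2)$, forcing the value $\frac{1}{12}$). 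So single elementary differentials of the exact flow are split between $w_4(z)$ and stage contributions whose matching consumes order conditions; families (ii) and (iii) cannot be matched independently, and ``$w_4(z)$ cancels exactly the mixed terms, leaving no residue'' is precisely the verification that fails.

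The gap is repairable, because your fallback plan --- group every contribution by elementary-differential type and check each group in isolation --- is what the paper actually does: it carries the corrections $\frac{h(c_iM)^2}{2}y_0$ and $-\frac{h^2(c_iM)^3}{3!}y_0$ through the expansions of $f(Y_i)$, so the stage-supplied mixed terms appear explicitly alongside the $f$-only terms, and each group is matched against the exact expansion, yielding exactly the eight conditions (\ref{order conditions}). But as written, your proof would either orphan those mixed terms (they fit none of your three families) or discover at matching time that the advertised separation does not close; the clean ``(i) from $e^{-hM}$, (ii) from $w_4$, (iii) from the order conditions'' picture must be abandoned in favor of the term-by-term accounting the paper performs.
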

\begin{proof}  Ignoring the term $\mathcal{O}(h^5)$,  and the Taylor  expansion of numerical solution $y_1$ is shown by
\begin{equation*}
y_1=e^{-hM}y_0+hb_1f\big((I-c_1hM+\frac{(c_1hM)^2}{2!}-
\frac{(c_1hM)^3}{3!}+\mathcal{O}(h^4))y_0+ha_{11}f(Y_1)+ha_{12}f(Y_2)\big)+hb_2
\end{equation*}
\begin{equation}
\begin{aligned}
&\cdot f\big((I-c_2hM+\frac{(c_2hM)^2}{2!}-\frac{(c_2hM)^3}{3!}+\mathcal{O}(h^4))y_0+ha_{21}f(Y_1)+ha_{22}f(Y_2)
\big)+w_4(z)+\mathcal{O}(h^5)\cr\noalign{\vskip4truemm}
=&(I-hM+\frac{h^2M^2}{2}-\frac{h^3M^3}{3!}+\frac{h^4M^4}{4!}+\mathcal{O}(h^5))y_0+h(b_1+b_2)f(y_0)
+h^2b_1f^{\prime}_y(y_0)(-c_1My_0 +a_{11} \cr\noalign{\vskip4truemm}
&\cdot f(Y_1)+a_{12}f(Y_2)+\frac{h(c_1M)^2}{2}y_0-\frac{h^2(c_1M)^3}{3!}y_0
+\mathcal{O}(h^3))+h^2b_2f^{\prime}_y(y_0)(-c_2My_0+a_{21}f(Y_1)\cr\noalign{\vskip4truemm}
& +a_{22}f(Y_2)+\frac{h(c_2M)^2}{2}y_0-\frac{h^2(c_2M)^3}{3!}y_0+\mathcal{O}(h^3))+\frac{h^3}{2}b_1f''_{yy}(y_0)\big(-c_1My_0+a_{11}f(Y_1)
+a_{12}\cr\noalign{\vskip4truemm}
&\cdot f(Y_2) +\frac{h(c_1M)^2}{2}y_0,-c_1My_0+a_{11}f(Y_1)+a_{12}f(Y_2)+\frac{h(c_1M)^2}{2}y_0\big)+\frac{h^3}{2}b_2f''_{yy}(y_0)\big(-c_2My_0
\cr\noalign{\vskip4truemm}
&+a_{21}f(Y_1)+a_{22}f(Y_2)+\frac{h(c_2M)^2}{2}y_0,-c_2My_0+a_{11}f(Y_1)+a_{12}f(Y_2)+\frac{h(c_2M)^2}{2}y_0\big)+\frac{h^4}{3!}b_1\cr\noalign{\vskip4truemm}
&\cdot f'''_{yyy}(y_0)\big(-c_1M y_0+a_{11}f(Y_1)+a_{12}f(Y_2),-c_1My_0+a_{11}f(Y_1)+a_{12}f(Y_2),-c_1My_0+a_{11}\cr\noalign{\vskip4truemm}
&\cdot f(Y_1)+a_{12}f(Y_2)\big)+\frac{h^4}{3!}b_2f'''_{yyy}(y_0)(-c_2My_0+a_{21}f(Y_1)+a_{22}f(Y_2),-c_2My_0+a_{21}f(Y_1)\cr\noalign{\vskip4truemm}
&+a_{22}f(Y_2),-c_2My_0+a_{21}f(Y_1)+a_{22}f(Y_2))+w_4(z)+\mathcal{O}(h^5).
\end{aligned}
\end{equation}
Under the  appropriate assumptions $c_i=\sum\limits_{j=1}^s a_{ij}$ for $i,j=1,\ldots,s$, we have
\begin{equation*}\label{Taylor series of N}
\begin{aligned}
y_1=&e^{-hM}y_0+h(b_1+b_2)f(y_0)+h^2b_1f^{\prime}_y(y_0)\big(-c_1My_0+a_{11}f(Y_1)+a_{12}f(Y_2)+\frac{h(c_1M)^2}{2}y_0
\cr\noalign{\vskip4truemm}
&-\frac{h^2(c_1M)^3}{3!}y_0\big)+h^2b_2f^{\prime}_y(y_0)\big(-c_2My_0+a_{21}f(Y_1)+a_{22}f(Y_2)+\frac{h(c_2M)^2}{2}y_0
-\frac{h^2(c_2M)^3}{3!}y_0\big)\cr\noalign{\vskip4truemm}
&+\frac{h^3}{2}b_1f''_{yy}(y_0)\big(-c_1My_0+a_{11}f(Y_1)+a_{12}f(Y_2)+\frac{h(c_1M)^2}{2}y_0,
-c_1My_0+a_{11}f(Y_1)+a_{12}f(Y_2)\cr\noalign{\vskip4truemm}
&+\frac{h(c_1M)^2}{2}y_0\big)+\frac{h^3}{2}b_2f''_{yy}(y_0)\big(-c_2My_0+a_{21}f(Y_1)+a_{22}f(Y_2)+\frac{h(c_2M)^2}{2}y_0,
-c_2My_0+a_{21}\cr\noalign{\vskip4truemm}
\end{aligned}
\end{equation*}
\begin{equation*}
\begin{aligned}
&\cdot f(Y_1)+a_{22}f(Y_2)+\frac{h(c_2M)^2}{2}y_0\big)+\frac{h^4}{3!}b_1f'''_{yyy}(y_0)\big(c_1(-My_0+f(y_0)),c_1(-My_0+f(y_0)),c_1\cr\noalign{\vskip4truemm}
&\cdot (-My_0+f(y_0))\big) +\frac{h^4}{3!}b_2f'''_{yyy}(y_0)\big(c_2(-My_0+f(y_0)),c_2(-My_0+f(y_0)),c_2(-My_0+f(y_0))\big)\cr\noalign{\vskip4truemm}
&+w_4(z)+\mathcal{O}(h^5).
\end{aligned}
\end{equation*}
Inserting $Y_1$ and $Y_2$ of (\ref{SVERK2-4}) into $f(Y_1)$ and $f(Y_2)$ yields
\begin{equation*}
\begin{aligned}
a_{11}f(Y_1)&=a_{11}f\big(y_0-c_1hMy_0+ha_{11}f(Y_1)+ha_{12}f(Y_2)+\frac{(c_1hM)^2}{2!}y_0\big)\cr\noalign{\vskip4truemm}
&=a_{11}f(y_0)+hf^{\prime}_y(y_0)\big(-c_1My_0+a_{11}f(y_0)+ha_{11}f^{\prime}_y(y_0)(-c_1My_0+a_{11}f(y_0)+a_{12}f(y_0))\cr\noalign{\vskip4truemm}
&\quad +a_{12}f(y_0)+ha_{12}f^{\prime}_y(y_0)(-c_2My_0+a_{21}f(y_0)+a_{22}f(y_0))+\frac{h(c_1M)^2}{2}y_0\big)+\frac{h^2}{2}a_{11}c_1^2 \cr\noalign{\vskip4truemm}
&\quad \cdot f''_{yy}(y_0)(g(y_0),g(y_0))\cr\noalign{\vskip4truemm}
&=a_{11}f(y_0)+ha_{11}c_1f^{\prime}_{y}(y_0)g(y_0)+h^2(a_{11}^2c_1+a_{11}a_{12}c_2)f^{\prime}_{y}(y_0)f^{\prime}_{y}(y_0)g(y_0)
+\frac{h^2}{2}a_{11}c_1^2f^{\prime}_{y}(y_0)\cr\noalign{\vskip4truemm}
&\quad \cdot M^2y_0+\frac{h^2}{2}a_{11}c_1^2f''_{yy}(y_0)(g(y_0),g(y_0)),
\end{aligned}
\end{equation*}
and
\begin{equation*}
\begin{aligned}
a_{12}f(Y_2)&=a_{12}f(y_0)+ha_{12}c_2f^{\prime}_{y}(y_0)g(y_0)+h^2(a_{12}a_{21}c_1+a_{12}a_{22}c_2)f^{\prime}_{y}(y_0)f^{\prime}_{y}(y_0)g(y_0)
+\frac{h^2}{2}a_{12}c_2^2\cr\noalign{\vskip4truemm}
&\quad \cdot f^{\prime}_{y}(y_0)M^2y_0+\frac{h^2}{2}a_{12}c_2^2f''_{yy}(y_0)(g(y_0),g(y_0)).
\end{aligned}
\end{equation*}
Hence, we get
\begin{equation*}
\begin{aligned}
&h^2b_1f^{\prime}(y_0)\big(-c_1My_0+a_{11}f(Y_1)+a_{12}f(Y_2)+\frac{h(c_1M)^2}{2}y_0
-\frac{h^2(c_1M)^3}{3!}y_0\big)=h^2b_1c_1f^{\prime}_y(y_0)g(y_0)\cr\noalign{\vskip4truemm}
&+\frac{h^3}{2}b_1c_1^2f^{\prime}_y(y_0) M^2y_0-\frac{h^4}{3!}b_1c_1^3f^{\prime}_y(y_0)M^3y_0+h^3b_1(a_{11}c_1+a_{12}c_2)f^{\prime}_y(y_0)f^{\prime}_y(y_0)g(y_0)+h^4b_1(a_{11}^2c_1
\cr\noalign{\vskip4truemm}
&+a_{11}a_{12}c_2+a_{12}a_{21}c_1+a_{12}a_{22}c_2)f^{\prime}_y(y_0)f^{\prime}_y(y_0)f^{\prime}_y(y_0)g(y_0)
+\frac{h^4}{2}b_1(a_{11}c_1^2+a_{12}c_2^2)\big(f^{\prime}_y(y_0)f^{\prime}_y(y_0)M^2\cr\noalign{\vskip4truemm}
&\cdot y_0+f^{\prime}_y(y_0)f''_{yy}(y_0)(g(y_0),g(y_0))\big),
\end{aligned}
\end{equation*}
and
\begin{equation*}
\begin{aligned}
&h^2b_2f^{\prime}(y_0)\big(-c_2My_0+a_{21}f(Y_1)+a_{22}f(Y_2)+\frac{h(c_2M)^2}{2}y_0
-\frac{h^2(c_2M)^3}{3!}y_0\big)=h^2b_2c_2f^{\prime}_y(y_0)g(y_0)\cr\noalign{\vskip4truemm}
&+\frac{h^3}{2}b_2c_2^2f^{\prime}_y(y_0) M^2y_0-\frac{h^4}{3!}b_2c_2^3f^{\prime}_y(y_0)M^3y_0+h^3b_2(a_{21}c_1+a_{22}c_2)f^{\prime}_y(y_0)f^{\prime}_y(y_0)g(y_0)+h^4b_1(a_{22}^2c_2
\cr\noalign{\vskip4truemm}
\end{aligned}
\end{equation*}
\begin{equation*}
\begin{aligned}
&+a_{22}a_{21}c_1+a_{21}a_{12}c_2+a_{21}a_{11}c_1)f^{\prime}_y(y_0)f^{\prime}_y(y_0)f^{\prime}_y(y_0)g(y_0)
+\frac{h^4}{2}b_2(a_{21}c_1^2+a_{22}c_2^2)\big(f^{\prime}_y(y_0)f^{\prime}_y(y_0)M^2\cr\noalign{\vskip4truemm}
&\cdot y_0+f^{\prime}_y(y_0)f''_{yy}(y_0)(g(y_0),g(y_0))\big).
\end{aligned}
\end{equation*}
Likewise, a direct calculation leads to
\begin{equation*}
\begin{aligned}
&\frac{h^3}{2}b_1f''_{yy}(y_0)\big(-c_1My_0+a_{11}f(Y_1)+a_{12}f(Y_2)+\frac{h(c_1M)^2}{2}y_0,-c_1My_0+a_{11}f(Y_1)+a_{12}f(Y_2)\cr\noalign{\vskip4truemm}
&\quad +\frac{h(c_1M)^2}{2}y_0\big)\cr\noalign{\vskip4truemm}
&=\frac{h^3}{2}b_1f''_{yy}(y_0)\big(c_1g(y_0)+h(a_{11}c_1+a_{12}c_2)f^{\prime}_y(y_0)g(y_0)+\frac{h(c_1M)^2}{2}y_0,c_1g(y_0)+h(a_{11}c_1+a_{12}c_2)\cr\noalign{\vskip4truemm}
&\quad \cdot f^{\prime}_y(y_0)g(y_0)+\frac{h(c_1M)^2}{2}y_0)\big)\cr\noalign{\vskip4truemm}
&=\frac{h^3}{2}b_1c_1^2f''_{yy}(y_0)(g(y_0),g(y_0))+h^4b_1(a_{11}c_1+a_{12}c_2)c_1f''_{yy}(y_0)(f^{\prime}_y(y_0)g(y_0),g(y_0))\cr\noalign{\vskip4truemm}
&\quad +\frac{h^4}{2}b_1c_1^3f''_{yy}(y_0)(M^2y_0,g(y_0))
\end{aligned}
\end{equation*}
and
\begin{equation*}
\begin{aligned}
&\frac{h^3}{2}b_2f''_{yy}(y_0)\big(-c_2My_0+a_{21}f(Y_1)+a_{22}f(Y_2)+\frac{h(c_2M)^2}{2}y_0,-c_2My_0+a_{21}f(Y_1)+a_{22}f(Y_2)\cr\noalign{\vskip4truemm}
&\quad +\frac{h(c_2M)^2}{2}y_0\big)\cr\noalign{\vskip4truemm}
&=\frac{h^3}{2}b_2c_2^2f''_{yy}(y_0)(g(y_0),g(y_0))+h^4b_2(a_{21}c_1+a_{22}c_2)c_2f''_{yy}(y_0)(f^{\prime}_y(y_0)g(y_0),g(y_0))\cr\noalign{\vskip4truemm}
&\quad +\frac{h^4}{2}b_2c_2^3f''_{yy}(y_0)(M^2y_0,g(y_0)).
\end{aligned}
\end{equation*}
Summing up, the Taylor expansion for numerical solution $y_1$ is
\begin{equation*}
\begin{aligned}
y_1&=y_0-hMy_0+h(b_1+b_2)f(y_0)-\frac{h^2Mg(y_0)}{2!}+h^2(b_1c_1+b_2c_2)f^{\prime}_y(y_0)g(y_0)-\frac{h^3}{3!}f^{\prime}_y(y_0)Mf(y_0)
\cr\noalign{\vskip4truemm}
&\quad+\frac{h^3}{3!}M(M-f^{\prime}_y(y_0))g(y_0)+\frac{h^3}{2}(b_1c_1^2+b_2c_2^2)\big(f^{\prime}_y(y_0)M^2y_0+f''_{yy}(y_0)(g(y_0),g(y_0))\big)\cr\noalign{\vskip4truemm}
&\quad +h^3(b_1(a_{11}c_1+a_{12}c_2)+b_2(a_{21}c_1+a_{22}c_2))
f^{\prime}_y(y_0)f^{\prime}_y(y_0)g(y_0)\cr\noalign{\vskip4truemm}
&\quad +\frac{h^4}{4!}\big(-M^3g(y_0)+M^2f^{\prime}_y(y_0)g(y_0)
-Mf^{\prime}_y(y_0)(-M+f^{\prime}_y(y_0))g(y_0)-Mf''_{yy}(y_0)(g(y_0),g(y_0))\cr\noalign{\vskip4truemm}
&\quad-f^{\prime}_y(y_0)Mf^{\prime}_y(y_0)g(y_0)-f^{\prime}_y(y_0)f^{\prime}_y(y_0)Mf(y_0)+f^{\prime}_y(y_0)M^2f(y_0)+3f''_{yy}(y_0)(-Mf(y_0),g(y_0))\big)\cr\noalign{\vskip4truemm}
\end{aligned}
\end{equation*}
\begin{equation*}
\begin{aligned}
&\quad +h^4(b_1c_1^3+b_2c_2^3)\big(\frac{1}{3!}f'''_{yyy}(y_0)(g(y_0),g(y_0),g(y_0))+\frac{1}{2}f''_{yy}(y_0)(M^2y_0,g(y_0))-\frac{1}{3!}f^{\prime}_y(y_0)M^3y_0\big)\cr\noalign{\vskip4truemm}
&\quad+h^4(b_1c_1(a_{11}c_1+a_{12}c_2)+b_2c_2(a_{21}c_1+a_{22}c_2))f''_{yy}(y_0)(f^{\prime}_y(y_0)g(y_0),g(y_0))\cr\noalign{\vskip4truemm}
&\quad +\frac{h^4}{2}\big(b_1(a_{11}c_1^2+a_{12}c_2^2)+b_2(a_{21}c_1^2+a_{22}c_2^2)\big)f^{\prime}_y(y_0)(f''_{yy}(y_0)(g(y_0),g(y_0))+f^{\prime}_y(y_0)f^{\prime}_y(y_0)M^2y_0)\cr\noalign{\vskip4truemm}
&\quad+h^4\big(b_1(a_{11}^2c_1+(a_{11}+a_{22})a_{12}c_2+a_{12}a_{21}c_1)+b_2(a_{22}^2c_2+(a_{11}+a_{22})a_{21}c_1+a_{21}a_{12}c_2)\big)(f^{\prime}_y(y_0))^3\cr\noalign{\vskip4truemm}
&\quad \cdot g(y_0)+\mathcal{O}(h^5). \cr\noalign{\vskip4truemm}
\end{aligned}
\end{equation*}
 By comparing with the Taylor expansion of exact solution $y(t_0+h)$, it can be verified that the IMSVERK method with coefficients satisfying the order conditions (\ref{order conditions}) has order four. The proof of the theorem is completed.
 $\hfill \square$
\end{proof}

Theorem \ref{the proof of SVERK24} indicates that there exists a unique fourth-order IMSVERK method with two stages:
 \begin{equation}\label{SVERK24}
\left\{
\begin{array}{l}
\displaystyle
Y_1=e^{-\frac{3-\sqrt{3}}{6}hM}y_0+h\big[\frac{1}{4}f(Y_1)+\frac{3-2\sqrt{3}}{12}f(Y_2)\big],\cr\noalign{\vskip4truemm}
\displaystyle
Y_2=e^{-\frac{3+\sqrt{3}}{6}hM}y_0+h\big[\frac{3+2\sqrt{3}}{12}f(Y_1)+\frac{1}{4}f(Y_2)\big],\cr\noalign{\vskip4truemm}
\displaystyle y_{1}=e^{-hM}y_0+\frac{h}{2}(f(Y_1)+f(Y_2))-\frac{h^2}{2!}Mf(y_0)+\frac{h^3}{3!}\big((M-f'_y(y_0))Mf(y_0)-Mf'_y(y_0)g(y_0)\big) \cr\noalign{\vskip4truemm}
\displaystyle
\quad +\frac{h^4}{4!}\Big((-M+f'_y(y_0))M^2f(y_0)+M^2f'_y(y_0)g(y_0)-Mf''_{yy}(y_0)\big(g(y_0),g(y_0)\big)-Mf'_y(y_0)(-M\cr\noalign{\vskip4truemm}
\displaystyle
\quad +f'_y(y_0))g(y_0)-f'_y(y_0)Mf'_y(y_0)g(y_0)-f'_y(y_0)f'_y(y_0)Mf(y_0)+3f''_{yy}(y_0)\big(-Mf(y_0),g(y_0)\big)\Big).
\end{array}
\right.
\end{equation}
The method (\ref{SVERK24}) can be displayed by the Butcher tableau
\begin{equation}\label{Tableau-SVERK24}
\begin{aligned} &\quad\quad\begin{tabular}{c|c|cc}
 $\frac{1}{2}-\frac{\sqrt{3}}{6}$&$e^{-\frac{3-\sqrt{3}}{6}hM}$&$\frac{1}{4}$&$\frac{3-2\sqrt{3}}{12}$\\[3pt]
  $\frac{1}{2}+\frac{\sqrt{3}}{6}$&$e^{-\frac{3+\sqrt{3}}{6}hM}$&$\frac{3+2\sqrt{3}}{12}$&$\frac{1}{4}$ \\[3pt]
 \hline
  $\raisebox{-1.3ex}[1.0pt]{$e^{-hM}$}$ & $\raisebox{-1.3ex}[1.0pt]{$w_4(z)$}$&$\raisebox{-1.3ex}[1.0pt]{$\frac{1}{2}$}$&$\raisebox{-1.3ex}[1.0pt]{$\frac{1}{2}$}$  \\
\end{tabular}.
\end{aligned}
\end{equation}

The following theorem shows the order conditions of the fourth-order IMSVERK  method with two stages are exactly identical to  (\ref{order conditions}) as well.

\begin{mytheo}\label{MVERK4proof}
Suppose that the coefficients of a two-stage IMMVERK method with $\bar{w}_4(z)$
 \begin{equation}\label{MVERK2-4}
\left\{
\begin{array}{l}
\displaystyle
\bar{Y}_1=y_0+h\big[\bar{a}_{11}(-M\bar{Y}_1+f(\bar{Y}_1))+\bar{a}_{12}(-M\bar{Y}_2+f(\bar{Y}_2))\big],\cr\noalign{\vskip4truemm}
\bar{Y}_2=y_0+h\big[\bar{a}_{21}(-M\bar{Y}_1+f(\bar{Y}_1))+\bar{a}_{22}(-M\bar{Y}_2+f(\bar{Y}_2))\big],\cr\noalign{\vskip4truemm}
\displaystyle \bar{y}_{1}=e^{-hM}y_0+h(\bar{b}_1f(\bar{Y}_1)+\bar{b}_2f(\bar{Y}_2))+\bar{w}_4(z),
\end{array}
\right.
\end{equation}
where
\begin{equation*}
\begin{aligned}
\bar{w}_4(z)&=-\frac{h^2}{2!}Mf(y_0)+\frac{h^3}{3!}(M^2f(y_0)-Mf'_y(y_0)g(y_0)) +\frac{h^4}{4!}(-M^3f(y_0)+M^2f'_y(y_0)g(y_0)-Mf''_{yy}(y_0)\cr\noalign{\vskip4truemm}
&\quad \cdot (g(y_0),g(y_0)) -Mf'_y(y_0)(-M+f'_y(y_0))g(y_0)),
\end{aligned}
\end{equation*}
which satisfy (\ref{order conditions}), then the IMMVERK method has order four.
\end{mytheo}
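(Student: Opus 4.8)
The plan is to mirror the proof of Theorem \ref{the proof of SVERK24}: expand the numerical solution $\bar{y}_1$ as a Taylor series in $h$ about $y_0$ and match it term by term against the Taylor expansion of the exact solution $y(t_0+h)$ displayed before Theorem \ref{the proof of SVERK24}. The key structural observation is that, writing $F(y)=-My+f(y)$ so that $F(y_0)=g(y_0)$, the internal stages in \eqref{MVERK2-4} are exactly the classical two-stage RK stages applied to the field $F$, namely $\bar{Y}_i=y_0+h\sum_j\bar{a}_{ij}F(\bar{Y}_j)$. Hence their Taylor expansions are governed by the usual elementary differentials of $F$ together with the coefficients $\bar{a}_{ij}$ and $\bar{c}_i=\sum_j\bar{a}_{ij}$.

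First I would expand each $\bar{Y}_i$ to order $h^3$, which suffices since the stages enter the update multiplied by $h$, obtaining $\bar{Y}_i=y_0+h\bar{c}_iF(y_0)+h^2\sum_j\bar{a}_{ij}\bar{c}_jf'_y(y_0)g(y_0)+\cdots$. Next I substitute these into $f(\bar{Y}_i)$ and expand; the essential subtlety that distinguishes this from a pure classical RK expansion is that the update sums $f(\bar{Y}_i)$ rather than $F(\bar{Y}_i)$, so the outer derivatives $f'_y,f''_{yy},f'''_{yyy}$ act on arguments built from $F$. This yields $h\sum_i\bar{b}_if(\bar{Y}_i)=h(\bar{b}_1+\bar{b}_2)f(y_0)+h^2\sum_i\bar{b}_i\bar{c}_if'_y(y_0)g(y_0)+\cdots$, in which every internal occurrence of the field naturally carries $g(y_0)=F(y_0)$. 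Finally I expand $e^{-hM}y_0=\big(I-hM+\tfrac{h^2M^2}{2!}-\tfrac{h^3M^3}{3!}+\tfrac{h^4M^4}{4!}\big)y_0+\mathcal{O}(h^5)$ and add $\bar{w}_4(z)$.

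Collecting by order, the conditions \eqref{order conditions} force the classical-type contributions to agree with the corresponding elementary differentials in $y(t_0+h)$: $\bar{b}_1+\bar{b}_2=1$ at order $h$, $\sum_i\bar{b}_i\bar{c}_i=\tfrac12$ at order $h^2$, the two conditions $\sum_i\bar{b}_i\bar{c}_i^2=\tfrac13$ and $\sum_i\bar{b}_i\sum_j\bar{a}_{ij}\bar{c}_j=\tfrac16$ at order $h^3$, and the four remaining conditions at order $h^4$. What survives after imposing these are precisely the terms in which $M$ appears on the outside (such as $M^2g(y_0)$, $-Mf'_y(y_0)g(y_0)$, and the order-four $M$-weighted differentials), which can be produced neither by $h\sum_i\bar{b}_if(\bar{Y}_i)$ nor by the $M^ky_0$ terms from $e^{-hM}y_0$; the role of $\bar{w}_4(z)$ is exactly to supply this residual. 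I would verify the matching order by order: at $h^2$, the term $\tfrac{h^2M^2}{2}y_0$ from the exponential plus $\tfrac{h^2}{2}f'_y(y_0)g(y_0)$ from the update must equal $\tfrac{h^2}{2}(-M+f'_y(y_0))g(y_0)=\tfrac{h^2}{2}(M^2y_0-Mf(y_0)+f'_y(y_0)g(y_0))$, and the gap $-\tfrac{h^2}{2}Mf(y_0)$ is the leading term of $\bar{w}_4(z)$; the $h^3$ and $h^4$ checks proceed identically.

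The main obstacle is the order-four bookkeeping: tracking the cross terms where $M$ interacts with $f'_y$, $f''_{yy}$, and $f'''_{yyy}$, and confirming that the displayed $\bar{w}_4(z)$ reproduces every residual $M$-weighted differential. I expect no conceptual difficulty beyond careful comparison against the exact expansion. Notably, $\bar{w}_4(z)$ differs from the SVERK correction $w_4(z)$ precisely because the MVERK stages expand $F=-My+f$ rather than $f$ alone, so the $M$-weighted residuals at orders $h^3$ and $h^4$ take a different form; isolating exactly these residuals is the only delicate point, the remainder being the direct analogue of the computation carried out for Theorem \ref{the proof of SVERK24}.
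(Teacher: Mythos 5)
Your route is the same as the paper's: the paper's entire proof of Theorem \ref{MVERK4proof} is the remark that, under $\bar{c}_i=\bar{a}_{i1}+\bar{a}_{i2}$, one compares the Taylor expansion of $\bar{y}_1$ with that of $y(t_0+h)$ ``in a same way as Theorem \ref{the proof of SVERK24}'', with all details omitted. Your structural skeleton of that omitted computation is also right: the stages of \eqref{MVERK2-4} are classical RK stages for the full field $F(y)=-My+f(y)$, the update applies $f$ (not $F$) to them, the conditions \eqref{order conditions} account for all elementary-differential contributions, $\bar{w}_4(z)$ supplies the residual $M$-weighted terms, and your order-$h^2$ check is correct.

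There is, however, a concrete error at precisely the point you identify as delicate. Your displayed stage expansion
\begin{equation*}
\bar{Y}_i=y_0+h\bar{c}_iF(y_0)+h^2\sum_j\bar{a}_{ij}\bar{c}_j\,f'_y(y_0)g(y_0)+\cdots
\end{equation*}
has the wrong $h^2$ coefficient: since the stages are RK stages for $F$, the inner Jacobian must be $F'(y_0)=-M+f'_y(y_0)$, so the correct coefficient is $\sum_j\bar{a}_{ij}\bar{c}_j\,\big(-M+f'_y(y_0)\big)g(y_0)$. This is not cosmetic. Feeding your formula into the update and imposing $\sum_{i,j}\bar{b}_i\bar{a}_{ij}\bar{c}_j=\tfrac16$ and $\sum_i\bar{b}_i\bar{c}_i^2=\tfrac13$ yields at order $h^3$ the contribution $\tfrac16\big(f'_y(y_0)f'_y(y_0)g(y_0)+f''_{yy}(y_0)(g(y_0),g(y_0))\big)$, whereas matching the exact expansion (after the $e^{-hM}y_0$ and $\bar{w}_4(z)$ contributions are accounted for) requires $\tfrac16\big(f'_y(y_0)(-M+f'_y(y_0))g(y_0)+f''_{yy}(y_0)(g(y_0),g(y_0))\big)$. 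The missing piece $-\tfrac16 f'_y(y_0)Mg(y_0)$, with $M$ \emph{inside} (to the right of $f'_y$), cannot be recovered from $\bar{w}_4(z)$, whose $h^3$ part $\tfrac{1}{3!}\big(M^2f(y_0)-Mf'_y(y_0)g(y_0)\big)$ carries $M$ only on the outside; so as written your $h^3$ matching fails by $-\tfrac{h^3}{6}f'_y(y_0)Mg(y_0)$, and the analogous inner-$M$ terms at order $h^4$ would be lost too. Once the stage expansion is corrected, these terms appear automatically, the order conditions factor out exactly as you describe, and the verification completes as in Theorem \ref{the proof of SVERK24}; indeed, the presence of these inner-$M$ differentials in the MVERK stages (versus their absence in SVERK's exponentially shifted stages) is exactly why $\bar{w}_4(z)\neq w_4(z)$.
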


\begin{proof}
Under the assumptions $\bar{c}_1=\bar{a}_{11}+\bar{a}_{12}$ and $\bar{c}_2=\bar{a}_{21}+\bar{a}_{22}$, by comparing  the Taylor expansions   of numerical solution $\bar{y}_1$ with exact solution $y(t_0+h)$, the result can  be proved in a same way as Theorem \ref{the proof of SVERK24}. Therefore, we omit the details.
$\hfill\square$
\end{proof}

Then, we present the unique fourth-order IMMVERK  method with two stages
 \begin{equation}\label{MVERK24}
\left\{
\begin{array}{l}
\displaystyle
Y_1=y_0+h\Big[\frac{1}{4}(-MY_1+f(Y_1))+\frac{3-2\sqrt{3}}{12}(-MY_2+f(Y_2))\Big],\cr\noalign{\vskip4truemm}
\displaystyle
Y_2=y_0+h\Big[\frac{3+2\sqrt{3}}{12}(-MY_1+f(Y_1))+\frac{1}{4}(-MY_2+f(Y_2))\Big],\cr\noalign{\vskip4truemm}
\displaystyle y_{1}=e^{-hM}y_0+\frac{h}{2}(f(Y_1)+f(Y_2))-\frac{h^2}{2!}Mf(y_0)+\frac{h^3}{3!}(M^2f(y_0)-Mf'_y(y_0)g(y_0))
 +\frac{h^4}{4!}(-M^3f(y_0) \cr\noalign{\vskip4truemm}
 \displaystyle \qquad +M^2f'_y(y_0)g(y_0)-Mf''_{yy}(y_0)(g(y_0),g(y_0))
 -Mf'_y(y_0)(-M+f'_y(y_0))g(y_0)),
\end{array}
\right.
\end{equation}
with $g(y_0)=-My_0+f(y_0)$, which can be denoted by the Butcher tableau
\begin{equation}\label{Tableau-MVERK24}
\begin{aligned} &\quad\quad\begin{tabular}{c|c|cc}
 $\frac{1}{2}-\frac{\sqrt{3}}{6}$&$I$&$\frac{1}{4}$&$\frac{3-2\sqrt{3}}{12}$\\[3pt]
  $\frac{1}{2}+\frac{\sqrt{3}}{6}$&$I$&$\frac{3+2\sqrt{3}}{12}$&$\frac{1}{4}$ \\[3pt]
 \hline
  $\raisebox{-1.3ex}[1.0pt]{$e^{-hM}$}$ & $\raisebox{-1.3ex}[1.0pt]{$\bar{w}_4(z)$}$&$\raisebox{-1.3ex}[1.0pt]{$\frac{1}{2}$}$&$\raisebox{-1.3ex}[1.0pt]{$\frac{1}{2}$}$  \\
\end{tabular}.
\end{aligned}
\end{equation}

In view of  \eqref{SVERK24} and \eqref{MVERK24},  two methods  reduce to the classical implicit  fourth-order  RK method with two stages by Hammer and Hollingsworth (see, e.g. \cite{Hammer1955}) once $M\rightarrow \mathbf{0}$.  It should be noted that  \eqref{SVERK24} and \eqref{MVERK24} use the Jacobian
matrix and Hessian matrix of $f(y)$ with respect to $y$ at each step, however, to our knowledge, the idea for stiff
problems is no by means new (see, e.g. \cite{Abhulimen2014,Cash1981Siam,Enright1974siam}). Using the Gaussian points $c_1=\frac{1}{2}-\frac{\sqrt{3}}{6}$ and $c_2=\frac{1}{2}+\frac{\sqrt{3}}{6}$, the ERK method of collocation type with two stages has been formulated by Hochbruck et al. \cite{Hochbruck2005b}, which can be represented by the Butcher tableau
\begin{equation}\label{Tableau-ERK24}
\begin{aligned} &\quad\quad\begin{tabular}{c|cc}
 $\frac{1}{2}-\frac{\sqrt{3}}{6}$&$\frac{\sqrt{3}}{6}\varphi_1(-c_1hM)-\sqrt{3}c_1^2\varphi_2(-c_1hM)$&$-\sqrt{3}c_1^2\varphi_1(-c_1hM)+\sqrt{3}c_1^2\varphi_2(-c_1hM)$\\[2pt]
  $\frac{1}{2}+\frac{\sqrt{3}}{6}$&$\sqrt{3}c_2^2\varphi_1(-c_2hM)-\sqrt{3}c_2^2\varphi_2(-c_2hM)$& $-\frac{\sqrt{3}}{6}\varphi_1(-c_2hM)+\sqrt{3}c_2^2\varphi_2(-c_2hM)$ \\[2pt]
 \hline
 &$\raisebox{-1.3ex}[1.0pt]{$\sqrt{3}c_2\varphi_1(-hM)-\sqrt{3}\varphi_2(-hM)$}$&$\raisebox{-1.3ex}[1.0pt]{$-\sqrt{3}c_1\varphi_1(-hM)+\sqrt{3}\varphi_2(-hM)$}$  \\
\end{tabular}.
\end{aligned}
\end{equation}
 It is clear that the coefficients of (\ref{Tableau-ERK24}) are  matrix-valued functions $\varphi_k(-hM)$ $(k>0)$, their implementation depends on  the computing or approximating the product of a matrix exponential function
with a vector. As we consider the variable stepsize technique, the coefficients of these exponential integrators  are needed to recalculate at each step.   In contrast,  the coefficients of SVERK and MVERK methods are the real constants, their operations can  reduce the computation of matrix exponentials to some extent. On the other hand, compared with standard RK methods,  these ERK methods possess some properties of matrix exponentials and exactly integrate the homogeneous linear equation \eqref{homogeneous}.

\section{Linear stability}\label{sec4}

In what follows we investigate the linear stability properties of   IMMVERK and IMSVERK methods. For classical RK methods, the linear stability  analysis  is related to the Dahlquist test equation \cite{Hairer2006}
$$y^{\prime}=\lambda y,\ \lambda \in \mathbb R.$$   When we consider the exponential integrators,  the stability properties of an exponential method  are analyzed by  applying the method to the partitioned Dalquist equation \cite{Buvoli2022}
\begin{equation}\label{test equation}
y^{\prime}=i\lambda_1y+i\lambda_2y, \qquad y(t_0)=y_0, \qquad \qquad \lambda_1, \ \lambda_2 \in \mathbb R.
\end{equation}
i.e.,  solving (\ref{test equation}) by a partitioned exponential integrator, and  treating the $i\lambda_1$
exponentially and the $i\lambda_2$ explicitly, then we have the explicit  scalar form
\begin{equation}\label{stability function}
y_{n+1}=R(ik_1,ik_2)y_n, \quad \quad \quad k_1=h\lambda_1, \ k_2=h\lambda_2.
\end{equation}
\begin{defn} For an exponential method with $R(ik_1,ik_2)$ given by \eqref{stability function},
the set $S$ of the stability function $R(ik_1,ik_2)$
$$S=\big\{(k_1,k_2)\in \mathbb R^2 : |R(ik_1,ik_2)|\leq1 \big \},$$
 is called the stability region of  the exponential method.
\end{defn}

Applying  the IMMVERK  method \eqref{MVERK22}   and  IMSVERK method  \eqref{SVERK21}  to \eqref{test equation}, the  stability regions of   implicit second-order ERK methods  are respectively depicted in Fig. \ref{second} (b) and (c), and we also plot the stability region of the explicit second-order MVERK method  with two stages  (see, e.g. \cite{Wang2022}) in Fig. \ref{second} (a). For fourth-order ERK methods, we select the IMMVERK method \eqref{MVERK24}, IMSVERK  method \eqref{SVERK24}  and the explicit SVERK method with four stages (see, e.g. \cite{Hu2022}) to make a comparison,  the  stability regions of these methods are shown in Fig. \ref{fourth}. It can be observed that  the implicit exponential methods possess the comparable stability regions than explicit exponential methods.

 \begin{figure}[!htb]
\centering
\begin{tabular}[c]{cccc}%
  \subfigure[]{\includegraphics[width=5.2cm,height=4.7cm]{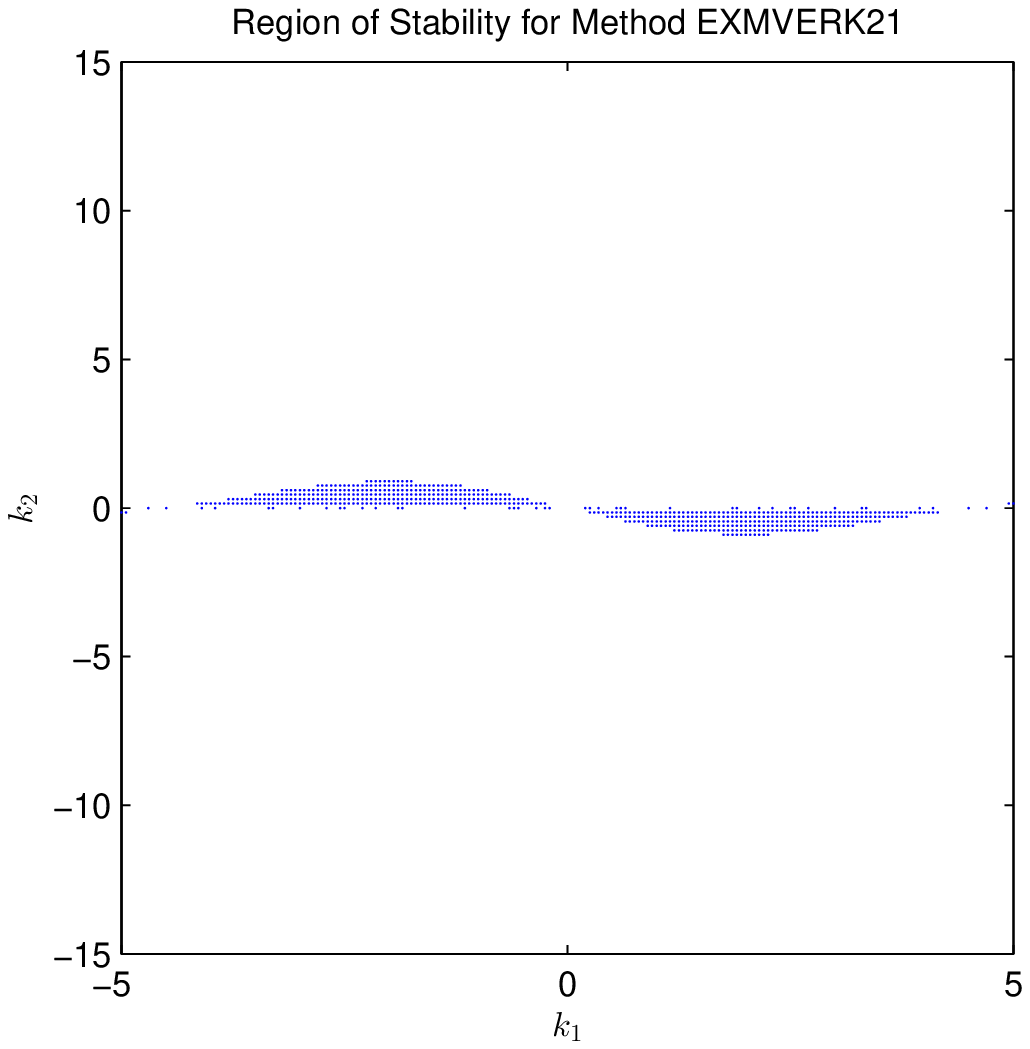}}
  \subfigure[]{\includegraphics[width=5.2cm,height=4.7cm]{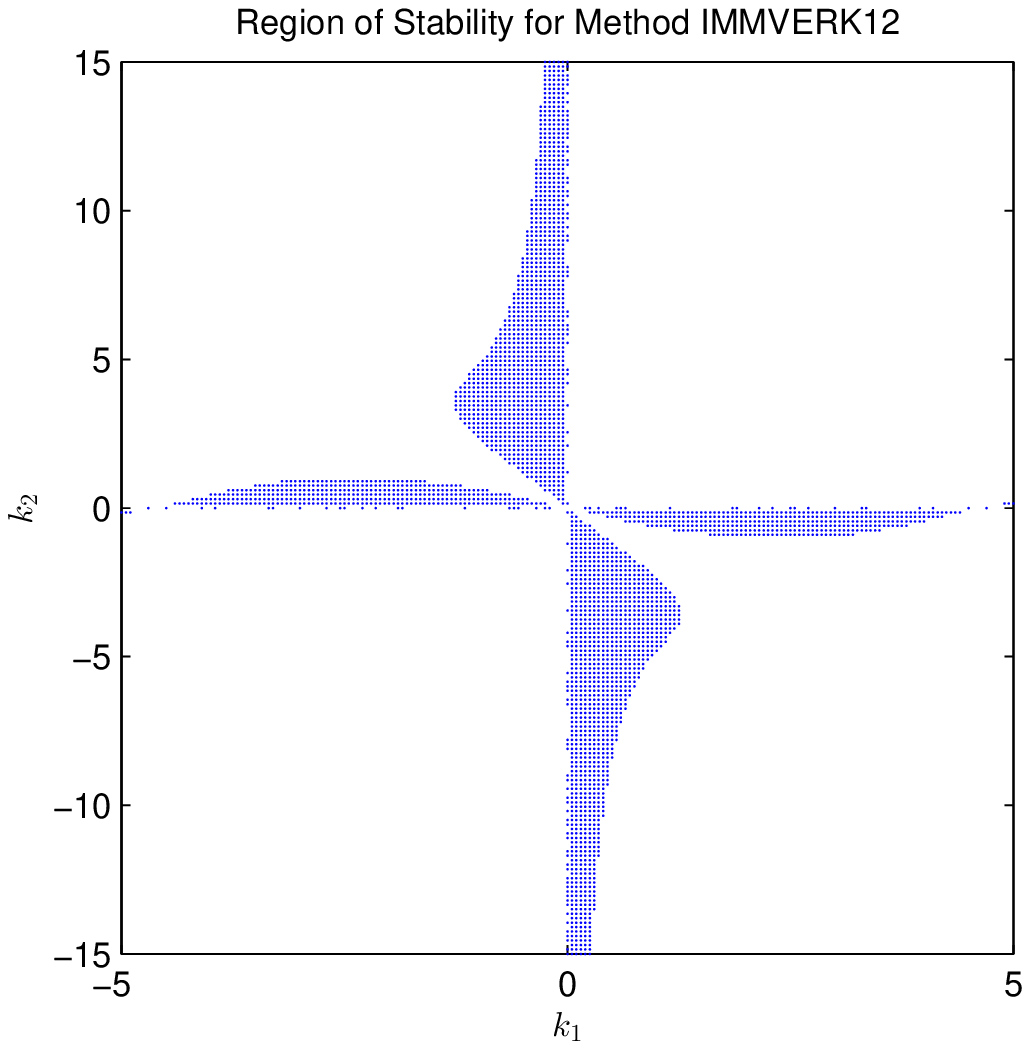}}
  \subfigure[]{\includegraphics[width=5.2cm,height=4.7cm]{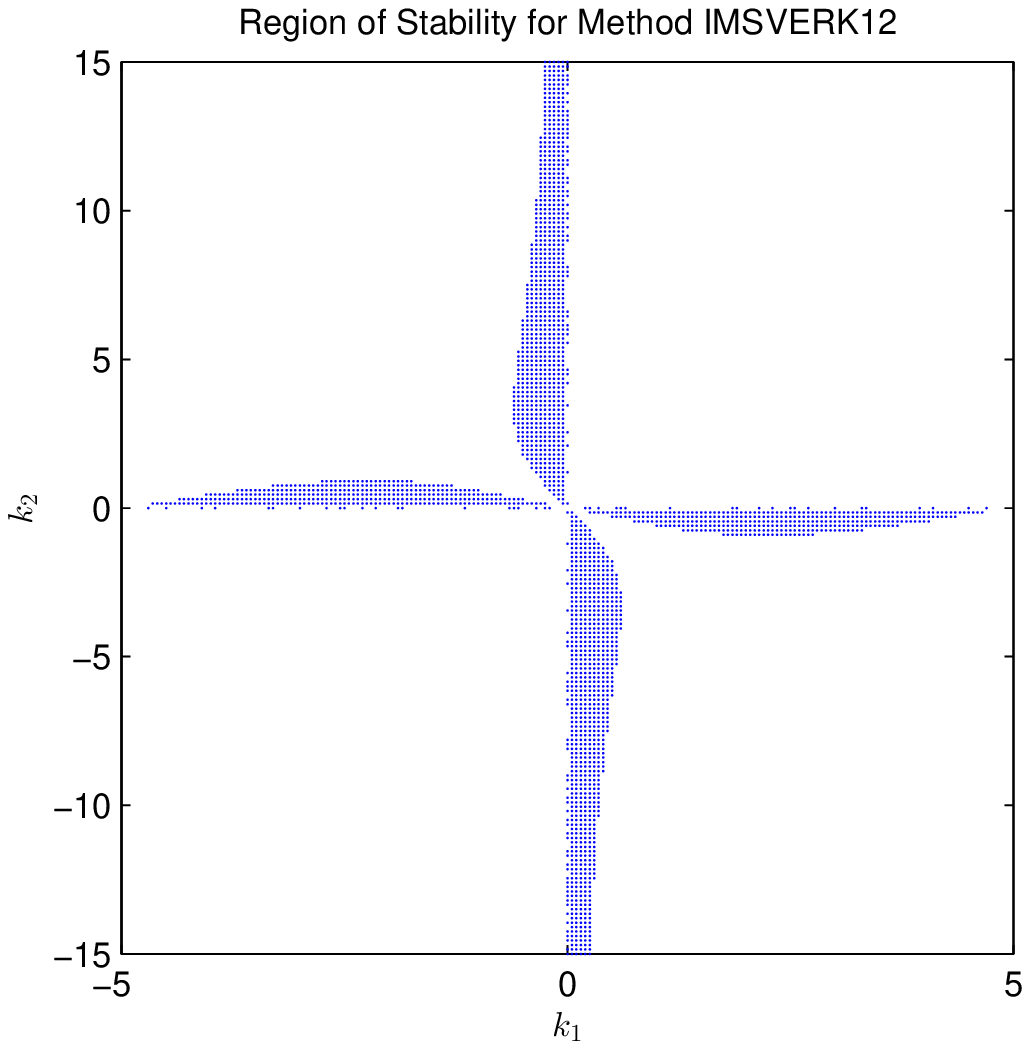}}
\end{tabular}
\caption{ {\bf {(a)}}: Stability region for explicit second-order  MVERK (EXMVERK21) method with two stages. {\bf {(b)}} Stability region for implicit second-order
 MVERK (IMMVERK12) method.  {\bf {(c)}}: Stability region for implicit second-order  SVERK (IMSVERK12) method. }\label{second}
\end{figure}

 \begin{figure}[!htb]
\centering
\begin{tabular}[c]{cccc}%
  \subfigure[]{\includegraphics[width=5.2cm,height=4.7cm]{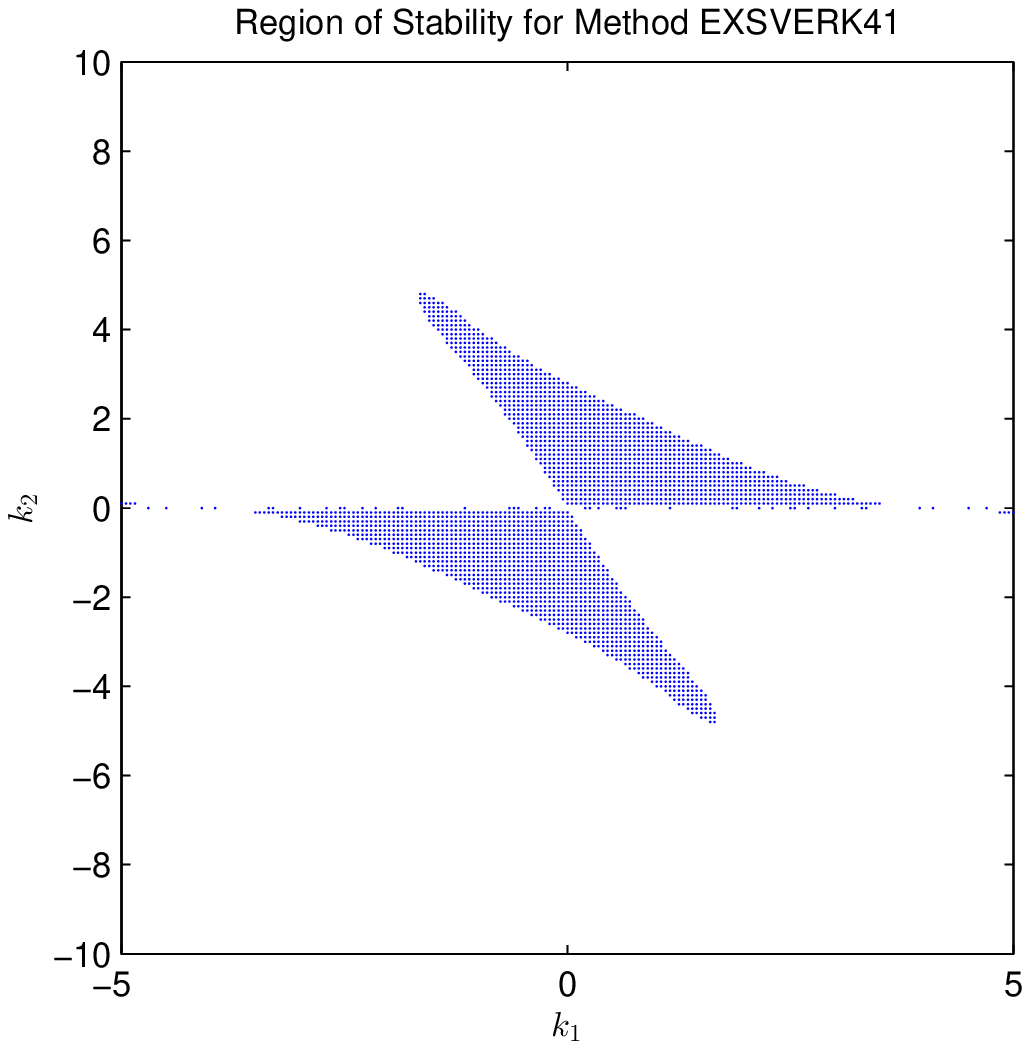}}
  \subfigure[]{\includegraphics[width=5.2cm,height=4.7cm]{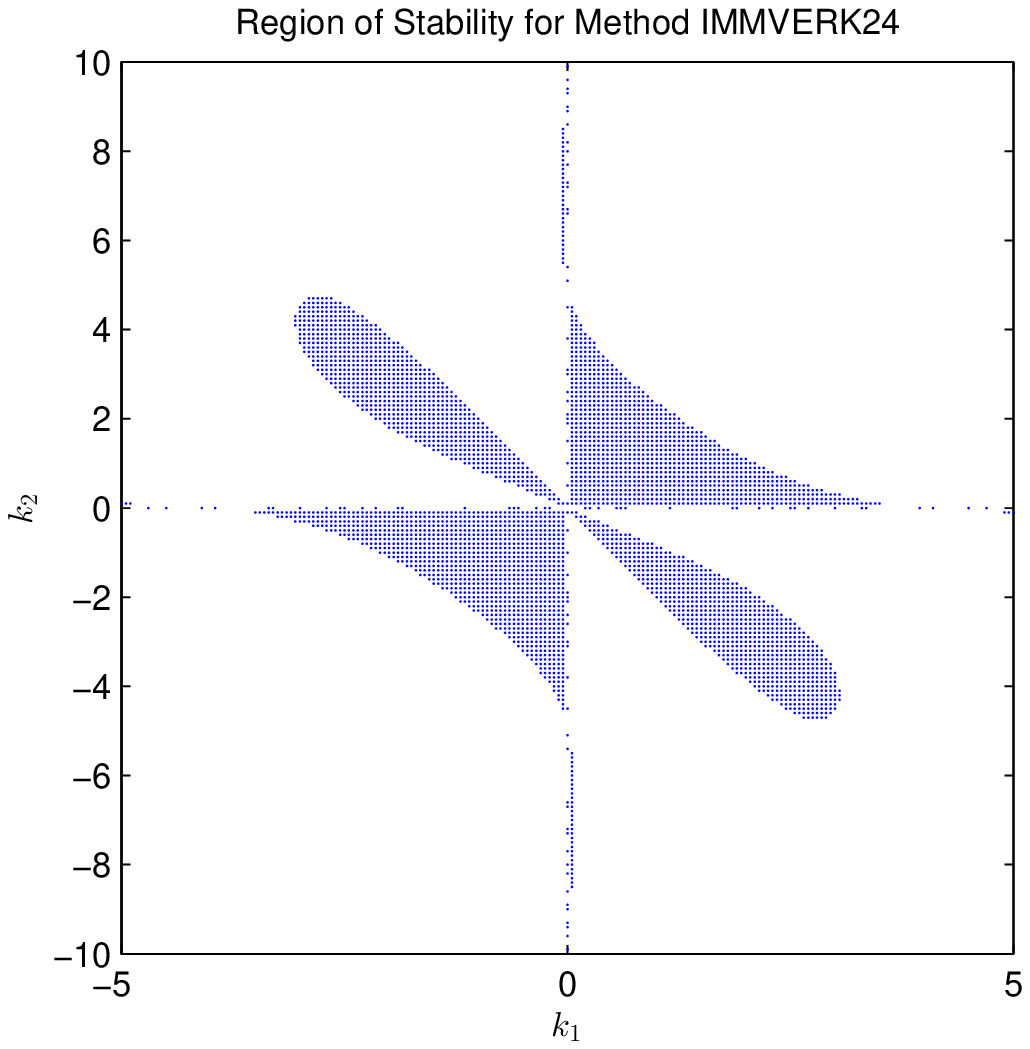}}
  \subfigure[]{\includegraphics[width=5.2cm,height=4.7cm]{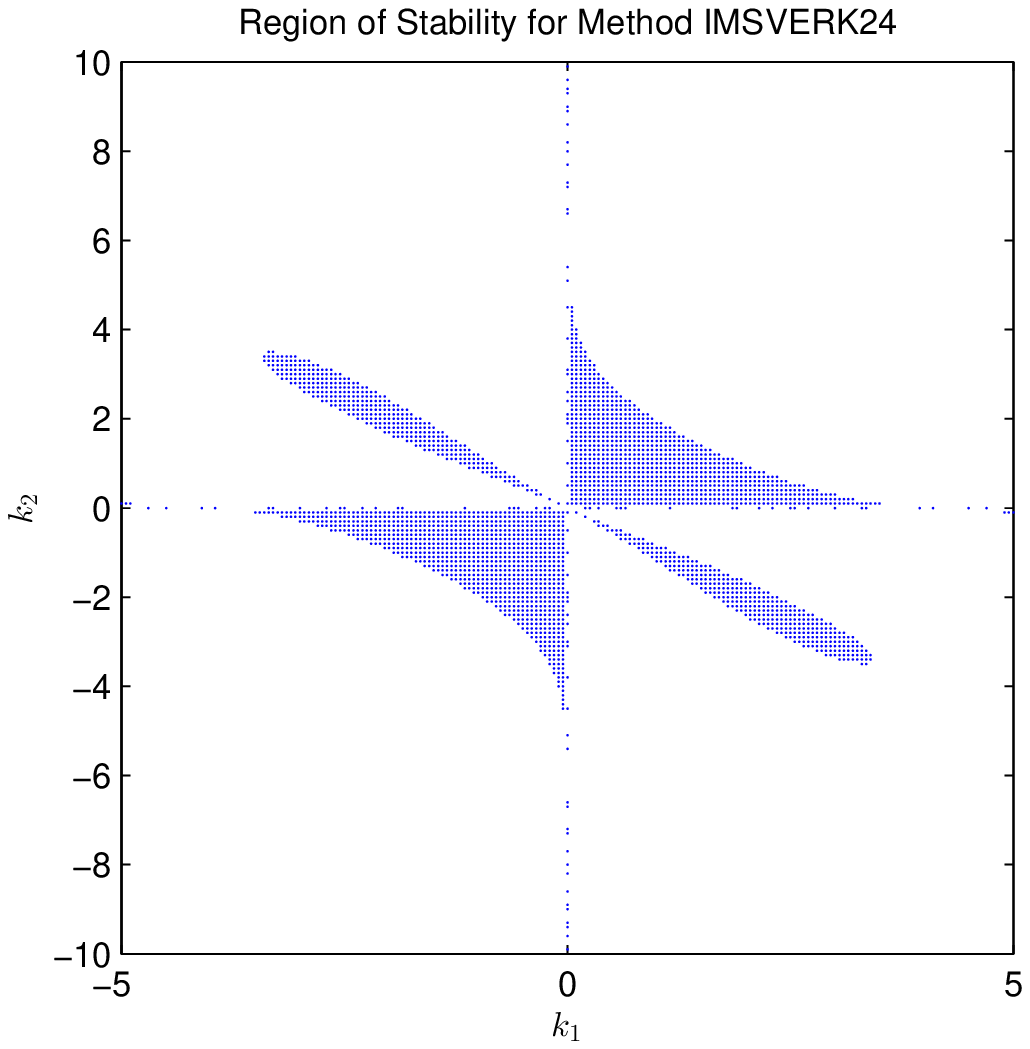}}
\end{tabular}
\caption{ {\bf {(a)}}: Stability region for explicit  fourth-order  SVERK (EXSVERK41) method with four stages. {\bf {(b)}}
 Stability region for  implicit fourth-order  MVERK (IMMVERK24) method.  {\bf {(c)}}: Stability region for implicit fourth-order  SVERK (IMSVERK24) method.}\label{fourth}
\end{figure}

\section{Numerical Experiments}\label{sec5}

 In this section, we respectively apply the symplectic method (\ref{symplectic SVERK1-1}) and  highly accurate IMSVERK and IMMVERK methods to solve some highly
 oscillatory problems. In order to illustrate the accuracy and efficiency of these exponential methods, we select the standard implicit ERK methods to make comparisons. Since these methods are implicit, we use the fixed-point iteration, and
 the iteration will be stopped once the norm of the difference of two successive approximations is smaller than $10^{-14}$.  The matrix-valued functions $\varphi_k(-hM)$ $(k>0)$ are evaluated by the Krylov subspace method \cite{Berland2007}, which possesses the fast convergence.  In all numerical experiments, we take the Eucildean norm for global errors (GE) and denote the global error of Hamiltonian by GEH.   The following numerical methods are chosen for comparison:

\begin{itemize}   \item First-order   methods: \begin{itemize}
  \item IMSVERK1: the  1-stage implicit SVERK   method \eqref{symplectic SVERK1-1}  of order (non-stiff) one presented in this paper;
  \item EEuler: the  1-stage explicit exponential Euler method  of order (stiff) one proposed in \cite{Hochbruck2005a};
    \item IMEEuler: the  1-stage implicit exponential Euler  method of order (stiff) one proposed in \cite{Hochbruck2005b}.
\end{itemize}
\item Second-order methods:
\begin{itemize}
  \item IMMVERK12: the  1-stage implicit MVERK  method \eqref{MVERK22}  of order (non-stiff) two presented in this paper;
      \item IMSVERK12: the  1-stage implicit SVERK  method  \eqref{SVERK21}  of order (non-stiff) two presented in this paper;
        \item IMERK12: the  1-stage implicit ERK  method \eqref{Tableau-ERK12} of order (stiff) two proposed in \cite{Hochbruck2005b}.
\end{itemize}
\item Fourth-order methods:
\begin{itemize}
  \item IMMVERK24: the 2-stage implicit MVERK method \eqref{MVERK24} of order (non-stiff) four presented in this paper;
      \item IMSVERK24: the 2-stage implicit SVERK method  \eqref{SVERK24}  of order (non-stiff) four presented in this paper;
        \item IMERK24: the  2-stage implicit ERK  method \eqref{Tableau-ERK24}   proposed in \cite{Hochbruck2005b}.
\end{itemize}

\end{itemize}

\begin{problem}\label{Henon}
 The H\'{e}non-Heiles Model  is used to describe the stellar motion (see, e.g. \cite{Hairer2006}), which has the following identical form
\begin{equation*}
\begin{aligned}& \left(
                   \begin{array}{c}
                     x_1 \\
                      x_2 \\
                      y_1 \\
                      y_2\\
                   \end{array}
                 \right)
'+\left(
    \begin{array}{cccc}
      0 & 0 & -1 &0\\
      0 & 0 & 0  &-1\\
      1 & 0 & 0  &0 \\
      0 & 1 & 0  &0\\
    \end{array}
  \right)\left(
                   \begin{array}{c}
                     x_1 \\
                      x_2 \\
                      y_1\\
                      y_2\\
                   \end{array}
                 \right)=
\left(
  \begin{array}{c}
  0\\
  0\\
  -2x_1x_2\\
  -x_1^2+x_2^2\\
                                                                           \end{array}
                                                                         \right).
\end{aligned}\end{equation*}
The Hamiltonian  of this system is given by
 \begin{equation*}
 H(x,y)=\frac{1}{2}(y_1^2+y_2^2)+\frac{1}{2}(x_1^2+x_2^2)+x_1^2x_2-\frac{1}{3}x_2^3.
 \end{equation*}
 We select the initial values as
 \begin{equation*}
 \big(x_1(0),x_2(0),y_1(0),y_2(0)\big)^{\intercal}=(\sqrt{\frac{11}{96}},0,0,\frac{1}{4})^{\intercal}.
 \end{equation*}
 At first, Fig. \ref{HHfirst} (a) presents  the problem is solved on the interval $[0,10]$  with stepsizes $h=1/2^{k},\ k=2,\ldots,6$ for IMSVERK1s1, EEuler, IMEEuler, it can be observed that the IMSVERK1s1 method has higher accuracy than first-order exponential Euler methods.  We integrate this problem  with  stepsize  $h=1/30$ on the interval $[0,100]$, the  relative errors $RGEH=\frac{GEH}{H_0}$ of Hamiltonian energy   for IMSVERK1s1, EEuler and IMEEuler are presented by  Fig. \ref{HHfirst} (b) and Fig. \ref{HHenergy}, which reveals the structure-preserving properties of  the symplectic method.   Finally, we integrate  this system  over the interval $[0,10]$ with
 stepsizes $h=1/2^{k}$ for $k=2,\ldots,6$, Figs. \ref{HHsecond} and \ref{HHfourth} display the global errors (GE) against the stepszies and the CPU time (seconds) for IMMVERK12, IMSVERK12, IMERK12, IMMVERK24,  IMSVERK24,  IMERK24. We observe that our methods present the comparable accuracy and efficiency.
\begin{figure}[!htb]
\centering
\begin{tabular}[c]{cccc}%
  \subfigure[]{\includegraphics[width=6cm,height=5.4cm]{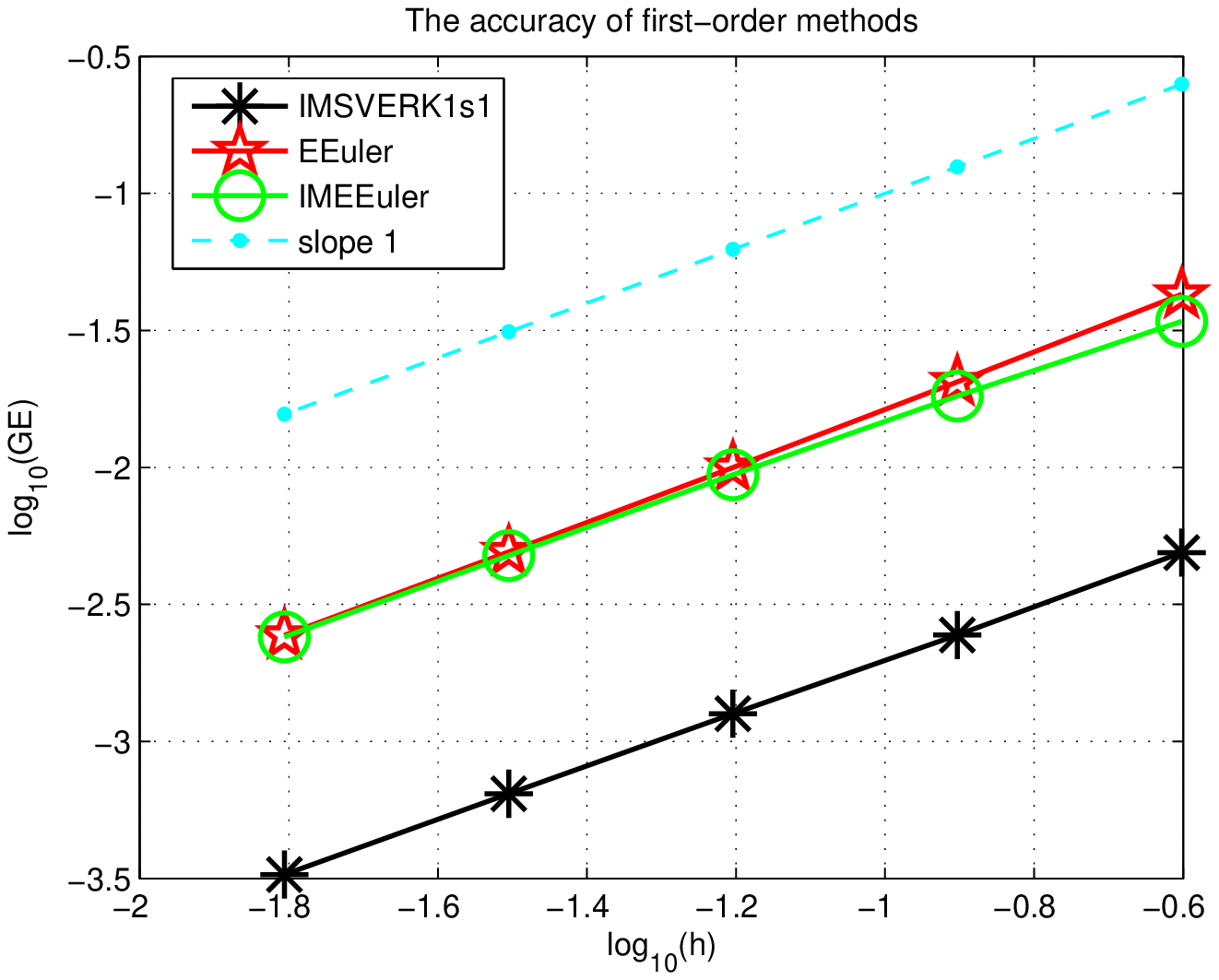}}
  \subfigure[]{\includegraphics[width=6cm,height=5.4cm]{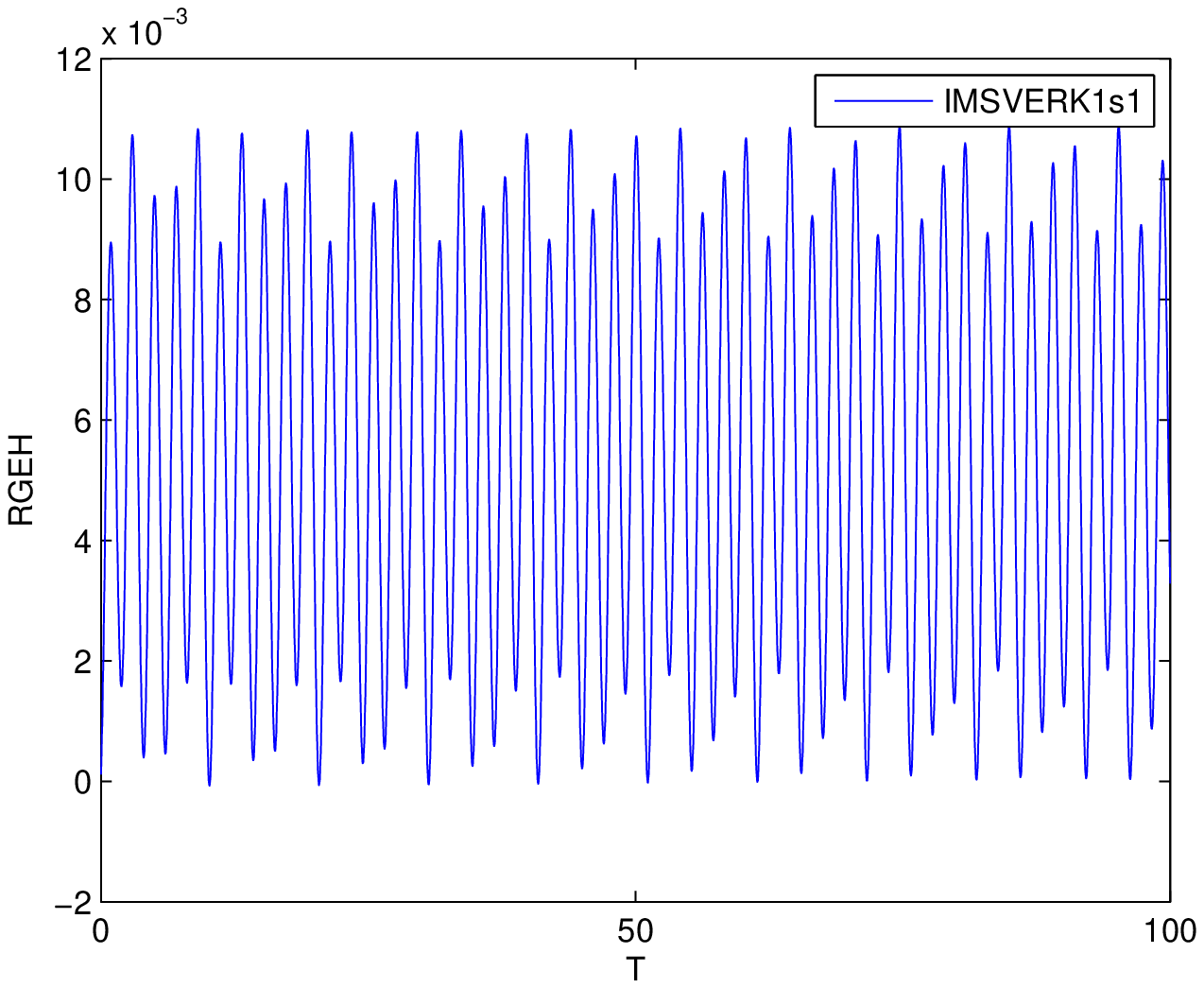}}
\end{tabular}
\caption{Results for  Problem \ref{Henon}. {\bf {(a)}}: The $\log$-$\log$ plots of global
errors (GE) against $h$. {\bf {(b)}}: the energy preservation for method IMSVERK1s1.}\label{HHfirst}
\end{figure}

\begin{figure}[!htb]
\centering
\begin{tabular}[c]{cccc}%
  \subfigure[]{\includegraphics[width=6cm,height=5.7cm]{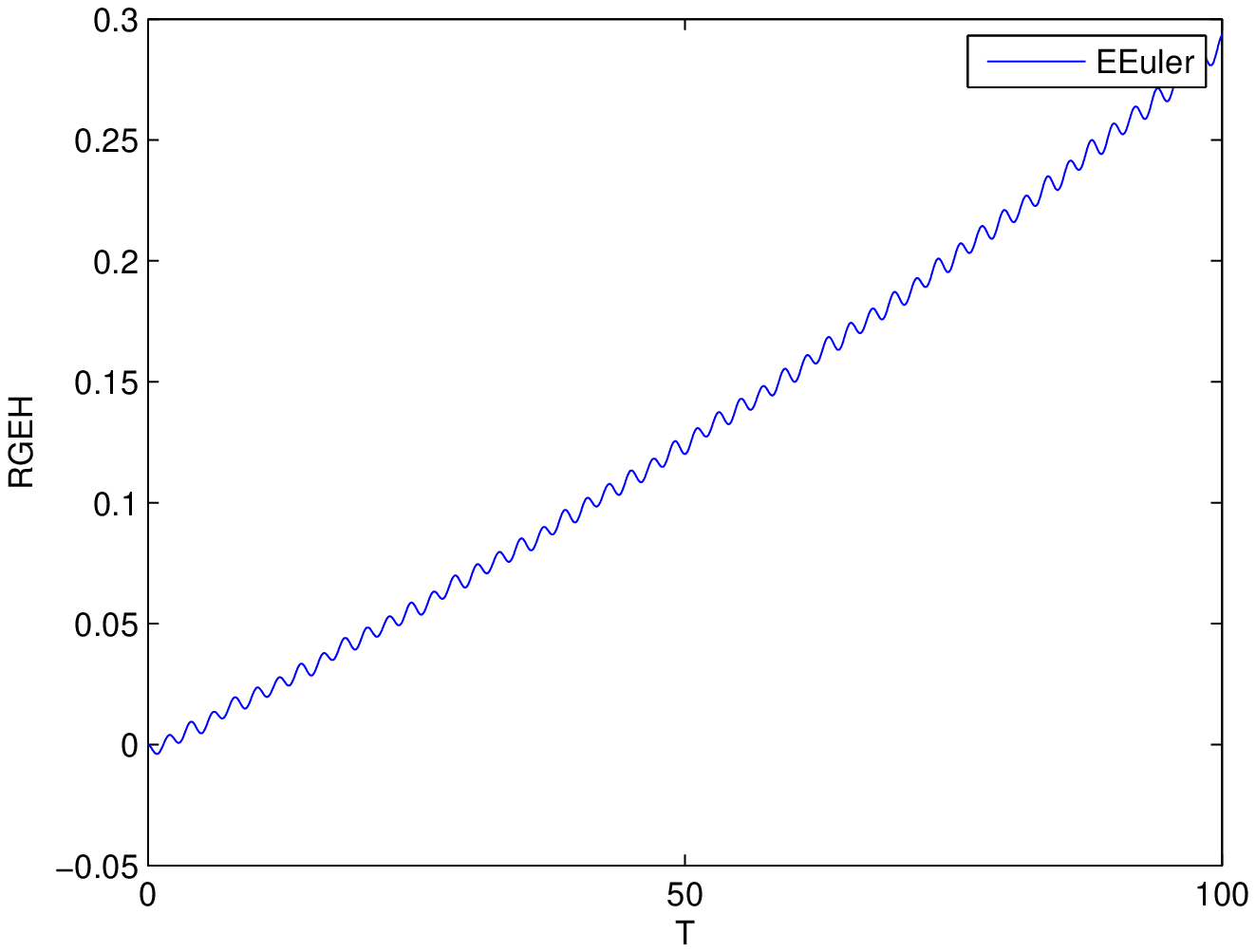}}
  \subfigure[]{\includegraphics[width=6cm,height=5.7cm]{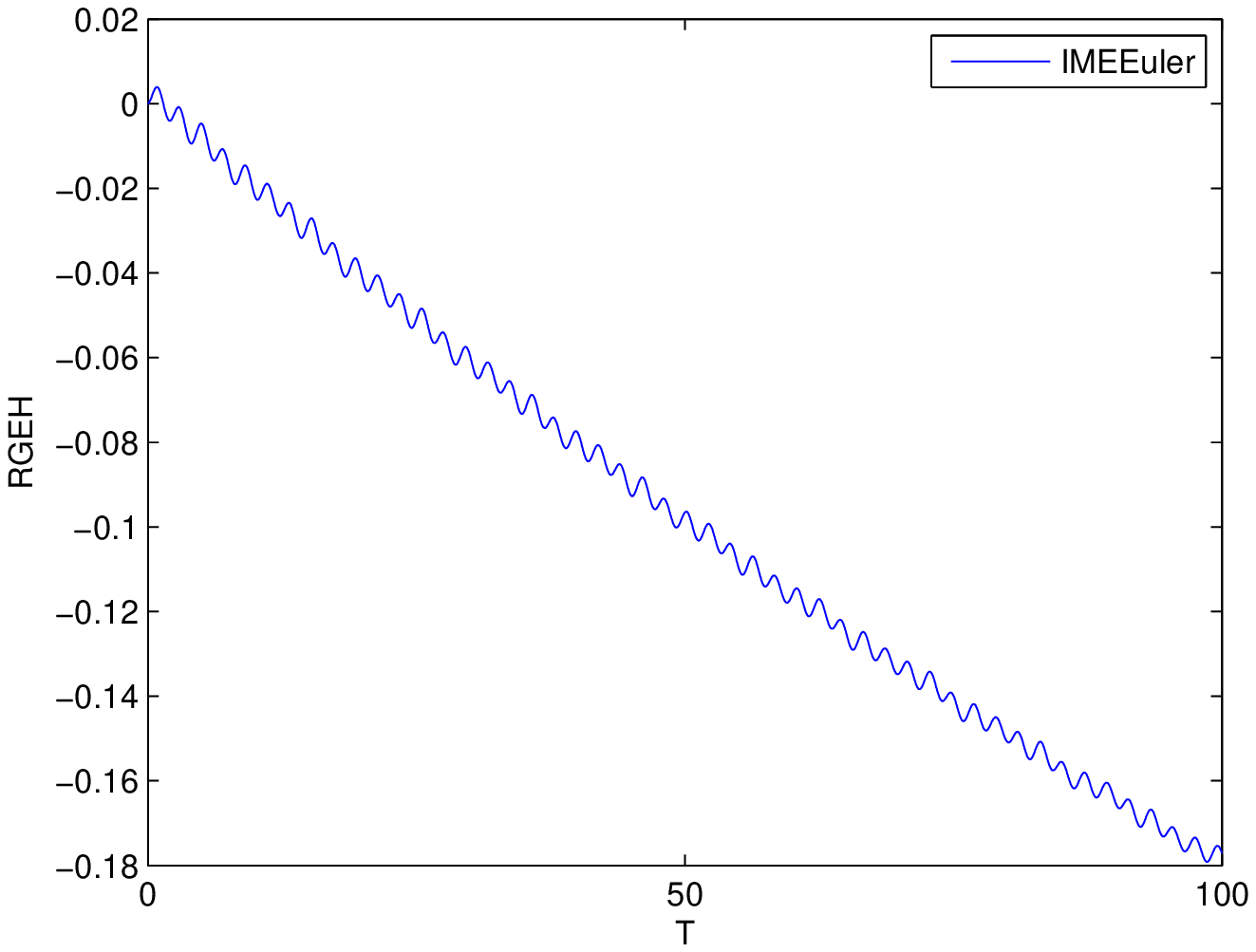}}
\end{tabular}
\caption{Results for  Problem \ref{Henon}. : the energy preservation for methods EEuler {\bf {(a)}} and IMEEuler {\bf {(b)}}.}\label{HHenergy}
\end{figure}

\begin{figure}[!htb]
\centering
\begin{tabular}[c]{cccc}%
  \subfigure[]{\includegraphics[width=6cm,height=5.7cm]{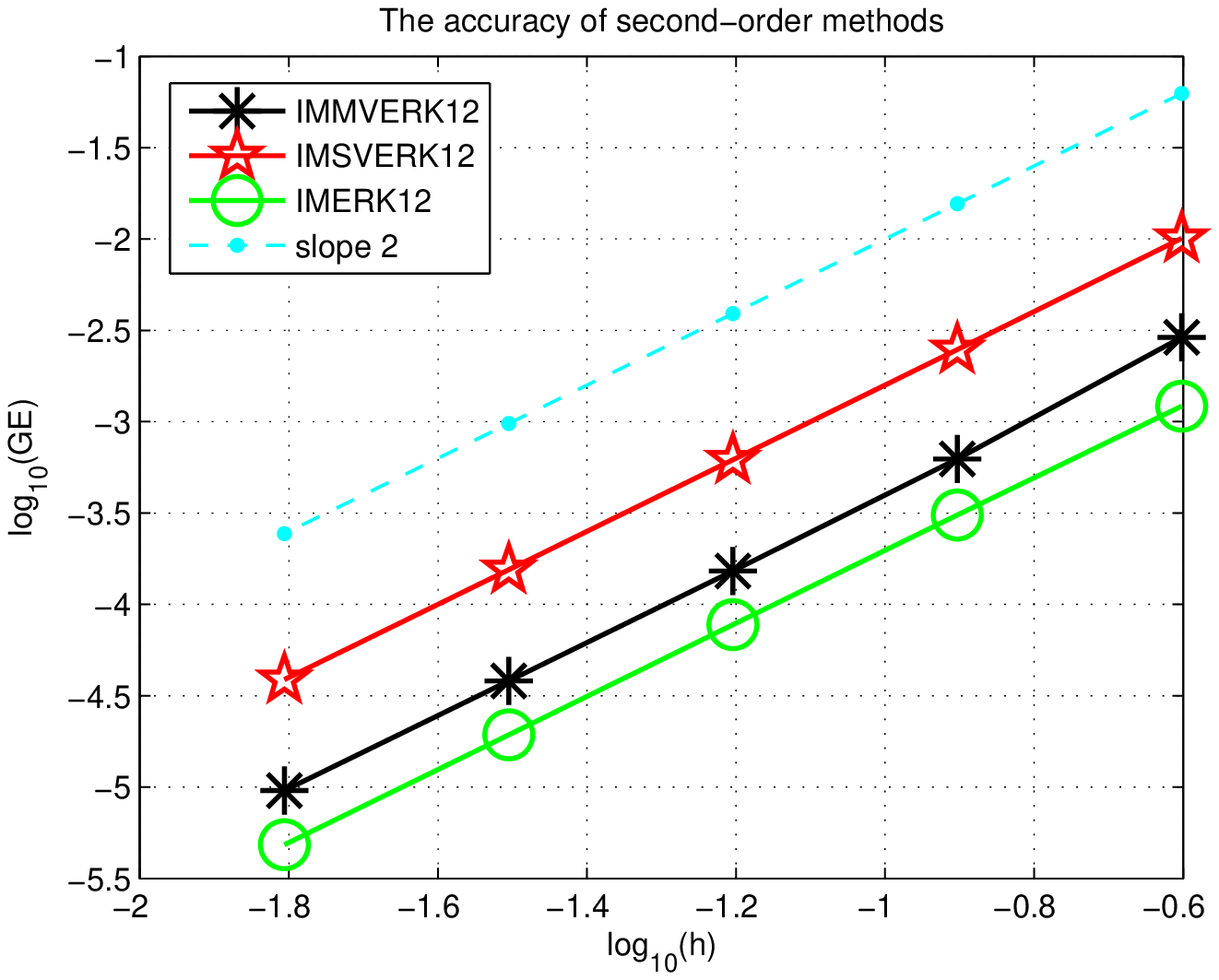}}
  \subfigure[]{\includegraphics[width=6cm,height=5.7cm]{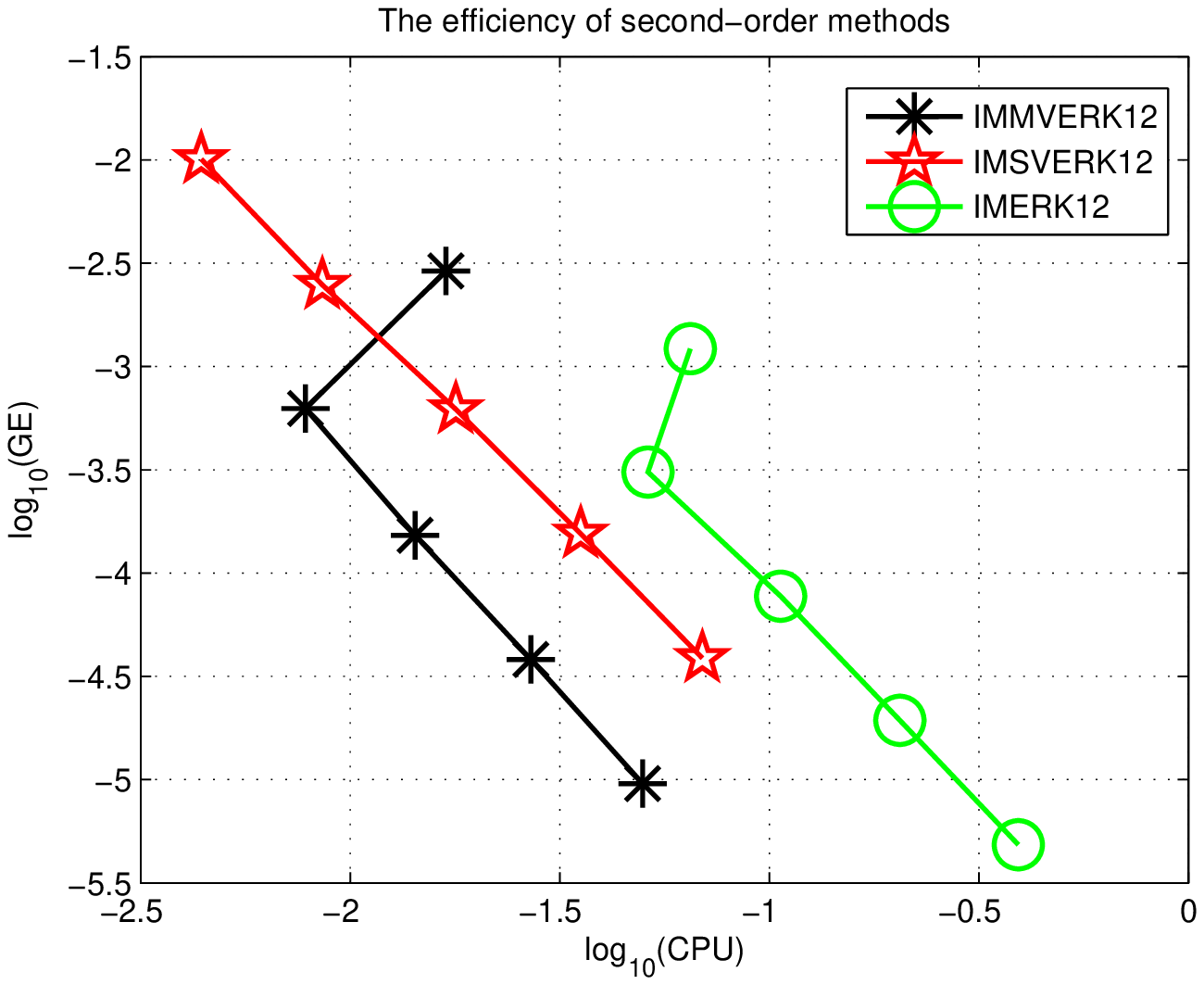}}
\end{tabular}
\caption{Results for  Problem \ref{Henon}. {\bf {(a)}}: The $\log$-$\log$ plots of global
errors (GE) against $h$. {\bf {(b)}}: The $\log$-$\log$ plots of global
errors against the CPU time.}\label{HHsecond}
\end{figure}

\begin{figure}[!htb]
\centering
\begin{tabular}[c]{cccc}%
  \subfigure[]{\includegraphics[width=6cm,height=5.7cm]{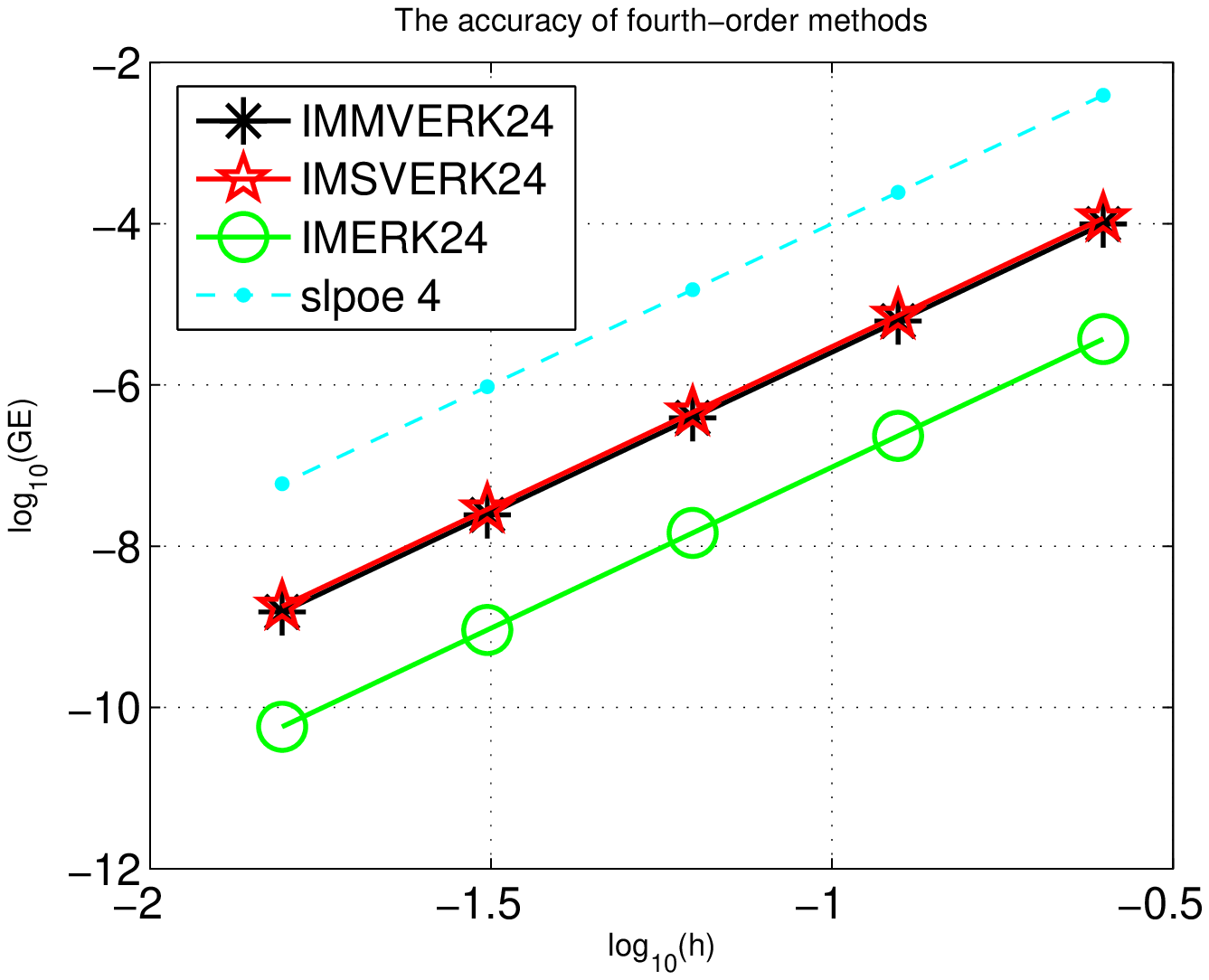}}
  \subfigure[]{\includegraphics[width=6cm,height=5.7cm]{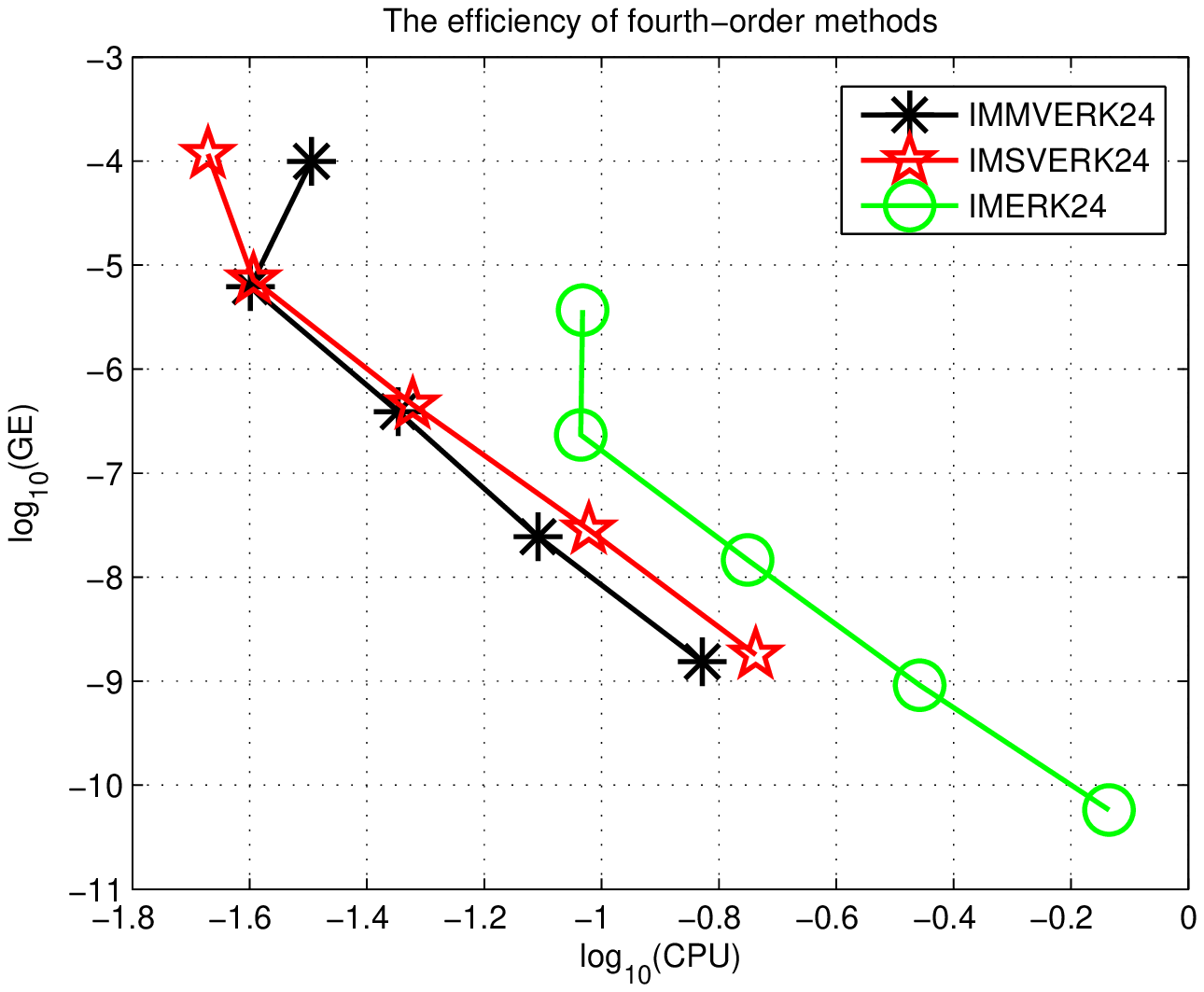}}
\end{tabular}
\caption{Results for  Problem \ref{Henon}. {\bf {(a)}}: The $\log$-$\log$ plots of global
errors (GE) against $h$. {\bf {(b)}}: The $\log$-$\log$ plots of global
errors against the CPU time.}\label{HHfourth}
\end{figure}
\end{problem}

\begin{problem} \label{Duffing} Consider the Duffing equation \cite{Mei2017}
\begin{equation}\left\{
\begin{array}{c}
\ddot{q}+\omega^2q=k^2(2q^3-q), \cr\noalign{\vskip4truemm} q(0)=0, \ \dot{q}(0)=\omega,
\end{array}
\right.
\end{equation}
where $0\leq k <\omega$.
\end{problem}

Set $p=\dot{q}$, $z=(p,q)^{\intercal}$. We rewrite the Duffing equation as
\begin{equation*}
\begin{aligned}& \left(
                   \begin{array}{c}
                    p\\
                     q\\
                   \end{array}
                 \right)
'+\left(
    \begin{array}{cc}
      0& \omega^2\\
       -1 & 0 \\
    \end{array}
  \right)\left(
                   \begin{array}{c}
                     p \\
                      q \\
                   \end{array}
                 \right)=
\left(
                                                                           \begin{array}{c}
                                                                          k^2(2q^3-q)\\
0
                                                                           \end{array}
                                                                         \right).
\end{aligned}\end{equation*}
 It is a Hamiltonian system with the Hamiltonian
$$H(p,q)=\frac{1}{2}p^2+\frac{1}{2}\omega^2q^2+\frac{k^2}{2}(q^2-q^4).$$
The exact solution of this problem is
$$q(t)=sn(\omega t; k/\omega),$$
where $sn$ is the Jacobian elliptic function.

Let $\omega=30$, $k=0.01$. This problem is solved on the interval $[0,100]$ with the stepsize $h=1/30$, Fig. \ref{DFfirst} (b) and
Fig. \ref{DFenergy} show the energy preservation behaviour for IMSVERK1s1, EEuler and IMEEuler.  We also integrate the system over the interval $[0,10]$ with stepsizes $h=1/2^k,\ k=4,\ldots,8$ for IMSVERK1s1, EEuler, IMEEuler, IMMVERK12, IMSVERK12, IMERK12, IMMVERK24,
IMSVERK24, IMERK24, which are shown in Figs \ref{DFfirst} (a),  \ref{DFsecond} and \ref{DFfourth}.

\begin{figure}[!htb]
\centering
\begin{tabular}[c]{cccc}%
  \subfigure[]{\includegraphics[width=6cm,height=5.7cm]{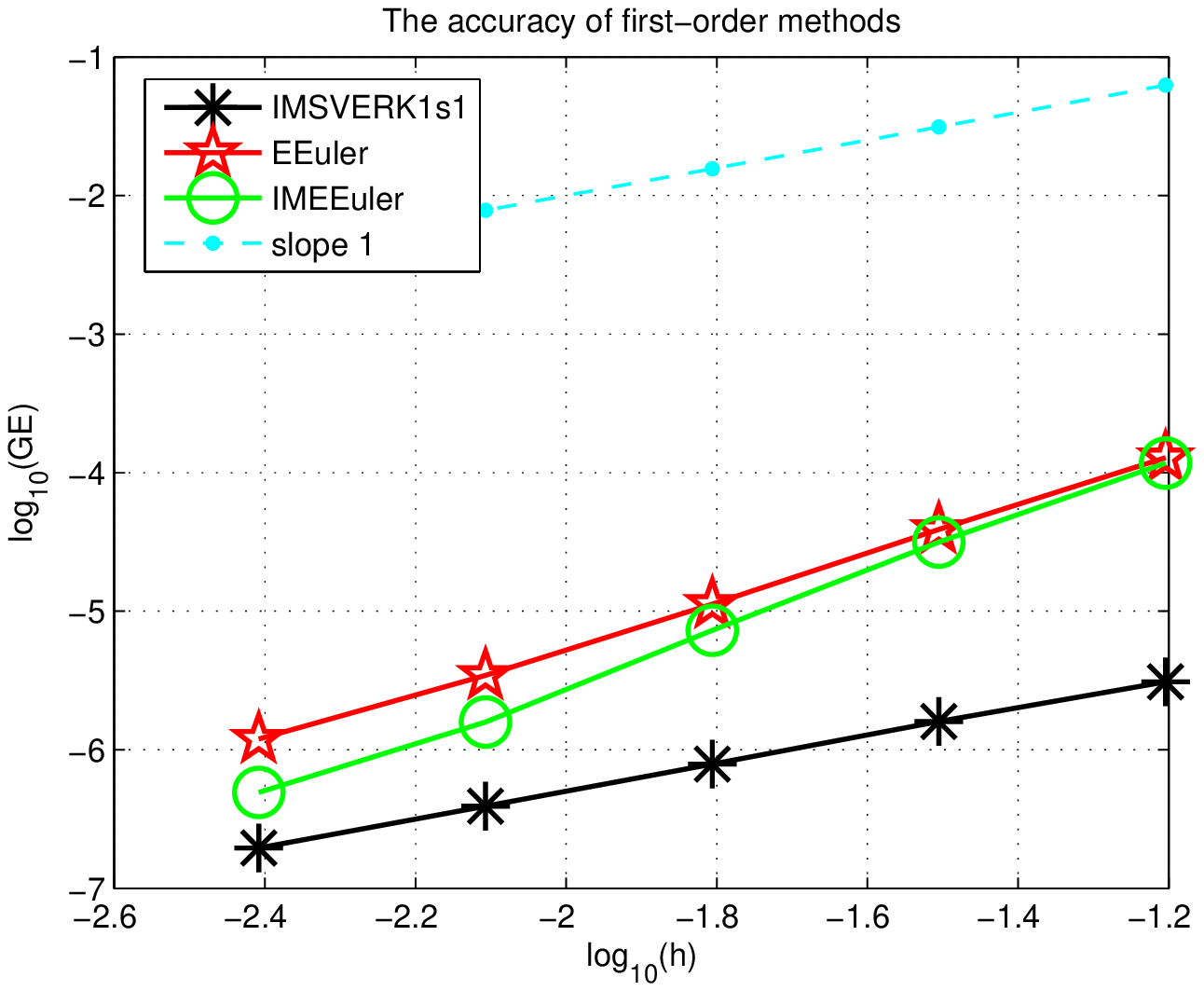}}
  \subfigure[]{\includegraphics[width=6cm,height=5.7cm]{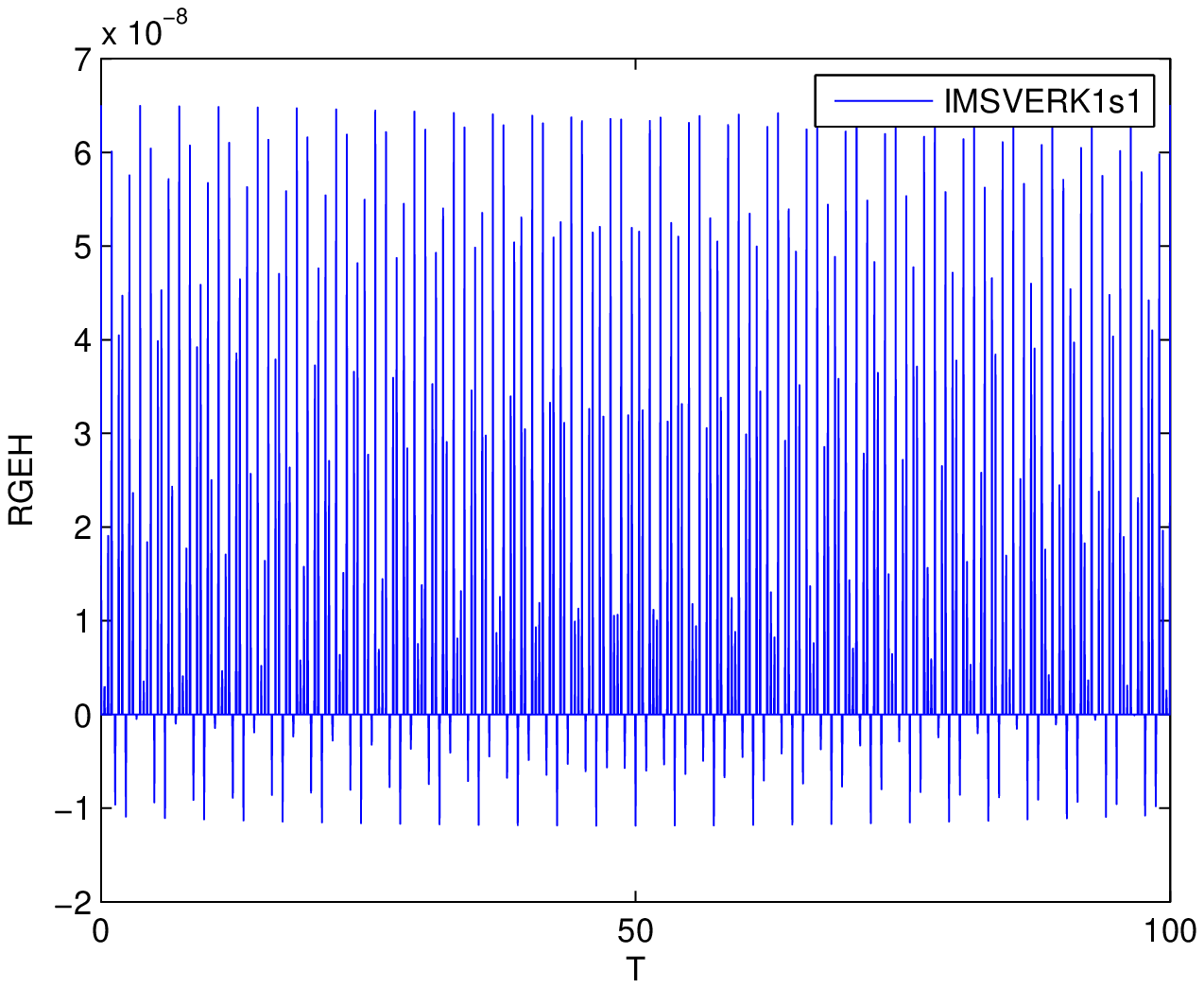}}
\end{tabular}
\caption{Results for  Problem \ref{Duffing}. {\bf {(a)}}: The $\log$-$\log$ plots of global
errors (GE) against $h$. {\bf {(b)}}: the energy preservation for method IMSVERK1s1.}\label{DFfirst}
\end{figure}

\begin{figure}[!htb]
\centering
\begin{tabular}[c]{cccc}%
  \subfigure[]{\includegraphics[width=6cm,height=5.7cm]{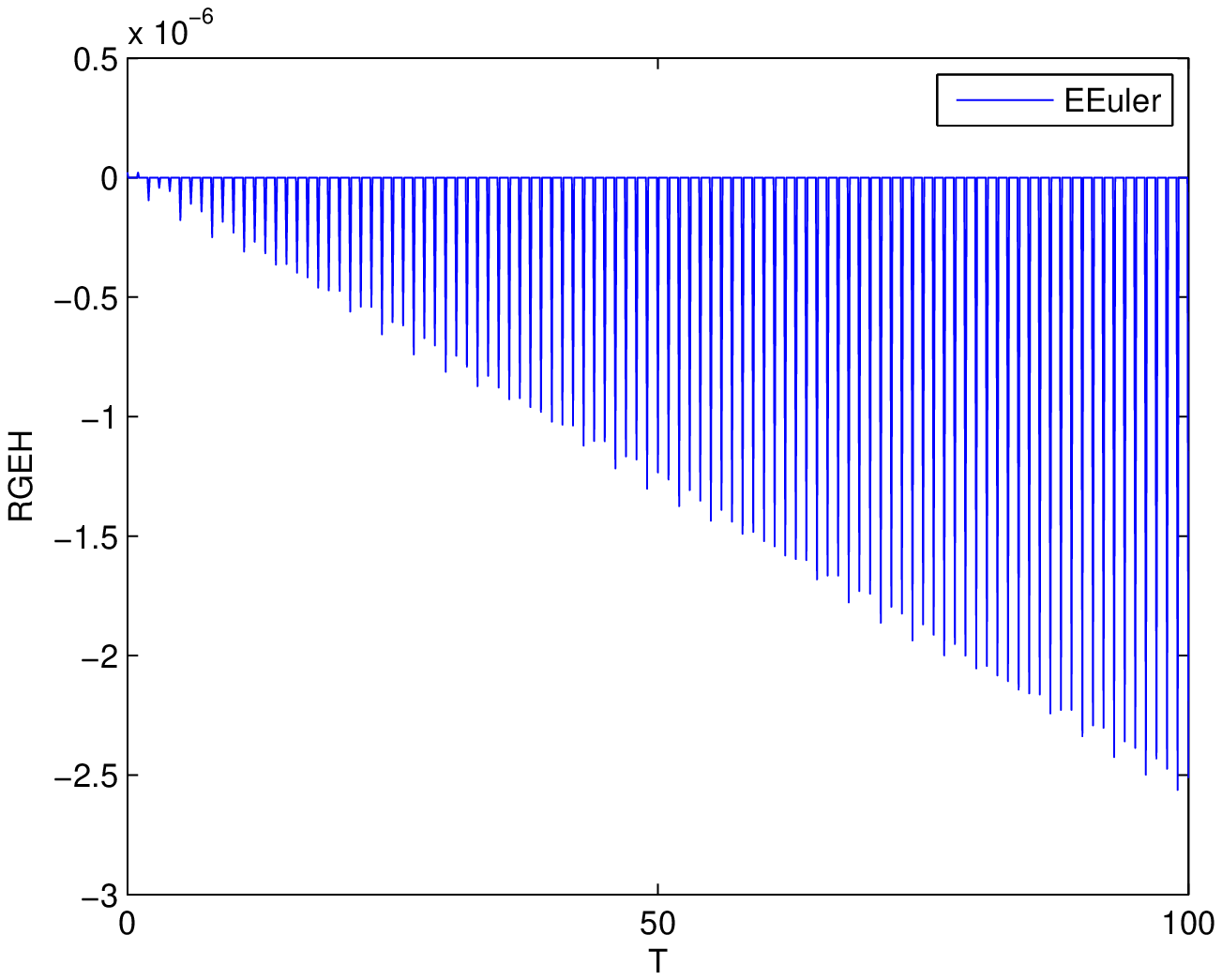}}
  \subfigure[]{\includegraphics[width=6cm,height=5.7cm]{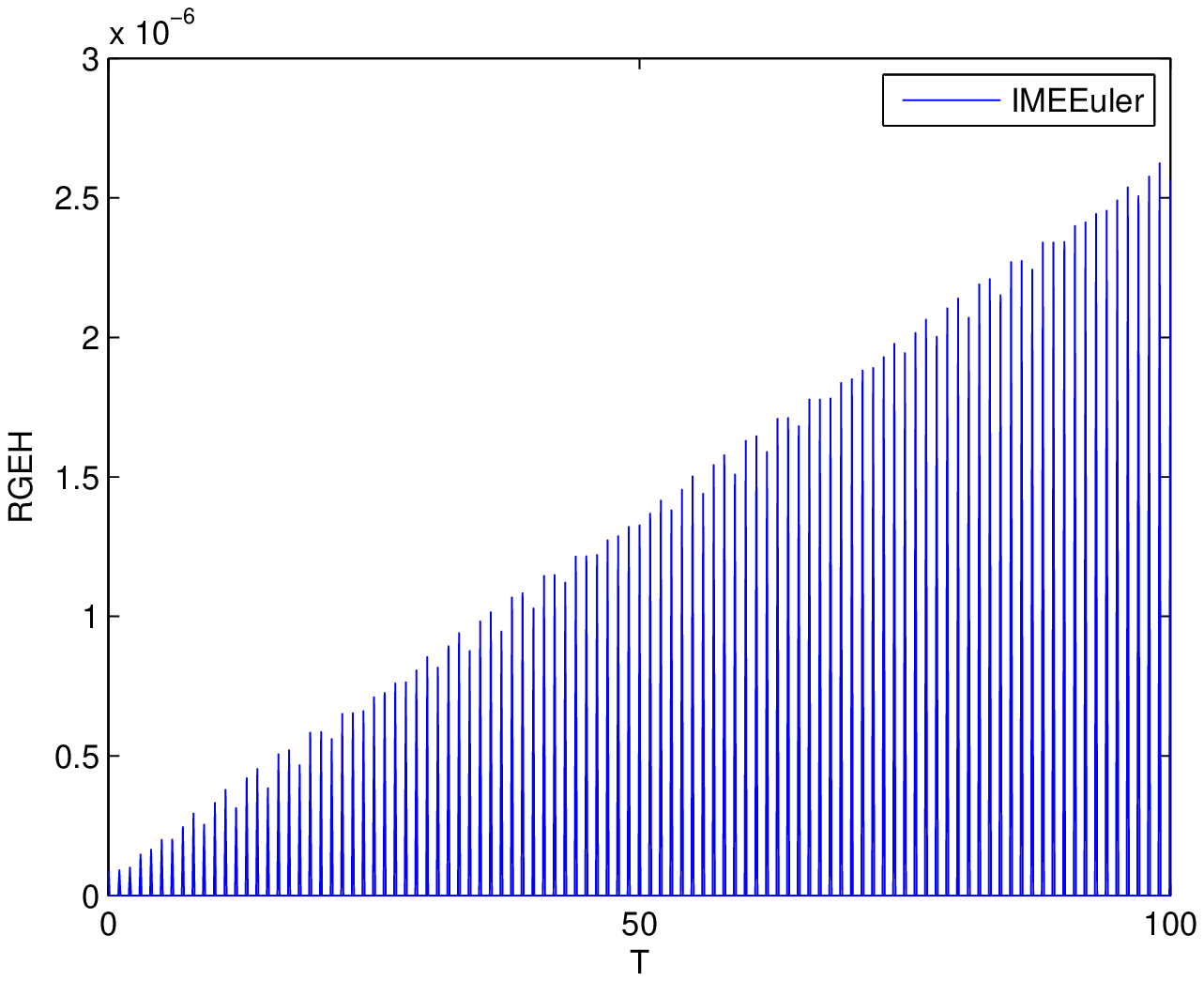}}
\end{tabular}
\caption{Results for  Problem \ref{Duffing}. : the energy preservation for methods EEuler {\bf {(a)}} and IMEEuler {\bf {(b)}}.}\label{DFenergy}
\end{figure}

\begin{figure}[!htb]
\centering
\begin{tabular}[c]{cccc}%
  \subfigure[]{\includegraphics[width=6cm,height=5.4cm]{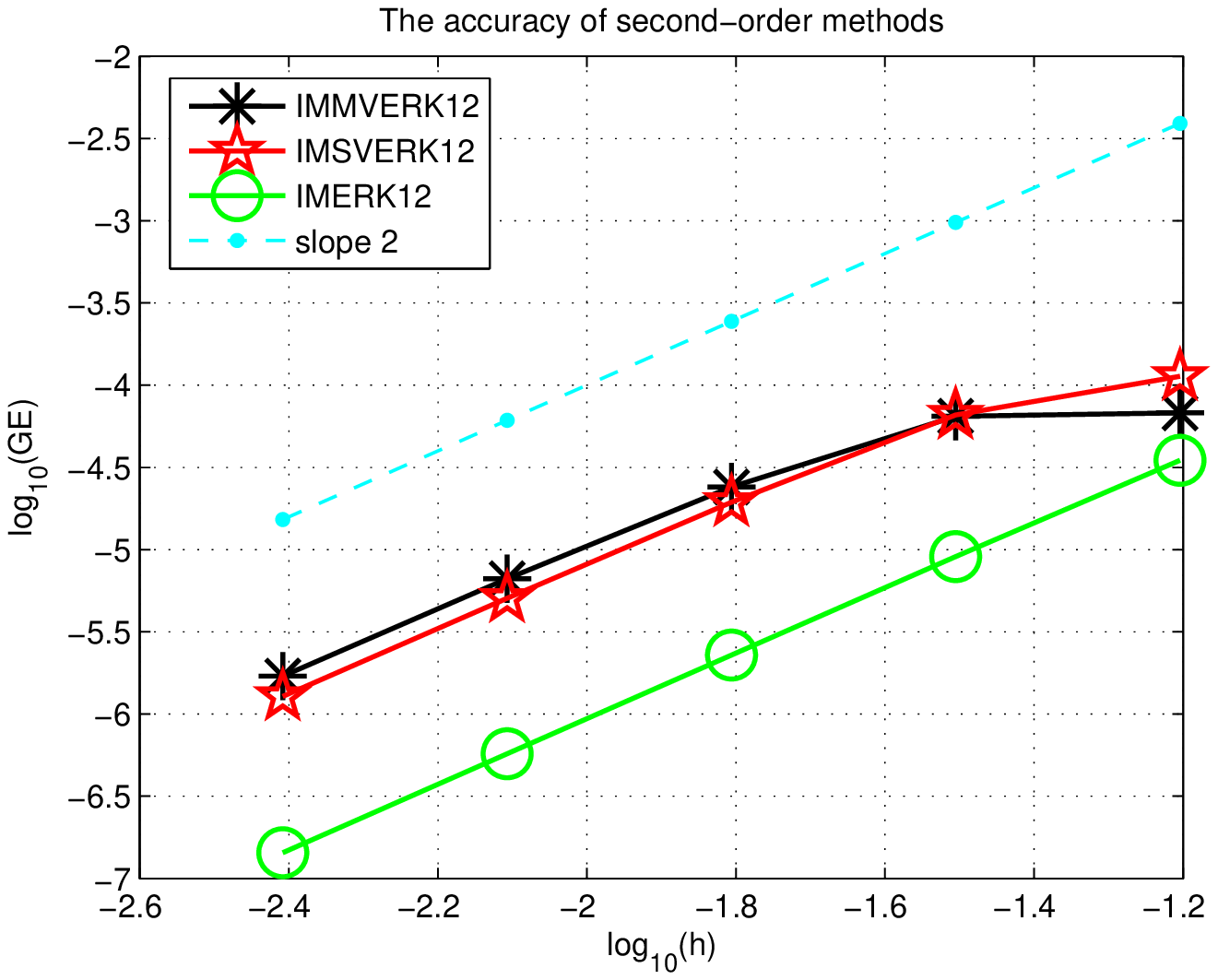}}
  \subfigure[]{\includegraphics[width=6cm,height=5.4cm]{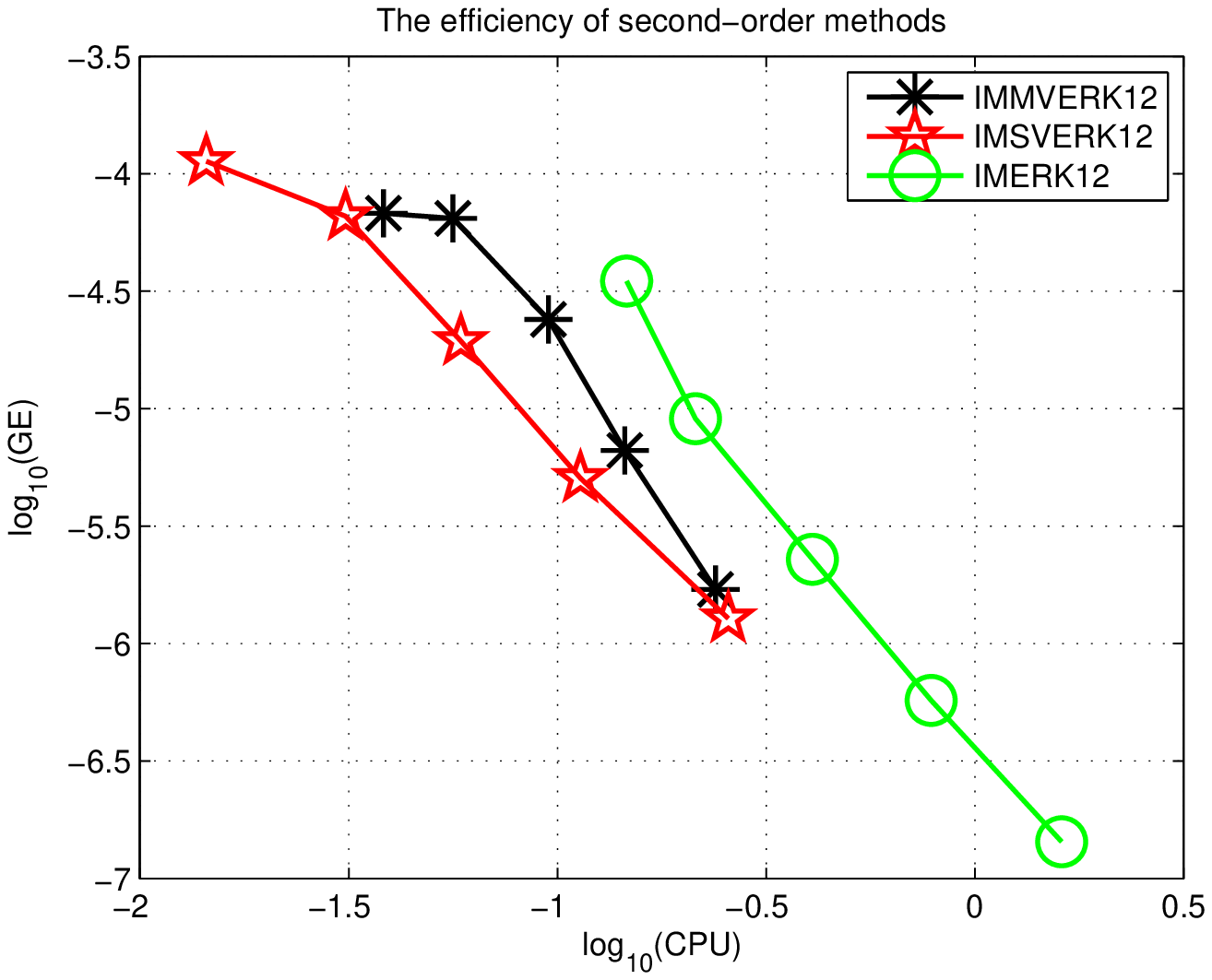}}
\end{tabular}
\caption{Results for  Problem \ref{Duffing}. {\bf {(a)}}: The $\log$-$\log$ plots of global
errors (GE) against $h$. {\bf {(b)}}: The $\log$-$\log$ plots of global
errors against the CPU time.}\label{DFsecond}
\end{figure}

\begin{figure}[!htb]
\centering
\begin{tabular}[c]{cccc}%
  \subfigure[]{\includegraphics[width=6cm,height=5.4cm]{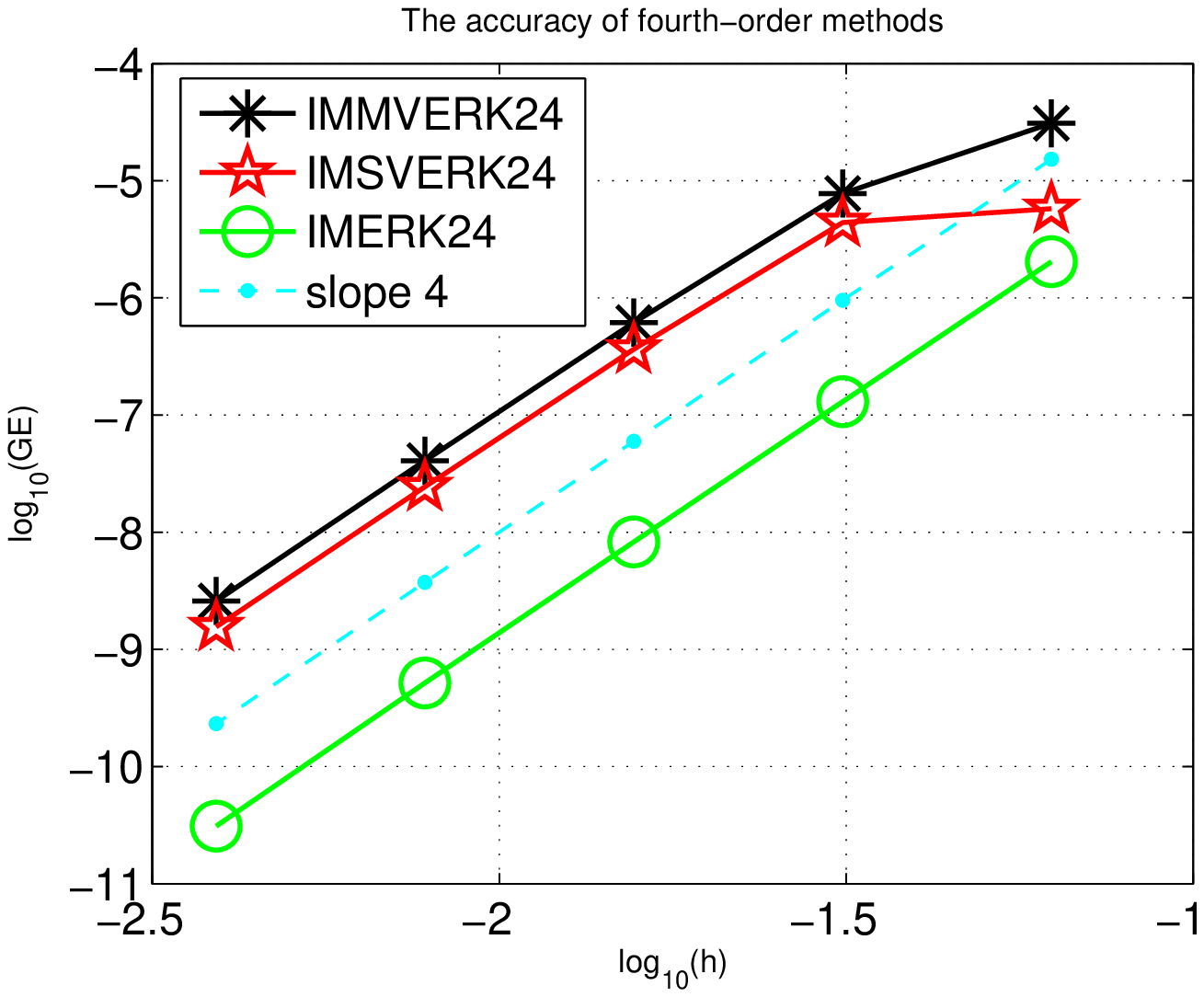}}
  \subfigure[]{\includegraphics[width=6cm,height=5.4cm]{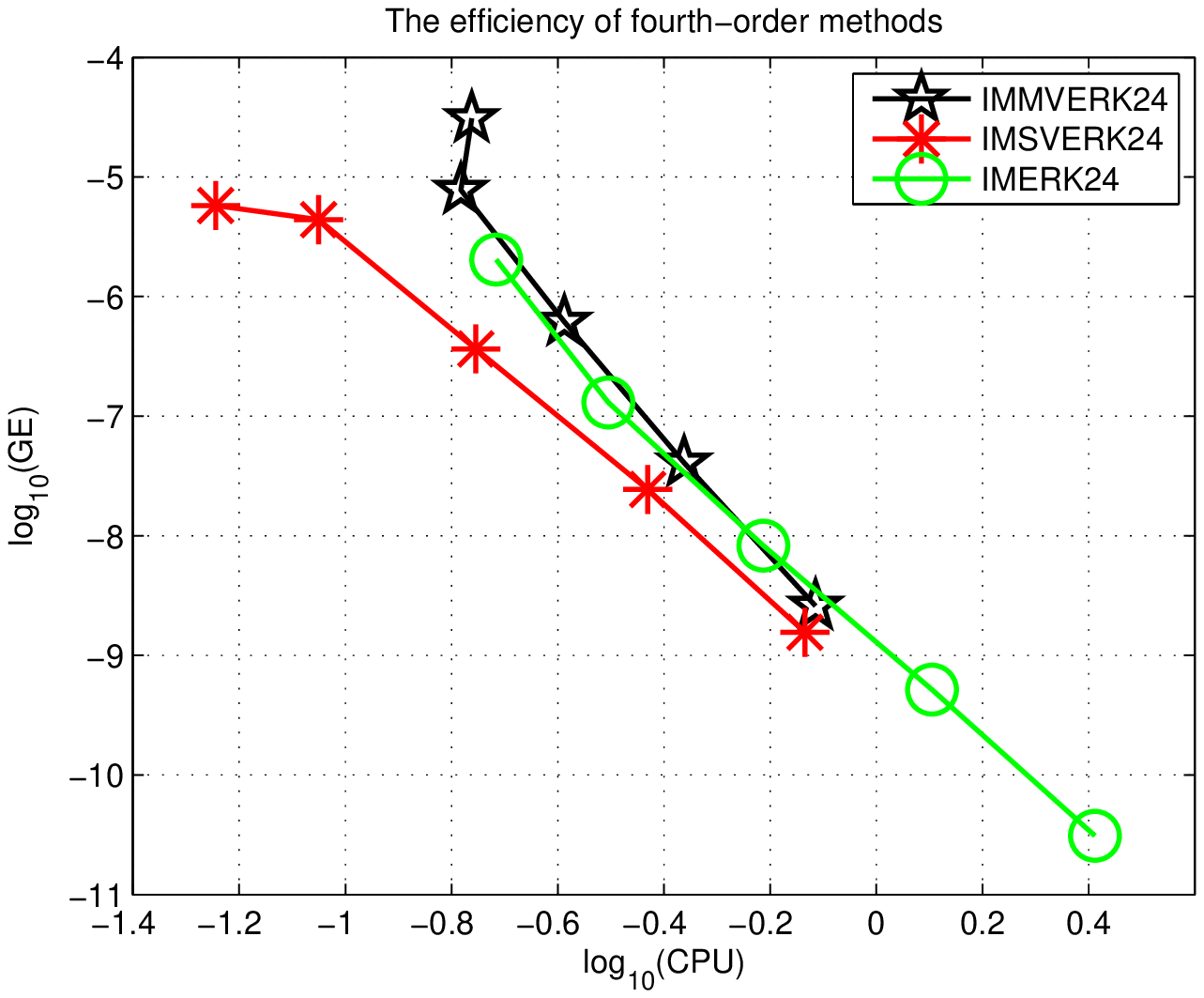}}
\end{tabular}
\caption{Results for  Problem \ref{Duffing}. {\bf {(a)}}: The $\log$-$\log$ plots of global
errors (GE) against $h$. {\bf {(b)}}: The $\log$-$\log$ plots of global
errors against the CPU time.}\label{DFfourth}
\end{figure}

\begin{problem} \label{Sine-Gordon}  Consider the sine-Gorden equation with
periodic boundary conditions \cite{Mei2017}
\[\left\{\begin{array}{l}
\dfrac{\partial^2u}{\partial t^2}=\dfrac{\partial^2u}{\partial
x^2}-\sin(u), \quad -1<x<1, \ t>0,\cr\noalign{\vskip1truemm}
u(-1,t)=u(1,t).\cr\noalign{\vskip1truemm}
\end{array}\right.\]
Discretising the spatial derivative $\partial_{xx}$ by the second-order symmetric differences, which leads to the  following Hamiltonian system
\[\frac{d}{dt}\left(\begin{array}[c]{c}
U^{\prime}\\
U\\
\end{array}\right)+\left(\begin{array}[c]{cc}
{\bf 0}&M\\
-I&{\bf 0}\\
\end{array}\right)\left(\begin{array}[c]{c}
U^{\prime}\\
U\\
\end{array}\right)=\left(\begin{array}[c]{c}
-\sin(U)\\
{\bf 0}\\
\end{array}\right), \quad t\in[0,t_{\rm end}],
\]
whose Hamiltonian is shown by
\begin{equation*}
H(U^{\prime},U)=\frac{1}{2}{U^{\prime}}^{\intercal}U^{\prime}+\frac{1}{2}U^{\intercal}MU^{\intercal}-(\cos u_1+\cdots+u_N).
\end{equation*}
In here, $U(t)=(u_1(t),\cdots,u_N(t))^T$ with $u_i(t)\approx u(x_i,t)$
{for $i=1,\ldots,N$,}  with $\Delta x=2/N$ and $x_i=-1+i\Delta x$,
$F(t,U)=-\sin(u)=-(\sin(u_1),\cdots,\sin(u_N))^T$, and
\[M=\dfrac{1}{\Delta x^2}
\left(
\begin{array}
[c]{ccccc}
2 & -1 &  &  &-1 \\
-1 & 2&  -1 &  &  \\
& \ddots & \ddots & \ddots &   \\
  &  & -1 & 2 & -1\\
  -1&  &   &  -1&2  \\
\end{array}
\right).
\]

In this test, we choose the initial value conditions
\[U(0)=(\pi)^N_{i=1}, \ U'(0)=\sqrt{N}\left(0.01+\sin(\dfrac{2\pi i}{N})\right)_{i=1}^N\]
with $N=48$, and solve the problem on the interval $[0,1]$ with stepsizes $h=1/2^k,\ k=4,\ldots,8$.
The global errors GE against the stepsizes and the CPU time (seconds)
for IMSVERK1s1, EEuler, IMEEuler, IMMVERK12, IMSVERK12, IMERK12, IMMVERK24,
IMSVERK24, IMERK24, which are respectively presented in  Figs \ref{SGfirst} (a),  \ref{SGsecond} and \ref{SGfourth}. Then we integrate this problem on the interval $[0,100]$ with stepsize $h=1/40$,   the energy preservation behaviour for IMSVERK1s1, EEuler, IMEEuler  are shown in Fig. \ref{SGfirst} (b) and Fig. \ref{SGenergy}.
\end{problem}

\begin{figure}[!htb]
\centering
\begin{tabular}[c]{cccc}%
  \subfigure[]{\includegraphics[width=6cm,height=5.4cm]{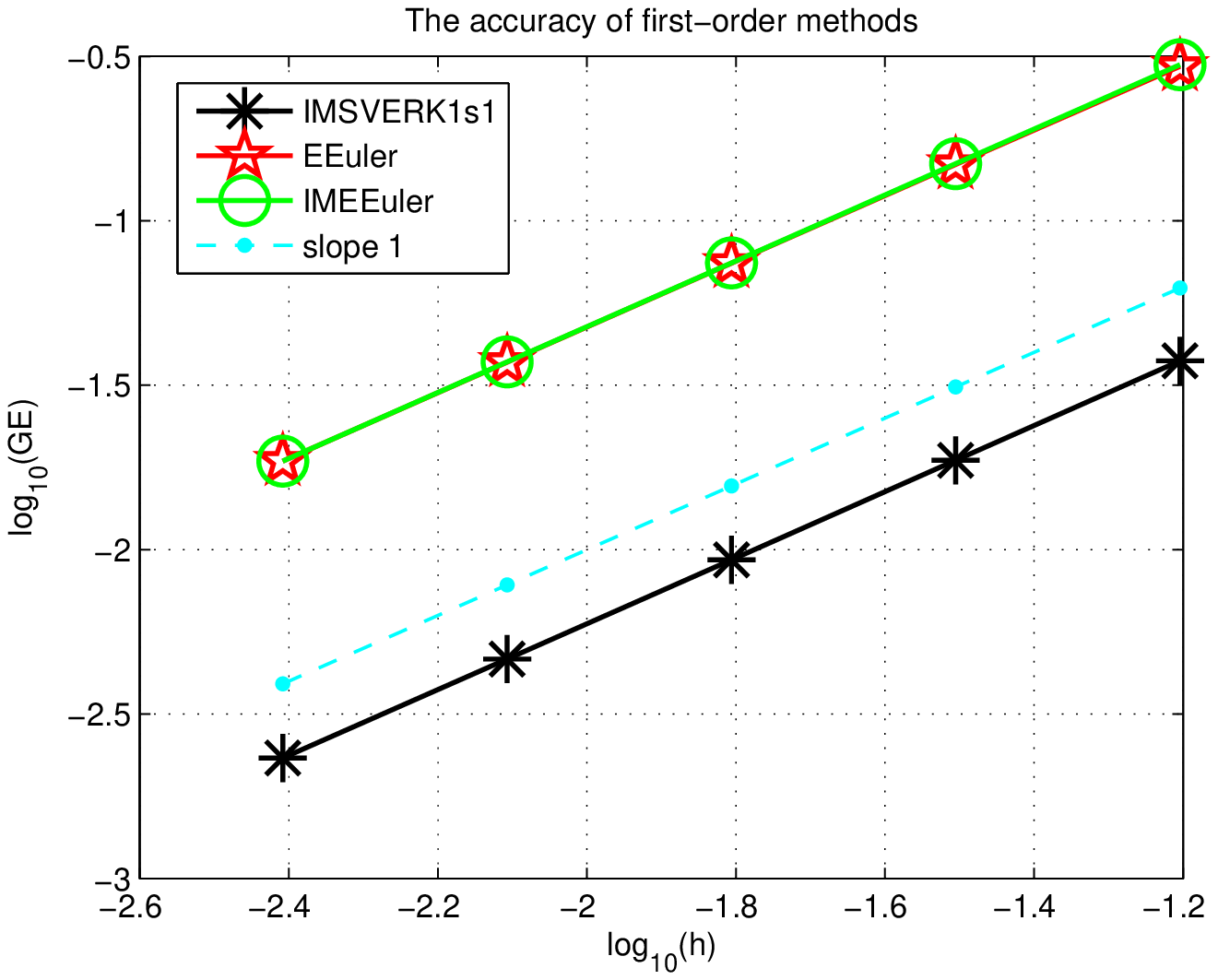}}
  \subfigure[]{\includegraphics[width=6cm,height=5.4cm]{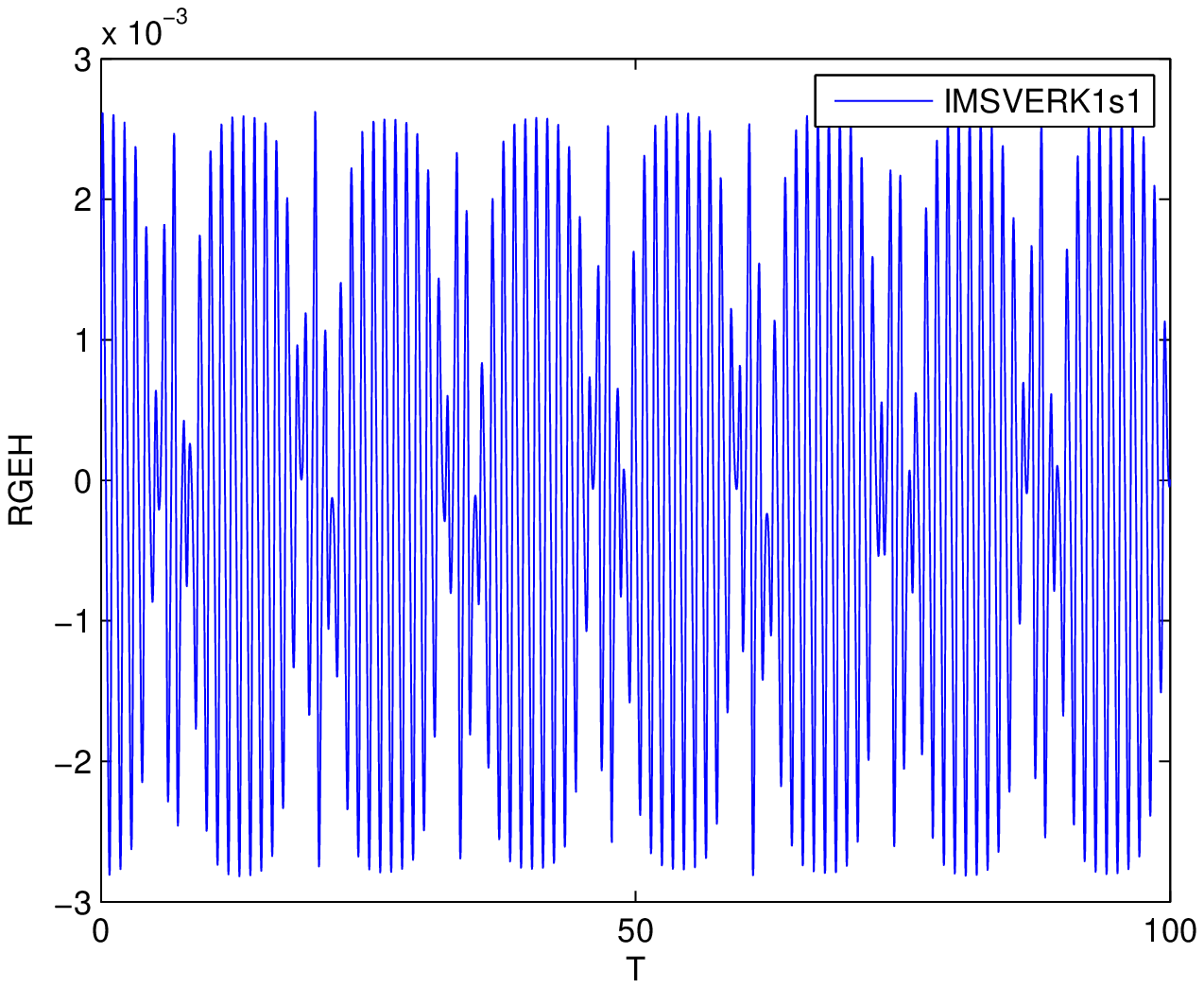}}
\end{tabular}
\caption{Results for  Problem \ref{Sine-Gordon}. {\bf {(a)}}: The $\log$-$\log$ plots of global
errors (GE) against $h$. {\bf {(b)}}: the energy preservation for method IMSVERK1s1.}\label{SGfirst}
\end{figure}

\begin{figure}[!htb]
\centering
\begin{tabular}[c]{cccc}%
  \subfigure[]{\includegraphics[width=6cm,height=5.4cm]{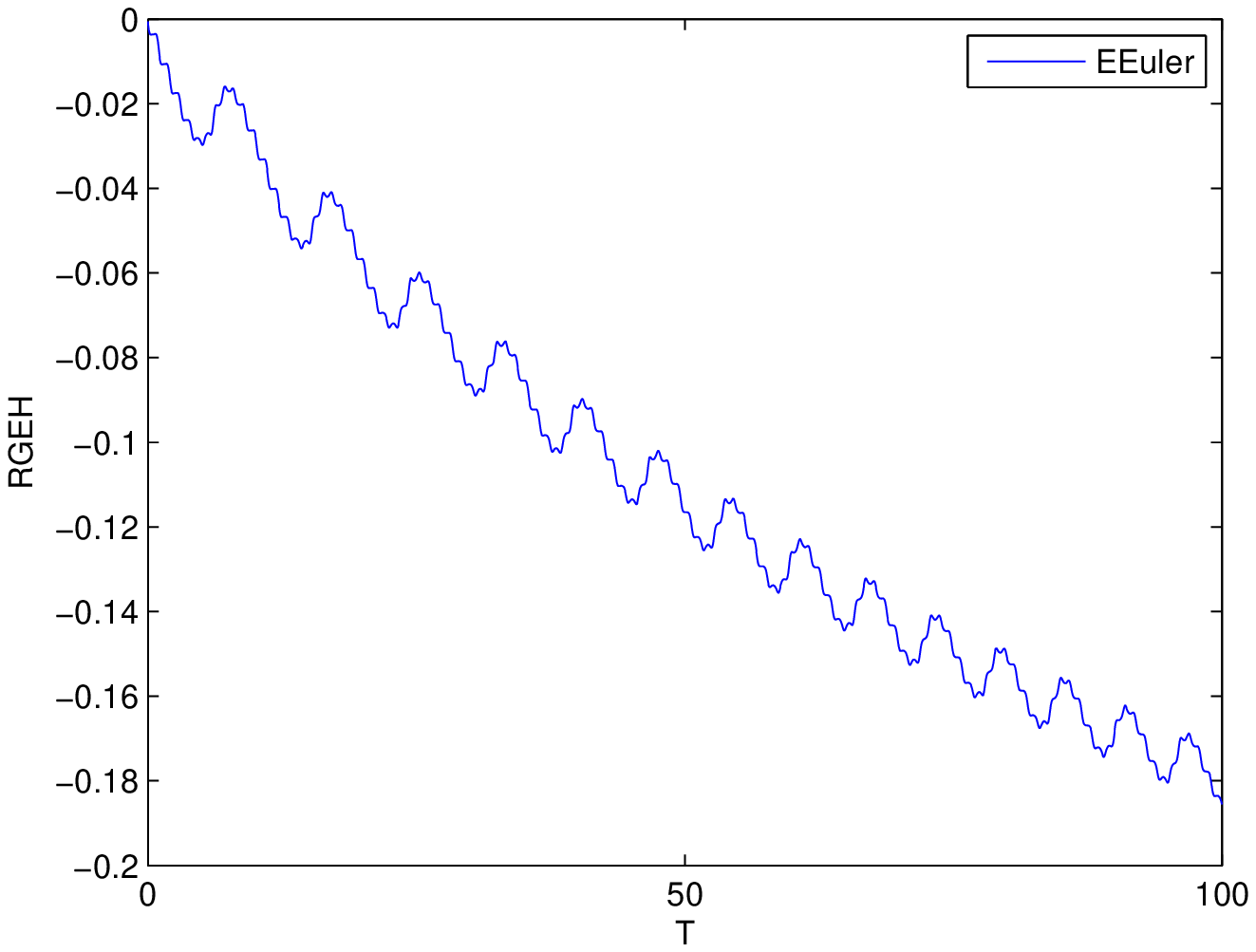}}
  \subfigure[]{\includegraphics[width=6cm,height=5.4cm]{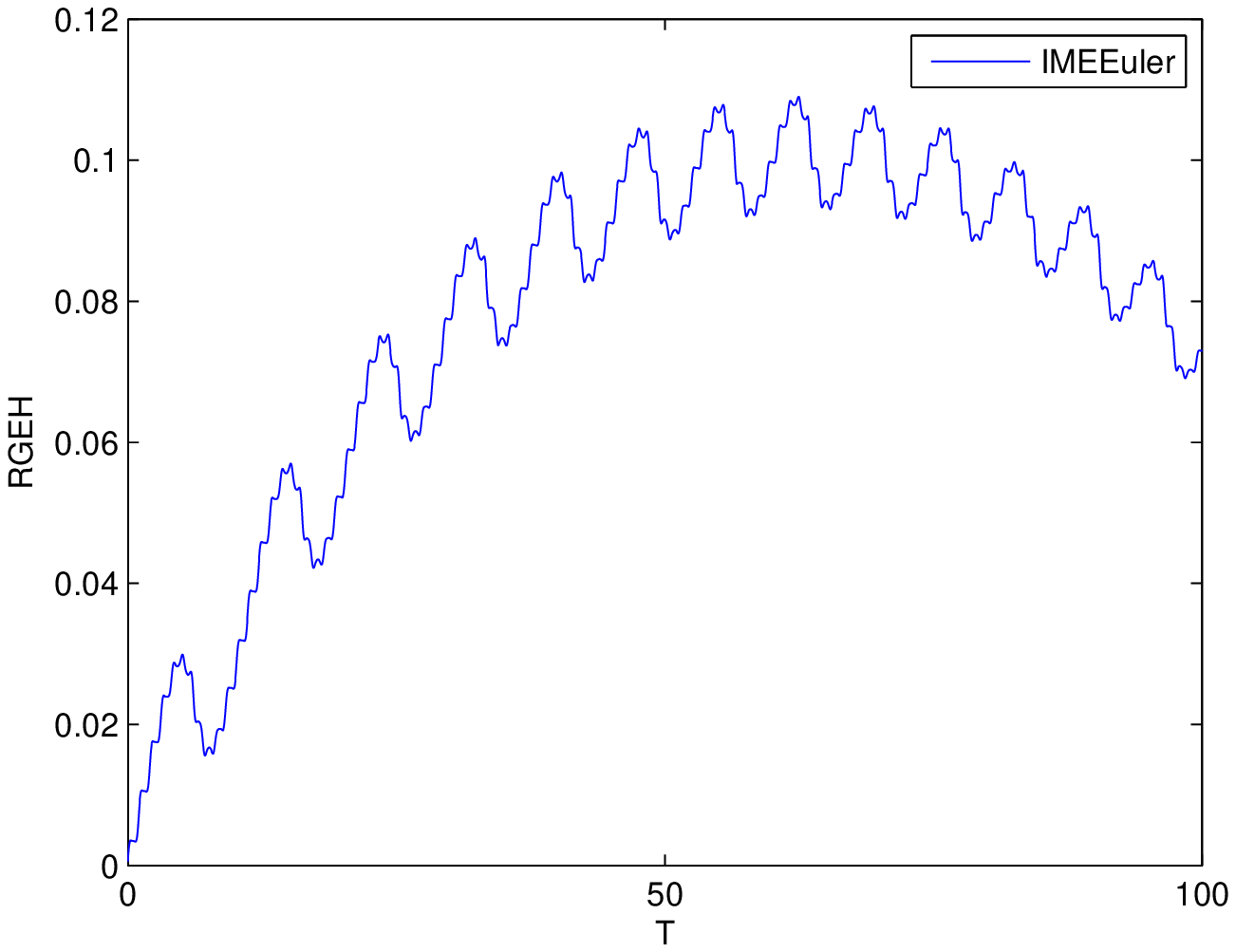}}
\end{tabular}
\caption{Results for  Problem \ref{Sine-Gordon}. : the energy preservation for methods EEuler {\bf {(a)}} and IMEEuler {\bf {(b)}}.}\label{SGenergy}
\end{figure}

\begin{figure}[!htb]
\centering
\begin{tabular}[c]{cccc}%
  \subfigure[]{\includegraphics[width=6cm,height=5.4cm]{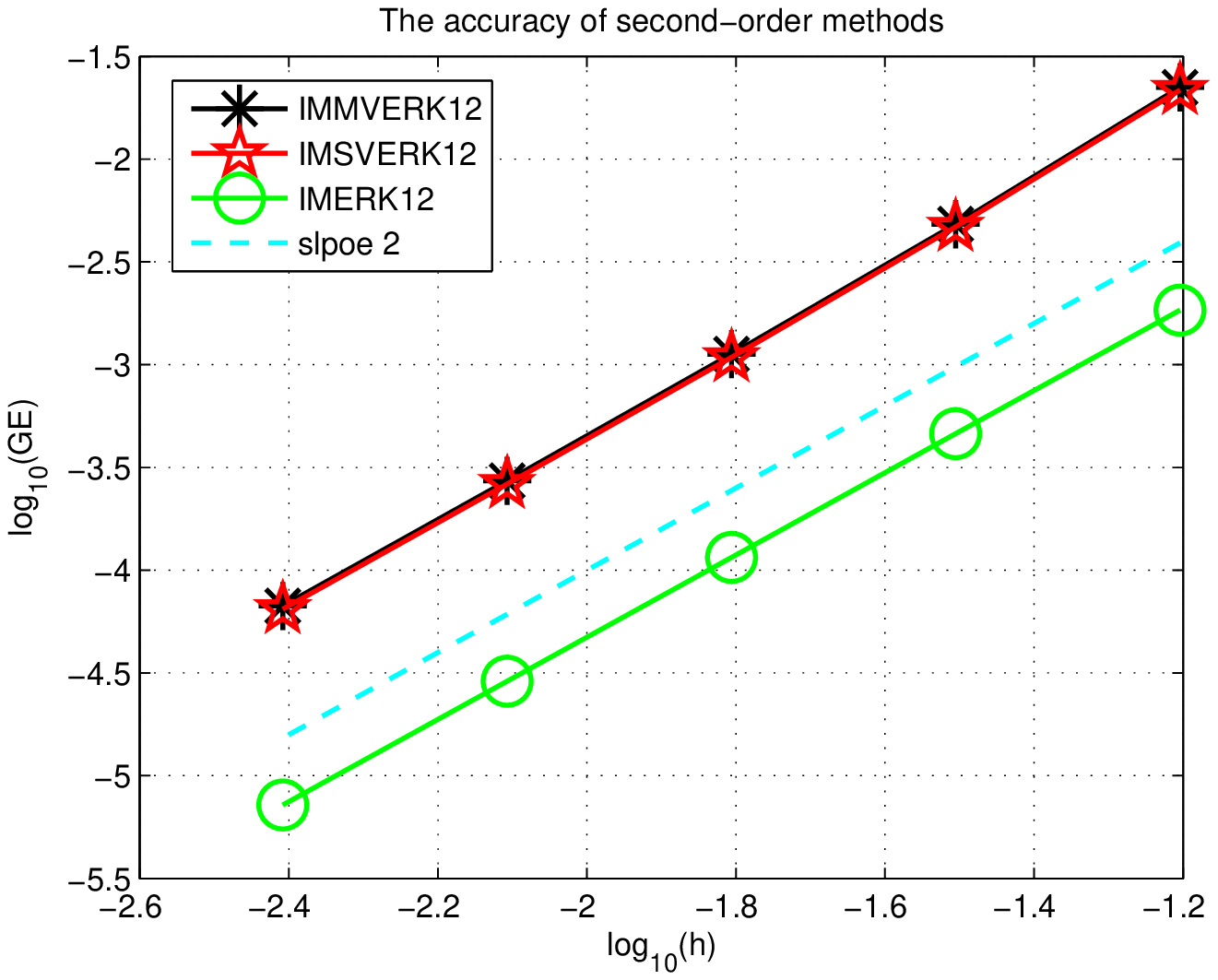}}
  \subfigure[]{\includegraphics[width=6cm,height=5.4cm]{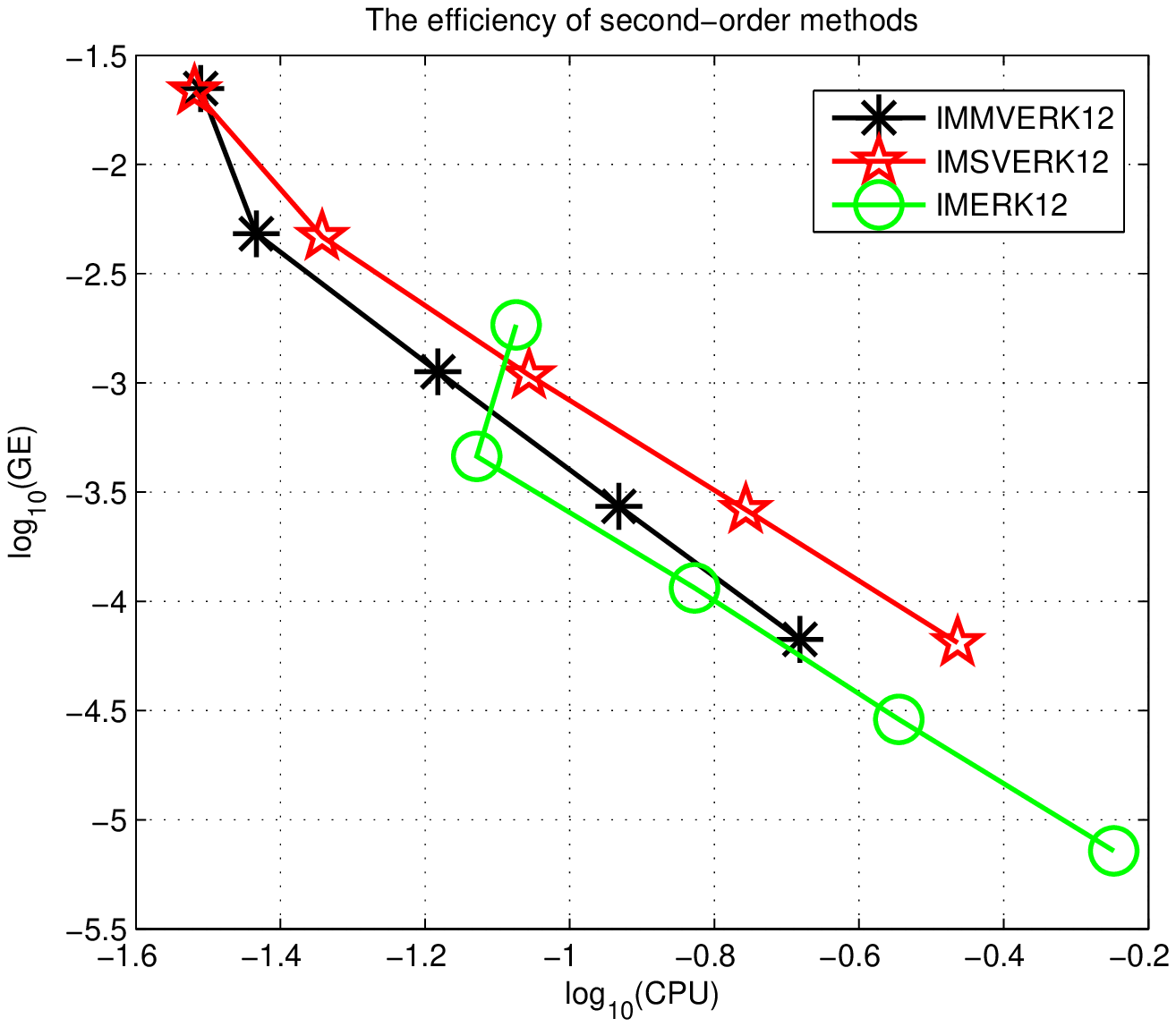}}
\end{tabular}
\caption{Results for  Problem \ref{Sine-Gordon}. {\bf {(a)}}: The $\log$-$\log$ plots of global
errors (GE) against $h$. {\bf {(b)}}: The $\log$-$\log$ plots of global
errors against the CPU time.}\label{SGsecond}
\end{figure}

\begin{figure}[!htb]
\centering
\begin{tabular}[c]{cccc}%
  \subfigure[]{\includegraphics[width=6cm,height=5.4cm]{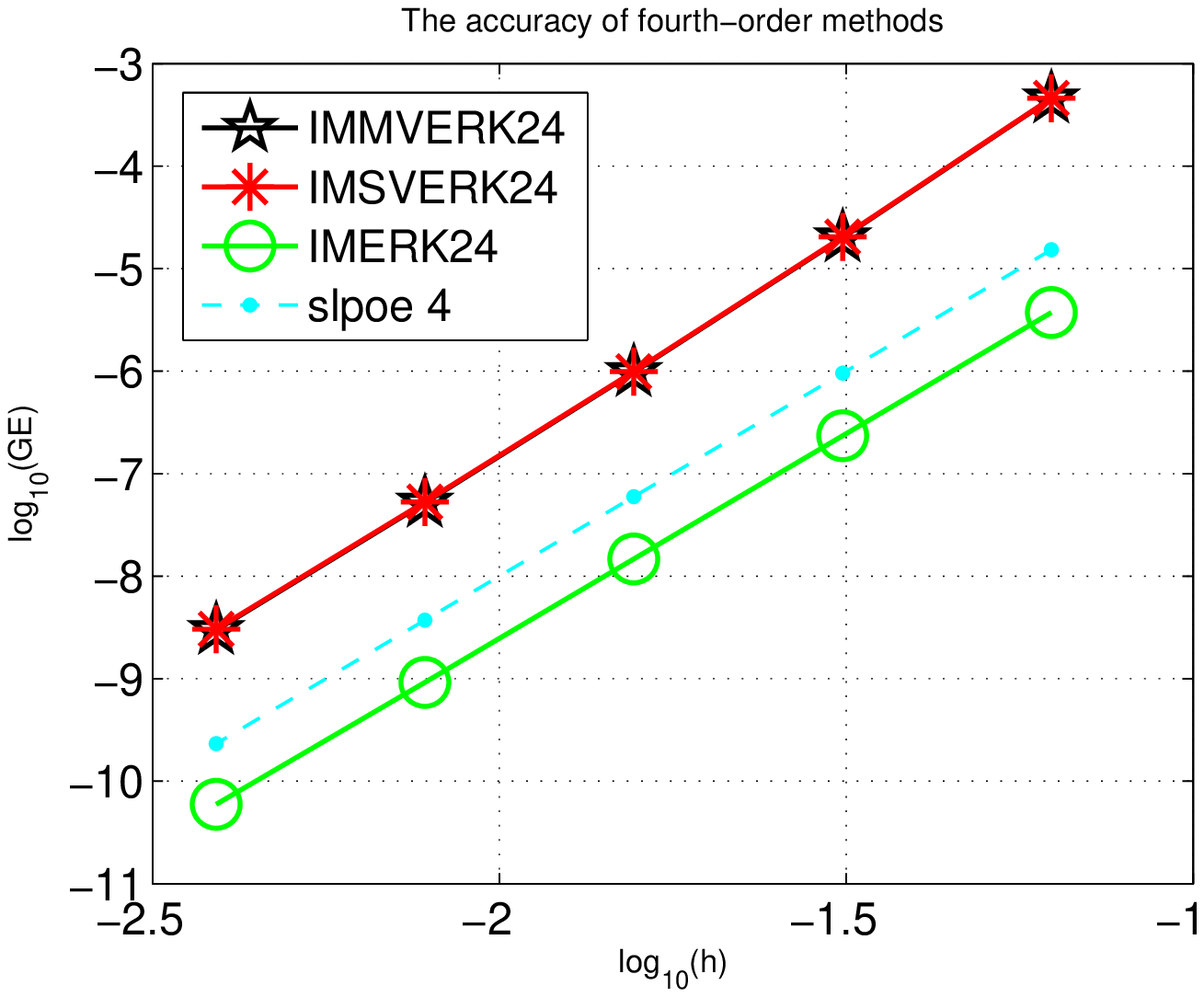}}
  \subfigure[]{\includegraphics[width=6cm,height=5.4cm]{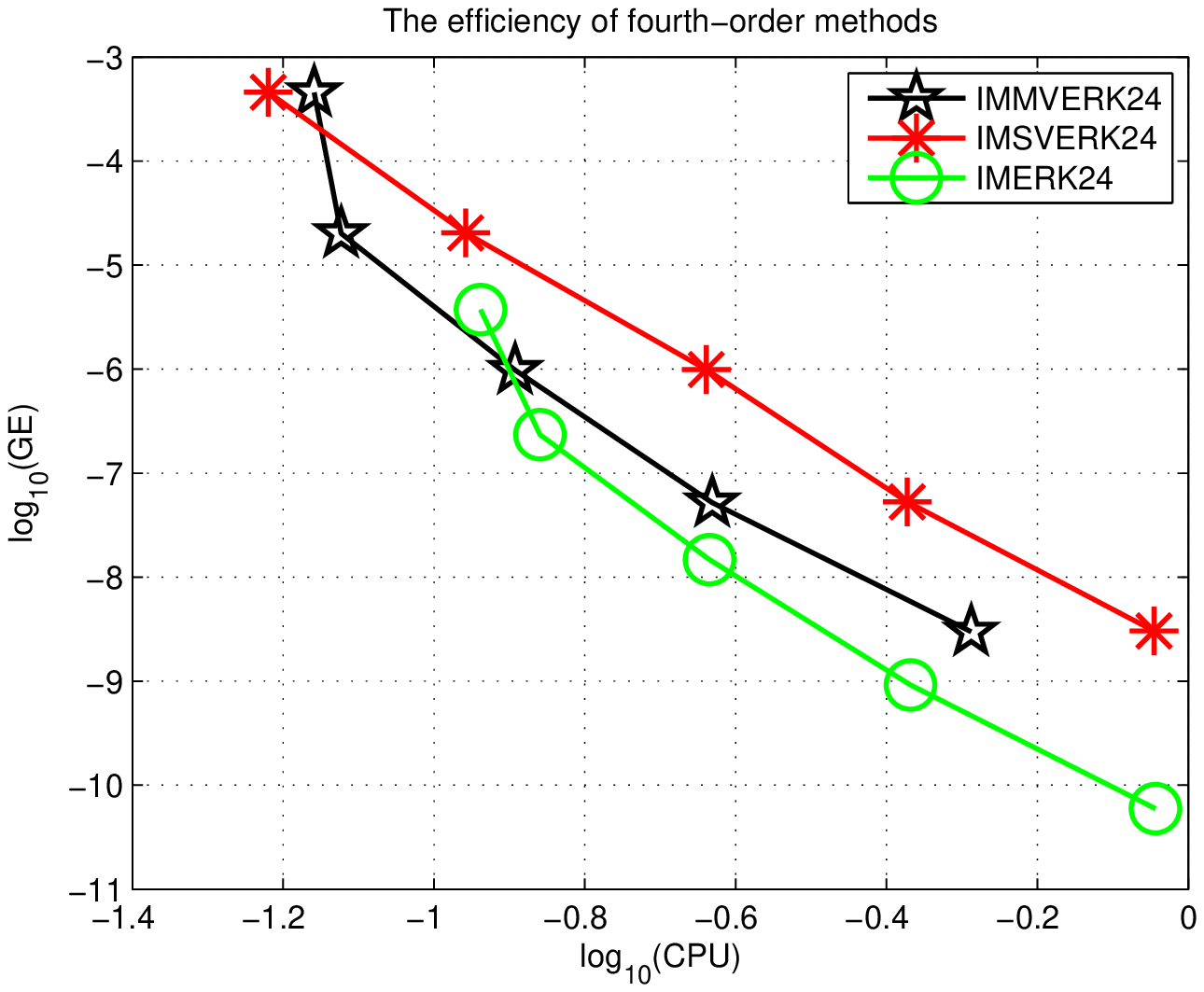}}
\end{tabular}
\caption{Results for  Problem \ref{Sine-Gordon}. {\bf {(a)}}: The $\log$-$\log$ plots of global
errors (GE) against $h$. {\bf {(b)}}: The $\log$-$\log$ plots of global
errors against the CPU time.}\label{SGfourth}
\end{figure}

\section{Conclusion}\label{sec6}

Exponential Runge--Kutta methods  have the unique advantage for solving highly oscillatory problems,  however the implementation of ERK methods generally depends on the evaluations of matrix exponentials.  To reduce computational cost, two new classes of explicit ERK integrators were formulated in \cite{Hu2022,Wang2022}.   Firstly,  we analyzed  the symplectic conditions and verified the existence of the symplectic method, however, the symplectic method only had order one.  Then we designed some practical and effective numerical methods,  and  the order conditions of these ERK methods were exactly identical to standard RK methods. Furthermore,  the linear stability regions for  implicit ERK methods were investigated. Numerical results not only presented the  energy preservation behaviour  for IMSVERK1s1, but also demonstrated the comparable accuracy and efficiency for IMSVERK1s1, IMMVERK12, IMMVERK24, IMSVERK12, IMSVERK24, when applied to the
H\'{e}non-Heiles Model, the Duffing equation and the sine-Gordon equation. It should be noted that our study is based on the classical order conditions, therefore  order reduction phenomenon of these methods can be  observed by several numerical examples.

 Exponential integrators show the better performance than non-exponential integrators. High accuracy and structure preservation for exponential integrators can be further investigated.

\section{Declarations}

\subsection*{Acknowledgements}
The authors are very grateful to the editor and anonymous referees for their invaluable comments and suggestions which helped to improve the manuscript.
\subsection*{Funding}
 This research is partially supported by the National Natural Science Foundation of China (Nos. 12071419).
\subsection*{Conflicts of interest}
The authors declare there is no conflicts of interest regarding the publication of this paper.

\section*{References}
{}

\end{document}